\documentclass{amsart}
\usepackage{amsmath, amssymb, amsthm, color, comment}
\usepackage{tikz, here}
\usetikzlibrary{cd}
\usepackage{hyperref}
\usepackage{cleveref}

\newtheorem{dfn}{Definition}[section]
\newtheorem{thm}[dfn]{Theorem}

\newtheorem{lem}[dfn]{Lemma}
\newtheorem{cor}[dfn]{Corollary}

\newtheorem{rem}[dfn]{Remark}

\newtheorem{conj}[dfn]{Conjecture}
\newtheorem{ques}[dfn]{Question}
\newtheorem{claim}{Claim}[dfn]
\newtheorem{subclaim}{Subclaim}[claim]

\crefname{dfn}{Definition}{Definitions}
\crefname{thm}{Theorem}{Theorems}
\crefname{fact}{Fact}{Facts}
\crefname{lem}{Lemma}{Lemmas}
\crefname{cor}{Corollary}{Corollaries}
\crefname{prop}{Proposition}{Propositions}
\crefname{rem}{Remark}{Remarks}
\crefname{notation}{Notation}{Notations}
\crefname{claim}{Claim}{Claims}
\crefname{subclaim}{Subclaim}{Subclaims}
\crefname{conj}{Conjecture}{Conjectures}
\crefname{ques}{Question}{Questions}

\renewcommand{\subset}{\subseteq}
\newcommand{\power}{\wp}
\newcommand{\uphar}{\mathbin{\upharpoonright}}

\DeclareMathOperator{\dom}{dom}
\DeclareMathOperator{\cf}{cf}
\DeclareMathOperator{\crit}{crit}
\DeclareMathOperator{\lh}{lh}
\DeclareMathOperator{\ord}{Ord}

\title{Chang models over derived models with supercompact measures}

\author{Takehiko Gappo}
\address{Takehiko Gappo, Institut f\"{u}r Diskrete Mathematik und Geometrie, TU Wien, Wiedner Hauptstra{\ss}e 8-10/104, 1040 Wien, Austria.}
\email{takehiko.gappo@tuwien.ac.at}

\author{Sandra M\"uller}
\address{Sandra M\"uller, Institut f\"ur Diskrete Mathematik und Geometrie, TU Wien, Wiedner Hauptstra{\ss}e 8-10/104, 1040 Wien, Austria.}
\email{sandra.mueller@tuwien.ac.at}

\author{Grigor Sargsyan}
\address{Grigor Sargsyan, IMPAN, Antoniego Abrahama 18, 81-825 Sopot, Poland.}
\email{gsargsyan@impan.pl}

\subjclass[2020]{03E60, 03E45, 03E55, 03E35}
\keywords{Determinacy, inner model theory, Chang model, derived models, supercompact measures.}
\date{\today}

\begin{document}

\maketitle

\begin{abstract}
Based on earlier work of the third author, we construct a Chang-type model with supercompact measures extending a derived model of a given hod mouse with a regular cardinal $\delta$ that is both a limit of Woodin cardinals and a limit of ${<}\delta$-strong cardinals.
The existence of such a hod mouse is consistent relative to a Woodin cardinal that is a limit of Woodin cardinals.
We argue that our Chang-type model satisfies $\mathsf{AD}_{\mathbb{R}} + \Theta$ is regular + $\omega_1$ is ${<}\delta_{\infty}$-supercompact for some regular cardinal $\delta_{\infty}>\Theta$.
This complements Woodin's generalized Chang model, which satisfies $\mathsf{AD}_{\mathbb{R}}+\omega_1$ is supercompact, assuming a proper class of Woodin cardinals that are limits of Woodin cardinals.
\end{abstract}

\section{Introduction}

The significance of the Axiom of Determinacy ($\mathsf{AD}$) has been amplified through its interactions with descriptive set theory, forcing theory, and inner model theory.
As $\mathsf{AD}$ is an axiom about sets of reals, typical models of $\mathsf{AD}$ are of the form $V=L(\power(\mathbb{R}))$.
In such models, there is no interesting structure above $\Theta$, which is the least ordinal that is not a surjective image of $\mathbb{R}$.
This paper, however, focuses on determinacy models with rich structure above $\Theta$.
We provide a new canonical construction of determinacy models with supercompact measures witnessing that $\omega_1$ is supercompact up to some cardinal above $\Theta$.

\subsection{Motivation behind higher models of determinacy}

Recent groundbreaking results obtained by forcing over determinacy models motivate the study of determinacy models that are not of the form $V=L(\power(\mathbb{R}))$.
Let $\Theta\mathsf{reg}$ denote the theory $\mathsf{ZF}+\mathsf{AD}_{\mathbb{R}}+$ ``$\Theta$ is regular.''\footnote{Note that $\Theta\mathsf{reg}$ implies $\mathsf{AD}^+$, technical strengthening of $\mathsf{AD}$ introduced by Woodin. $\mathsf{AD}^+$ is defined as the conjunction of $\mathsf{DC}_{\mathbb{R}}$, ordinal determinacy, and $\infty$-Borelness of all sets of reals. See \cite{ADplusbook} for the basic theory of $\mathsf{AD}^+$.}
This theory deserves special attention among numerous determinacy theories in the context of Woodin's $\mathbb{P}_{\mathrm{max}}$ forcing.
Our starting point is the following result.

\begin{thm}[Woodin, \cite{Wo10}]\label{Forcing_MM^++(c)}
    Assume that $V=L(\power(\mathbb{R}))$ and $\Theta\mathsf{reg}$ holds.
    If $G\subset\mathbb{P}_{\mathrm{max}}*\mathrm{Add}(\omega_3, 1)$ is $V$-generic, then $V[G]\models\mathsf{ZFC}+\mathsf{MM}^{++}(\mathfrak{c})$.
\end{thm}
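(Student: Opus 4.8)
The plan is to analyze the two-step iteration $\mathbb{P}_{\mathrm{max}}*\mathrm{Add}(\omega_3,1)$ one factor at a time: from the $\mathbb{P}_{\mathrm{max}}$-extension $V[g]$ one extracts a model of $\mathsf{ZFC}$ in which $\mathfrak{c}=\aleph_2$, $\mathsf{NS}_{\omega_1}$ is saturated, and $\langle H(\omega_2),\mathsf{NS}_{\omega_1},\in\rangle$ is ``$\mathbb{P}_{\mathrm{max}}$-correct'' over $L(\power(\mathbb{R}))$, and then the $({<}\omega_3)$-closed second factor is used only to tidy the continuum function above $\mathfrak{c}$ without disturbing this structure.

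First I would recall the standard structural facts about $\mathbb{P}_{\mathrm{max}}$ over a model of $\mathsf{AD}^+$ (which $\Theta\mathsf{reg}$ implies): it is $\sigma$-closed, hence adds no reals and preserves $\omega_1$, and it is weakly homogeneous, so $\mathrm{Th}(V[g])$ does not depend on the generic $g$. The only delicate point in $V[g]\models\mathsf{ZFC}$ is the Axiom of Choice, which holds because the generic set $A_g\subseteq\omega_1$ assembled from the conditions in $g$ codes, together with the reals, a wellordering of $\mathbb{R}$ in ordertype $\omega_2$; in particular $\mathfrak{c}^{V[g]}=\aleph_2$ and $2^{\aleph_1}=\aleph_2$ there. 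Then, invoking $\Theta\mathsf{reg}$ rather than bare $\mathsf{AD}^+$, one obtains the strong conclusions that $\mathsf{NS}_{\omega_1}^{V[g]}$ is saturated, indeed presaturated (so the generic ultrapower it induces is sufficiently closed, in particular under $\omega_1$-sequences), that $P(\omega_1)^{V[g]}$ is exactly the union of the images of the conditions in $g$, and that every $\Pi_2$ sentence about $(H(\omega_2),\mathsf{NS}_{\omega_1})$ forceable by a $\sigma$-closed, stationary-set-preserving poset of the relevant kind already holds in $V[g]$.

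Next I would deal with the second factor. In $V[g]$ the poset $\mathrm{Add}(\omega_3,1)$ is $({<}\omega_3)$-closed, hence adds no new subset of $\omega_1$ or $\omega_2$, collapses none of $\omega_1,\omega_2,\omega_3$, and preserves stationary subsets of $\omega_1$; consequently it leaves $\mathfrak{c}=\aleph_2$, the saturation of $\mathsf{NS}_{\omega_1}$, and the whole correct structure $(H(\omega_2),\mathsf{NS}_{\omega_1})$ untouched. What it does is force $2^{\aleph_2}=\aleph_3$, collapsing the $V[g]$-value of $2^{\aleph_2}$ down to $\aleph_3$, so that $|H(\omega_3)|=\aleph_3$ and $H(\omega_3)$ carries a wellordering of ordertype $\omega_3$. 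This is what lubricates the verification of $\mathsf{MM}^{++}(\mathfrak{c})$ in $V[g][h]$ (with $h$ being $\mathrm{Add}(\omega_3,1)$-generic over $V[g]$): an instance of that axiom is a poset $P$ with $|P|\le\mathfrak{c}$ together with sequences $\langle D_\alpha:\alpha<\omega_1\rangle$ of dense subsets and $\langle\dot S_\alpha:\alpha<\omega_1\rangle$ of $P$-names for stationary subsets of $\omega_1$, all lying in $H(\omega_3)$, and one must produce a filter $F\subseteq P$ meeting every $D_\alpha$ such that each $S_\alpha^F=\{\xi:\exists p\in F\ p\Vdash\check\xi\in\dot S_\alpha\}$ is stationary in $V[g][h]$.

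Finally comes the heart of the matter, which is a \emph{reflect--solve--iterate} argument for this last statement. Using the wellordering of $H(\omega_3)$ one finds a condition $q=(\langle M,\mathcal I\rangle,a)\in g$ carrying a countable elementary picture of $(P,\langle D_\alpha\rangle,\langle\dot S_\alpha\rangle)$ inside the $\omega_1$-length iterate of $M$ determined by $g$; inside the countable iterable model the now-countable instance of the axiom is solved using that model's saturated ideal $\mathcal I$, whose induced generic elementary embedding produces the required semigeneric filter for the countable poset; one then iterates the model out to length $\omega_1$ into $V[g][h]$ and pushes the filter forward. The crucial fact is that such length-$\omega_1$ iterations by (images of) $\mathcal I$ are correct about stationarity: if $j\colon M\to M^*$ is such an iteration and $S\in M$ is $\mathcal I$-positive, then $j(S)$ is genuinely stationary in $V[g][h]$, since the critical points of the iteration sweep out a club landing in $j(S)$; hence ``$S_\alpha^F$ is $\mathcal I^*$-positive in $M^*$'' upgrades to ``$S_\alpha^F$ is stationary in $V[g][h]$.'' The main obstacle --- and precisely the point where $\Theta\mathsf{reg}$ is needed rather than mere $\mathsf{AD}^+$ --- is the extra ``${}^{++}$'' demand: to process all $\aleph_1$ of the names $\dot S_\alpha$ together with the $\aleph_1$ stationary targets simultaneously and coherently along a construction of length $\omega_1$, and to keep stationarity correct throughout, one needs the generic ultrapowers and iterations in play to be closed under $\omega_1$-sequences, equivalently the presaturation of $\mathsf{NS}_{\omega_1}$ in $V[g]$ together with the enhanced iterability of $\mathbb{P}_{\mathrm{max}}$-conditions that ``$\Theta$ is regular'' secures. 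The remainder is the now-standard $\mathbb{P}_{\mathrm{max}}$ bookkeeping.
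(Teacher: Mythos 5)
The paper does not prove this statement at all: it is quoted verbatim from Woodin's book \cite{Wo10} as background motivation, so there is no proof in the paper to compare yours against. Judged on its own terms as a reconstruction of Woodin's argument, your proposal has a genuine gap at the very first step. You claim that $V[g]\models\mathsf{AC}$ already after the $\mathbb{P}_{\mathrm{max}}$ factor, because $A_g$ codes a wellordering of $\mathbb{R}$, and that $\mathrm{Add}(\omega_3,1)$ merely ``tidies the continuum function above $\mathfrak{c}$.'' This is exactly backwards. The $A_g$ mechanism wellorders $\power(\omega_1)$ (hence $\mathbb{R}$), which suffices for $\mathsf{AC}$ when the ground model is $L(\mathbb{R})$, since there everything is definable from a real, an ordinal, and the generic. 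But here the ground model is $L(\power(\mathbb{R}))$ with $\power(\mathbb{R})$ properly larger than $\power(\mathbb{R})^{L(\mathbb{R})}$, and the $\mathbb{P}_{\mathrm{max}}$ extension does \emph{not} wellorder $\power(\mathbb{R})$; the Axiom of Choice fails in $V[g]$. The whole reason the second factor $\mathrm{Add}(\omega_3,1)$ appears in the theorem (see the discussion in \cite{PmaxSquare}) is that, since $\Theta^{V}=\omega_3^{V[g]}$, a generic subset of $\omega_3$ codes a wellordering of $\power(\mathbb{R})$ (equivalently of $H(\omega_3)$) and thereby produces a model of $\mathsf{ZFC}$. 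Your account never establishes $\mathsf{AC}$ in the final model, and the statements you make about $2^{\aleph_1}$ and $2^{\aleph_2}$ in $V[g]$ are not even well posed there.

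The core ``reflect--solve--iterate'' paragraph also does not fit the shape of the problem. An instance of $\mathsf{MM}^{++}(\mathfrak{c})$ is a poset of size $\mathfrak{c}=\aleph_2$ together with $\aleph_1$ dense sets and names; it lives in $H(\omega_3)$, not in $H(\omega_2)$, so it is not a $\Pi_2$ fact about $\langle H(\omega_2),\mathsf{NS}_{\omega_1},\in\rangle$ that can be captured by a single countable $\mathbb{P}_{\mathrm{max}}$ condition and recovered by a length-$\omega_1$ generic iteration: such an iteration produces objects of size $\aleph_1$, not a filter on a poset of size $\aleph_2$ meeting the prescribed dense sets. Woodin's actual argument uses $\mathsf{AD}_{\mathbb{R}}$ together with the regularity of $\Theta$ to reflect the (code of the) instance into a proper Wadge-initial segment $\Gamma$ of $\power(\mathbb{R})$ determined by the Solovay sequence, and runs the $\mathbb{P}_{\mathrm{max}}$ analysis relative to $\Gamma$; your sketch gestures at ``presaturation'' and ``enhanced iterability'' but never identifies this reflection mechanism, which is where the hypothesis $\Theta\mathsf{reg}$ (as opposed to $\mathsf{AD}^+$) actually enters.
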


Here, $\mathsf{MM}^{++}(\mathfrak{c})$ denotes $\text{Martin's Maximum}^{++}$ for posets of size at most continuum.
We note that \cref{Forcing_MM^++(c)} drastically reduces an upper bound of the consistency strength of $\mathsf{MM}^{++}(\mathfrak{c})$.
Any known way to force $\mathsf{MM}^{++}(\mathfrak{c})$ over $\mathsf{ZFC}$ models requires a supercompact cardinal, while the consistency strength of $\Theta\mathsf{reg}$ is below a Woodin limit of Woodin cardinals (\cite{HMMSC}).

To force more fragments of $\mathsf{MM}^{++}$ via $\mathbb{P}_{\mathrm{max}}$ forcing, one needs to find more complicated determinacy models as potential ground models that may not satisfy $V=L(\power(\mathbb{R}))$.
For example, Blue, Larson, and the third author extended the result of \cite{PmaxSquare} to obtain the following.

\begin{thm}[Blue--Larson--Sargsyan, \cite{Nairian}]\label{BlueLarsonSargsyan}
    Let $3\leq n<\omega$.
    Then it is consistent relative to a Woodin limit of Woodin cardinals that there is a transitive model $M$ of $\Theta\mathsf{reg}$ such that if $G\subset(\mathbb{P}_{\mathrm{max}}*\mathrm{Add}(\omega_3, 1)* \cdot *\mathrm{Add}(\omega_n, 1))^M$ is $M$-generic, then
    \[
    M[G]\models\mathsf{MM}^{++}(\mathfrak{c})+\forall i\in [2, n]\,\neg\square(\omega_i).
    \]
\end{thm}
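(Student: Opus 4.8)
The plan is to split the argument into an inner-model-theoretic construction of the ground model $M$ and a forcing analysis of $M[G]$, following and iterating the strategy of \cite{PmaxSquare}, which handles the case $n=3$.

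\emph{Building $M$.} Starting from a Woodin limit of Woodin cardinals, I would run the core model induction in the third author's hod-mouse framework. By \cite{HMMSC} this hypothesis already yields a hod pair $(\mathcal{P},\Sigma)$ whose derived model satisfies $\Theta\mathsf{reg}$; the new point is to push the construction until the resulting model $M$ --- obtained as a Chang-type (``Nairian'') model over the derived model of $\mathcal{P}$, hence not of the form $L(\power(\mathbb{R}))$ --- carries in addition $n-2$ cardinals $\Theta<\lambda_1<\dots<\lambda_{n-2}$, where $\lambda_k$ is the $k$th successor cardinal of $\Theta$ in $M$, each endowed with a supercompactness-type measure that in particular makes it weakly compact --- indeed more, as needed for the transfer below and for compatibility with $\mathsf{MM}^{++}(\mathfrak{c})$ --- in an inner model of $M$. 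The slack in \cite{HMMSC} is what should make this possible without raising the consistency strength above a Woodin limit of Woodin cardinals: the determinacy model below $\Theta$ is already rich enough to reflect the requisite structure onto the first $n-2$ successors of $\Theta$. Carrying this out with all $n-2$ layers, uniformly in $n$ and in a way robust under $\mathbb{P}_{\mathrm{max}}$, is the technical heart of the theorem and the step I expect to be the main obstacle; the remainder is forcing bookkeeping.

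\emph{Forcing over $M$.} I would then force with $\mathbb{P}_{\mathrm{max}}*\mathrm{Add}(\omega_3,1)*\dots*\mathrm{Add}(\omega_n,1)$ and analyze the extension level by level. After $\mathbb{P}_{\mathrm{max}}$, by Woodin's analysis (\cref{Forcing_MM^++(c)} and its extension to determinacy models with structure above $\Theta$) one has $2^{\omega}=\omega_2=\Theta^M$ and the canonical theory of $H(\omega_2)$; and since $\mathbb{P}_{\mathrm{max}}$ adds no reals and preserves the cardinals of $M$ above $\Theta^M$, each $\lambda_k$ has now become $\omega_{k+2}$ with its inner-model largeness intact. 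Each subsequent factor $\mathrm{Add}(\omega_j,1)$ plays, at level $j$, the role that $\mathrm{Add}(\omega_3,1)$ plays in \cref{Forcing_MM^++(c)} --- freezing $2^{\omega_{j-1}}=\omega_j$ --- which the enriched model requires at every $j\le n$ in order for $\mathsf{MM}^{++}(\mathfrak{c})$ to hold; and being ${<}\omega_j$-closed it adds no new subsets of $\omega_{j-1}$, so it disturbs neither the instances of $\mathsf{MM}^{++}(\mathfrak{c})$ nor the failures of square secured at levels below $j$. It then remains to check that $\square(\omega_i)$ fails in $M[G]$ for each $i\in[2,n]$. For $i\ge3$ this is because $\omega_i=\lambda_{i-2}$ is still weakly compact in an inner model of $M[G]$, which by a Jensen/Todorcevic-style covering argument (as in \cite{PmaxSquare}) rules out a $\square(\omega_i)$-sequence; and for $i=2$ one invokes the standard $\mathbb{P}_{\mathrm{max}}$ fact that $\Theta^M$, weakly compact --- indeed Woodin --- in $\mathrm{HOD}^M$, retains enough of that largeness after $\mathbb{P}_{\mathrm{max}}$ to refute $\square(\omega_2)$. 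Assembling the levels gives $M[G]\models\mathsf{MM}^{++}(\mathfrak{c})+\forall i\in[2,n]\,\neg\square(\omega_i)$, all relative to a Woodin limit of Woodin cardinals.
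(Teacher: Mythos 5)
First, note that \cref{BlueLarsonSargsyan} is quoted from Blue--Larson--Sargsyan \cite{Nairian}; the present paper contains no proof of it, so there is no in-paper argument to compare yours against. Judged on its own terms, your outline has the right overall architecture --- build a Chang-type determinacy model carrying distinguished cardinals above $\Theta$, force with the iterated collapse, and kill the squares level by level --- and this is broadly the shape of the argument in \cite{Nairian}, of which \cite{PmaxSquare} is the $n=3$ case. But two things keep it from being a proof.

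The first is that the entire first half is deferred: you yourself flag the construction of $M$ with the required structure on $n-2$ cardinals above $\Theta$ as ``the technical heart'' and ``the main obstacle,'' which is to say you have deferred the content of the theorem. Appealing to ``the slack in \cite{HMMSC}'' does not supply it; \cite{HMMSC} concerns the consistency strength of $\Theta\mathsf{reg}$ and says nothing about producing structure above $\Theta$. The second, more concrete, gap is the mechanism you give for $\neg\square(\omega_i)$ when $i\geq 3$: weak compactness of $\lambda_{i-2}$ in an inner model of $M[G]$ does not refute $\square(\omega_i)$ in $M[G]$, because a $\square(\omega_i)$-sequence of $M[G]$ need not belong to that inner model, and weak compactness only threads the square sequences that the weakly compact model can see. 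What the Nairian models actually provide --- and what the $\omega_3$ case in \cite{PmaxSquare} already uses --- is supercompactness-type measures for $\omega_1$ on $\power_{\omega_1}(\lambda_k)$ inside the determinacy model $M$; these survive into $M[G]$ as countably complete fine measures and refute $\square(\omega_i)$ by the classical Solovay--Todorcevic reflection argument, with no detour through inner-model weak compactness. That is exactly the kind of structure the present paper's $\mathsf{CDM}^+$ is designed to exhibit (\cref{spct_meas_in_CDM_plus}), and substituting it for your ``weakly compact in an inner model'' step is what would bring your outline in line with the actual strategy of \cite{Nairian}.
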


This result has a striking inner model theoretic corollary: by \cite{JSSS09}, the iterability conjecture for $K^c$ is false in $M[G]$ in the setting of \cref{BlueLarsonSargsyan}.
The conjecture was expected to be a consequence of $\mathsf{ZFC}$ because if so, the construction of a canonical inner model with large cardinals would have been accomplished at least up to the level of a subcompact cardinal.

We also want to briefly mention that some intuition from the core model induction technique motivates us to consider determinacy models that are not of the form $V=L(\power(\mathbb{R}))$.
Core model induction is the inner model theoretic technique used to obtain models of strong determinacy axioms of the form $V=L(\power(\mathbb{R}))$ from various natural assumptions such as the Proper Forcing Axiom ($\mathsf{PFA}$).
The best result on the lower bound of the consistency strength of $\mathsf{PFA}$ is obtained by this technique:
Trang and the third author showed in \cite{LSAbook} that $\mathsf{PFA}$ implies that there is a model of the Largest Suslin Axiom ($\mathsf{LSA}$), which is much stronger than $\Theta\mathsf{reg}$ in terms of consistency strength but still weaker than a Woodin limit of Woodin cardinals.
Although we expect that their result will be extended to reach, at least, the level of a Woodin limit of Woodin cardinals, in \cite{SaTr}, Trang and the third author showed that the current framework of the core model induction will never reach that level.
They suspect that future core model induction arguments will have to produce determinacy models that are not of the form $V=L(\power(\mathbb{R}))$ to overcome this difficulty.
See the introduction of \cite{SaTr} for further discussion.

\subsection{Beyond Woodin's derived model theorem}

There is a canonical construction of models of $\mathsf{AD}^+ + V=L(\power(\mathbb{R}))$ from large cardinals due to Woodin.
Let $\delta$ be a limit of Woodin cardinals of $V$ and let $g\subset\mathrm{Col}(\omega, {<}\delta)$.
Then the \emph{derived model at $\delta$} (computed in $V[g]$), denoted by $\mathsf{DM}$, is defined as follows:
Let $\mathbb{R}^*_g = \bigcup_{\alpha<\delta} \mathbb{R}^{V[g\uphar\alpha]}$, where $g\uphar\alpha:=g\cap\mathrm{Col}(\omega, {<}\alpha)$.
Let
\[
\Gamma^*_g = \{A^*_g\subset\mathbb{R}^*_g\mid\exists\alpha<\delta( A\subset\mathbb{R}^{V[g\uphar\alpha]}\land V[g\uphar\alpha]\models\text{$A$ is ${<}\delta$-universally Baire})\}.
\]
Here, we write $A^*_g = \bigcup_{\beta\in(\alpha, \delta)} A^{g\uphar\beta}$, where $A^{g\uphar\beta}$ is the canonical extension of $A$ in $V[g\uphar\beta]$ via its ${<}\delta$-universally Baire representation.
Then $\mathsf{DM}=L(\Gamma^*_g, \mathbb{R}^*_g)$.\footnote{$\mathsf{DM}$ depends on the choice of generic $g$, but its theory does not by the homogeneity of $\mathrm{Col}(\omega, {<}\delta)$. So we sometimes say that ``the'' derived model satisfies a statement.}
Woodin showed that $\mathsf{DM}\models\mathsf{AD}^+$ always holds and that the $\mathsf{DM}$ can satisfy stronger forms of determinacy:
\begin{itemize}
    \item If $\delta$ is also a limit of ${<}\delta$-strong cardinals, then $\mathsf{DM}\models\mathsf{AD}_{\mathbb{R}}$.
    \item If there is a cardinal $\kappa<\delta$ that is $\delta$-supercompact, then $\mathsf{DM}\models\Theta\mathsf{reg}$.\footnote{The proof of this result is not written up anywhere to the best of our knowledge.}
\end{itemize}
For basic properties of derived models, see \cite{St09}.

There are several examples generalizing the derived model construction to produce models that are not of the form $V=L(\power(\mathbb{R}))$:
Woodin showed that Solovay's model $L(\mathbb{R}, \mu)$ of $\mathsf{AD}+$``$\omega_1$ is $\mathbb{R}$-supercompact'' can be realized by a generalized derived model construction (\cite{Tr15}).
This technique was extended by Trang to produce a model $L(\power(\mathbb{R}))[\mu]$ of $\Theta\mathsf{reg}+$``$\omega_1$ is $\power(\mathbb{R})$-supercompact'' in \cite{Tr15MALQ}.
Also, Larson--Sargsyan--Wilson's model of $\mathsf{AD}+$ ``all subsets of reals are universally Baire'' in \cite{LSW} is an example of a generalized derived model. Their model does not even have the form $V=L(A)$ for a set $A$.
Here, we are interested in generalizations of the derived model theorem to different kinds of models: Chang-type models of determinacy in the spirit of the model constructed in \cite{CoveringChang}.
Let $\mathsf{CM}=L({}^{\omega}\!\ord)$ be the Chang model and let $\mathsf{CM}^+ = L({}^{\omega}\!\ord)[\langle\mu_{\alpha}\mid\alpha\in\ord\rangle]$, where $\mu_{\alpha}$ is the club filter on $\power_{\omega_1}({}^{\omega}\alpha)$.

\begin{thm}[Woodin, \cite{Wo21}]\label{det_in_Chang}
    Assume that there is a proper class of Woodin cardinals that are limits of Woodin cardinals. Then
    \begin{enumerate}
        \item $\mathsf{CM}\models\mathsf{AD}^+$, and
        \item $\mathsf{CM}^+\models\mathsf{AD}^+ + \omega_1\text{ is supercompact.}$
    \end{enumerate}
\end{thm}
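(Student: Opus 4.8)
The plan is to reduce both items to Woodin's derived model theorem, supplemented by a Solovay-style analysis of the club filters, everything carried out relative to the symmetric L\'evy collapse.

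For (1), fix a set of reals $A\in\mathsf{CM}$. It is definable over some level $L_{\gamma}({}^{\omega}\!\ord)$ from an $\omega$-sequence of ordinals $s$, say $s\in{}^{\omega}\lambda$. Choose a Woodin cardinal $\delta>\lambda$ that is a limit of Woodin cardinals, let $g\subset\mathrm{Col}(\omega,{<}\delta)$ be $V$-generic, and let $\mathsf{DM}$ denote the derived model at $\delta$ computed in $V[g]$. For $\lambda<\alpha<\delta$, the sequence $s$ is coded by a real $x_s\in\mathbb{R}^{V[g\uphar\alpha]}\subset\mathbb{R}^*_g$; and since every ordinal below $\delta$ is countable in $\mathsf{DM}$, the decoded sequence, and more generally the whole set ${}^{\omega}\beta\cap V$ for each $\beta<\delta$, lies in $\mathsf{DM}$. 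The first step is to show that the coding $s\mapsto x_s$ induces a canonical elementary embedding of the fragment $L({}^{\omega}({<}\delta)\cap V)$ of $\mathsf{CM}$ into $\mathsf{DM}$, so that $\mathsf{DM}$ correctly computes the first-order theory, with ordinal-sequence parameters below $\delta$, of that fragment; here homogeneity of $\mathrm{Col}(\omega,{<}\delta)$ over $V$ guarantees that the relevant facts are independent of $g$. Once this is in place, $\mathsf{DM}\models\mathsf{AD}^+$ gives an $\infty$-Borel code for $A$ and a winning strategy in the game with payoff $A$; both are absolute --- a strategy for an integer game is coded by a real, and $\infty$-Borel evaluation is absolute --- so by homogeneity they may be chosen in $V$, hence in $\mathsf{CM}$. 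Finally, since $\mathsf{CM}=\bigcup_{\delta}L({}^{\omega}({<}\delta)\cap V)$, with $\delta$ ranging over the proper class of Woodin cardinals that are limits of Woodin cardinals, and $\mathbb{R}^{\mathsf{CM}}=\mathbb{R}^V$, these local facts assemble to $\mathsf{CM}\models\mathsf{AD}^+$.

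For (2), one first checks, by running the argument of (1) on sets definable from the $\ord$-definable sequence $\langle\mu_{\alpha}\mid\alpha\in\ord\rangle$, that passing to $\mathsf{CM}^+$ adds no reals and preserves $\mathsf{AD}^+$. The heart of the matter is that each $\mu_{\alpha}$ --- the club filter on $\power_{\omega_1}({}^{\omega}\alpha)$ --- is, in $\mathsf{CM}^+$, a normal fine \emph{ultrafilter}, which witnesses that $\omega_1$ is ${}^{\omega}\alpha$-supercompact there; as $\alpha$ is arbitrary and every set of $\mathsf{CM}^+$ is a surjective image of ${}^{\omega}\alpha$ for some $\alpha$, pushing the $\mu_{\alpha}$ forward yields full supercompactness of $\omega_1$. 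Normality and fineness being automatic, fix $B\subset\power_{\omega_1}({}^{\omega}\alpha)$ in $\mathsf{CM}^+$; we must show that $B$ or its complement contains a club. Writing $B$ as definable from an ordinal-sequence parameter and collapsing that parameter to a real inside a symmetric collapse below a Woodin cardinal that is a limit of Woodin cardinals, $B$ becomes captured by an $\infty$-Borel code in the corresponding derived model. A genericity / stationary-tower argument below a further Woodin cardinal then produces a generic elementary embedding $j\colon V\to M$ with $\crit(j)=\omega_1$ and $j[{}^{\omega}\alpha]\in M$, and the $\infty$-Borel code decides, uniformly and independently of the choice of generic, which of $j[{}^{\omega}\alpha]\in j(B)$ or $j[{}^{\omega}\alpha]\in j(\power_{\omega_1}({}^{\omega}\alpha)\setminus B)$ holds; unwinding, exactly one of $B$, $\power_{\omega_1}({}^{\omega}\alpha)\setminus B$ contains a club.

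I expect the main obstacle to be precisely this last step --- that the club filter is an ultrafilter in $\mathsf{CM}^+$ --- performed \emph{uniformly in $\alpha$}, since only a uniform argument delivers full rather than merely bounded supercompactness. This is where the hypothesis of a proper class of Woodin cardinals that are limits of Woodin cardinals, rather than merely Woodin cardinals, is essential: above each collapse one needs enough room both to form a derived model absorbing the relevant fragment of $\mathsf{CM}^+$ (whose Wadge hierarchy is strictly longer than that of $L(\mathbb{R})$) and to run the tower argument. The delicate technical points are transferring the $\infty$-Borel representation of an arbitrary $\mathsf{CM}^+$-subset of $\power_{\omega_1}({}^{\omega}\alpha)$ out of the derived model and into the generic ultrapower, and arranging the tower generic so that $j[{}^{\omega}\alpha]$ is correctly seen by $j(B)$. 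A secondary, more routine, obstacle is the bookkeeping in (1) needed to descend from the local statements about $L({}^{\omega}({<}\delta)\cap V)$, which embeds into $\mathsf{DM}$ with the ``wrong'' reals, to the global conclusion about $\mathsf{CM}$; this is handled by homogeneity of the L\'evy collapse together with the absoluteness of winning strategies and $\infty$-Borel codes.
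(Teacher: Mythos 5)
The paper does not prove \cref{det_in_Chang}: it is quoted as a black box from Woodin's unpublished manuscript \cite{Wo21}, so there is no in-paper argument to compare yours against line by line. Judged on its own terms, your sketch has the right skeleton (reduce to derived models at Woodin limits of Woodin cardinals, then prove a club-filter dichotomy for $\mathsf{CM}^+$-sets), but the two steps you flag as ``to be shown'' are in fact the entire mathematical content of the theorem, and one of them is asserted in a form that is not correct as stated.

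For (1), the claim that the coding $s\mapsto x_s$ ``induces a canonical elementary embedding of $L({}^{\omega}({<}\delta)\cap V)$ into $\mathsf{DM}$'' does not survive scrutiny. First, ${}^{\omega}\beta\cap V$ is not obviously an element of $\mathsf{DM}=L(\Gamma^*_g,\mathbb{R}^*_g)$: every individual sequence is coded by a real in $\mathbb{R}^*_g$, but the set of reals coding $V$-sequences (as opposed to $V[g\uphar\alpha]$-sequences) is not known to lie in $\Gamma^*_g$, and ``$x$ codes a sequence in $V$'' is not definable in $\mathsf{DM}$ from the displayed parameters. Second, the fragment has reals $\mathbb{R}^V$ while $\mathsf{DM}$ has reals $\mathbb{R}^*_g\supsetneq\mathbb{R}^V$, so no inclusion-style map can be elementary; and even granting that $A$ is captured by some $A^*\in\Gamma^*_g$ with $A^*\cap\mathbb{R}^V=A$, determinacy of $A^*$ in $\mathsf{DM}$ does not hand you a winning strategy for $A$ over $\mathbb{R}^V$ without a universally Baire or homogeneously Suslin representation of $A$ in $V$ itself. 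Arranging exactly this kind of capturing is why the present paper builds $\mathsf{CDM}$ inside a hod mouse via genericity iterations and the internal direct limit system, rather than simply observing that the Chang model sits inside a derived model. For (2), the dichotomy ``$B$ or its complement contains a club'' is the theorem, not a consequence of having an $\infty$-Borel code: you must exhibit a club of $\sigma\in\power_{\omega_1}({}^{\omega}\alpha)$ any two members of which are interchangeable with respect to $B$, and the assertion that the generic embedding ``decides, uniformly and independently of the choice of generic,'' which side $j[{}^{\omega}\alpha]$ lands on is essentially the generic absoluteness of the theory of $\mathsf{CM}^+$ --- which in \cite{Wo21} is proved together with, not prior to, determinacy, so as written your argument risks circularity. (Compare \cref{finding_clubs} and \cref{club_dichotomy} in this paper, where the analogous club of points $\mathrm{ran}(\pi^{\mathcal{R},\xi}_{\mathcal{R}^*,\infty})\cap\alpha$ and the invariance of the defining parameters under further iteration are what actually do this work in the hod-mouse setting.)
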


The natural question is if $\mathsf{CM}$ and $\mathsf{CM}^+$ can satisfy stronger forms of determinacy such as $\Theta\mathsf{reg}$.
By Mitchell's result in \cite{Mi17}, $\mathsf{CM}$ cannot satisfy $\mathsf{AD}_{\mathbb{R}}$.
On the other hand, Ikegami and Trang showed in \cite{spct_of_omega_1} that assuming that $\omega_1$ is supercompact, $\mathsf{AD}^+$ is equivalent to $\mathsf{AD}_{\mathbb{R}}$.\footnote{Ikegami--Trang's result can be divided into two parts: (i) Assuming $\mathsf{ZF}+\omega_1$ is supercompact, $\mathsf{DC}$ holds. (ii) Assuming $\mathsf{ZF}+\mathsf{DC}+\omega_1$ is $\wp(\mathbb{R})$-strongly compact, $\mathsf{AD}^+$ is equivalent to $\mathsf{AD}_{\mathbb{R}}$.}
So in the setting of \cref{det_in_Chang}, $\mathsf{CM}^+$ is indeed a model of $\mathsf{AD}_{\mathbb{R}}$.
Moreover, Woodin showed that assuming not only a proper class of a Woodin limit of Woodin cardinals but also determinacy of some definable game of length $\omega_1$, $\mathsf{CM}^+$ satisfies that $\Theta$ is regular.
However, the assumption he used is still unknown to be consistent from large cardinals.
We conjecture that the following generalized derived model theorem holds.

\begin{conj}\label{conj_on_global_ver_of_CDM}
    Suppose that $\delta$ is a Woodin cardinal that is a limit of Woodin cardinals. Let $g\subset\mathrm{Col}(\omega, {<}\delta)$ be $V$-generic and let $L(\Gamma^*_g, \mathbb{R}^*_g)$ be the derived model at $\delta$ computed in $V[g]$. Then the following hold in $V(\mathbb{R}^*_g)$:
    \begin{enumerate}
        \item $L({}^{\omega}\!\ord, \Gamma^*_g, \mathbb{R}^*_g)\models\Theta\mathsf{reg}$, and
        \item $L({}^{\omega}\!\ord, \Gamma^*_g, \mathbb{R}^*_g)[\langle\mu_{\alpha}\mid\alpha\in\ord\rangle]\models\Theta\mathsf{reg}+ \omega_1\text{ is supercompact}$,
    \end{enumerate}
    where $\mu_{\alpha}$ is the club filter on $\power_{\omega_1}({}^{\omega}\alpha)$.
\end{conj}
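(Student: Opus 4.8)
The plan is to reduce \cref{conj_on_global_ver_of_CDM} to a local analysis inside $V(\mathbb{R}^*_g)$ of the kind carried out in the body of this paper for hod mice. Write $M_0 = L(\Gamma^*_g, \mathbb{R}^*_g)$ for the ordinary derived model, $M_1 = L({}^{\omega}\!\ord, \Gamma^*_g, \mathbb{R}^*_g)$ for the Chang-type model, and $M_2 = M_1[\langle\mu_\alpha\mid\alpha\in\ord\rangle]$. By Woodin's derived model theorem $M_0\models\mathsf{AD}^+$, and since a Woodin limit of Woodin cardinals is in particular a limit of ${<}\delta$-strong cardinals, $M_0\models\mathsf{AD}_{\mathbb{R}}$. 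The first step is to propagate $\mathsf{AD}_{\mathbb{R}}$ from $M_0$ to $M_1$: one shows that the $L({}^{\omega}\!\ord,\dots)$-hierarchy does not add sets of reals below $\Theta^{M_1}$ that fail to be Suslin and co-Suslin, because (following the structure theory of \cite{CoveringChang}) countable sequences of ordinals in $M_1$ are coded by branches through trees lying in $\Gamma^*_g$, so any new set of reals remains $\infty$-Borel and, using the ${<}\delta$-strongs, Suslin. This gives $M_1\models\mathsf{AD}^+$ and in fact $\mathsf{AD}_{\mathbb{R}}$, and identifies $\Theta^{M_1}$ with the sup of the Suslin-co-Suslin ordinals there.

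The second and central step is to prove $\Theta$ is regular in $M_1$. Here the Woodinness of $\delta$ (as opposed to mere limit-of-Woodins) must be exploited exactly as a supercompactness hypothesis is used in Woodin's proof that $\Theta$ is regular in the ordinary derived model: given a putative cofinal map $f\colon\gamma\to\Theta^{M_1}$ with $\gamma<\Theta^{M_1}$, one captures $f$ by a term relation on the Woodin cardinal $\delta$ together with a countable sequence of ordinals naming its values, then uses a genericity iteration, or the stationary tower below $\delta$, to reflect the situation and derive a contradiction with the Suslin-co-Suslin closure established in Step~1. In the hod-mouse setting of the present paper this is replaced by an analysis of term relations in the hod mouse and its iteration strategy; the conjecture asks that the self-capturing provided by a single Woodin limit of Woodin cardinals suffices in place of the hod-pair hypothesis.

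The third step is to show each $\mu_\alpha$, the club filter on $\power_{\omega_1}({}^{\omega}\alpha)$, is a normal fine ultrafilter in $M_2$, so that $\omega_1$ is supercompact there. Normality and fineness of the club filter are automatic; the work is ultrafilterness, i.e.\ that for every $A\in M_2$ with $A\subset\power_{\omega_1}({}^{\omega}\alpha)$ either $A$ or its complement contains a club. For this one runs the Solovay-style argument: using $\mathsf{DC}$ and $\mathsf{AD}^+$ in $M_1$, every such $A$ is definable from a set of reals and a sequence of ordinals, and a Skolem-hull / ``catching your tail'' construction of a continuous $\in$-increasing $\omega_1$-chain decides $A$, with the club of its limit points landing inside $A$ or its complement. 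One then checks that this relativizes to $M_2$ and that adjoining the $\mu_\alpha$ as predicates does not destroy determinacy below $\Theta$, which holds because the $\mu_\alpha$ are ordinal-definable over $M_1$ from the point of view of $V(\mathbb{R}^*_g)$; this is the localized form of \cref{det_in_Chang}(2).

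The step I expect to be the main obstacle is Step~2, the regularity of $\Theta$. In all known arguments the regularity of $\Theta$ costs roughly a supercompact beyond $\mathsf{AD}^+$, and the content of \cref{conj_on_global_ver_of_CDM} is precisely that the extra Chang-model predicates ${}^{\omega}\!\ord$, together with the reflection available from one Woodin limit of Woodins, recover this strength. Making this precise seems to require either a new core-model-induction-style argument producing Suslin representations for the relevant long-extender and term-relation sets on $\delta$, or a substantial extension of the hod-mouse analysis of this paper to the genuinely $V$-level (non-mouse) setting; Steps~1 and~3 should by contrast be comparatively routine given the machinery of \cite{CoveringChang} and the standard theory of the club filter under $\mathsf{AD}^+$.
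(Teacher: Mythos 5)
This statement is a \emph{conjecture}; the paper offers no proof of it, and explicitly says so: what is actually proved is a strictly weaker hod-mouse analogue (\cref{summary_of_main_results}, \cref{main_thm_1}), in which the Chang-type model lives inside a symmetric collapse of an lbr hod mouse and only contains the bounded $\omega$-sequences below $\delta^{\mathcal{Q},\eta}_{\infty}$, not all of ${}^{\omega}\!\ord$. So your proposal cannot be matched against a proof in the paper, and it does not close the conjecture either. You concede yourself that Step~2 (regularity of $\Theta$) is open and would need either a new argument or a substantial extension of the hod-mouse analysis to the $V$-level; that is exactly the current state of affairs, so nothing is proved there. Step~1 is also not routine over an arbitrary $V$: even $\mathsf{AD}^+$ in Chang-type models over $V$ (\cref{det_in_Chang}) is a difficult theorem of Woodin requiring a proper class of Woodin limits of Woodin cardinals, and the mechanism by which the paper shows that no new sets of reals appear (\cref{sets_of_reals_in_CDM_plus}) leans irreducibly on the locality of the model inside iterates of the hod mouse (\cref{locality_of_CDM_plus}) and on the Wadge cofinality of the strategy codes $\mathrm{Code}(\Sigma^g_{\mathcal{P}\vert\xi})$ --- machinery with no counterpart when $V$ is not a mouse.

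The concrete error is in Step~3, which you describe as ``comparatively routine.'' The paper explicitly identifies this step as an obstruction in \cref{rem_on_def_of_CDM_plus}: the hull/club-catching argument (\cref{finding_clubs}) that yields ultrafilterness of the club filter on $\power_{\omega_1}(\alpha)$ fails for $\power_{\omega_1}({}^{\omega}\alpha)$, because the hulls $\mathrm{ran}(\pi^{\mathcal{R},\xi}_{\mathcal{R}^*,\infty})\cap{}^{\omega}\alpha$ can never be unbounded --- any $f\in{}^{\omega}\alpha$ outside the direct limit model is missed by every hull --- and any repair of unboundedness breaks closure. Likewise, the ``Solovay-style'' decision argument you invoke for deciding an arbitrary $A\subset\power_{\omega_1}(X)$ requires a surjection of $\mathbb{R}$ onto $X$, and for $X={}^{\omega}\alpha$ with $\alpha\geq\Theta$ no such surjection exists. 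This is precisely why \cref{def_of_CDM_plus} adjoins the club filters on $\power_{\omega_1}(\alpha)$ rather than on $\power_{\omega_1}({}^{\omega}\alpha)$, and why the paper leaves ${}^{\omega}\alpha$-supercompactness of $\omega_1$ as an open question. So the portion of clause (2) that you regard as easy is in fact one of the known open difficulties, alongside the regularity of $\Theta$ that you correctly flag.
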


In \cite{CoveringChang}, the third author introduced a new construction of a determinacy model, called the Chang model over the derived model ($\mathsf{CDM}$), inside a symmetric collapse of a hod mouse with infinitely many Woodin cardinals.
This model extends the derived model of the hod mouse by adding all bounded $\omega$-sequences of some ordinal without increasing its set of reals.
So the main result of \cite{CoveringChang} shows that some weaker form of (1) in \cref{conj_on_global_ver_of_CDM} is true in a hod mouse, as witnessed by $\mathsf{CDM}$.
One can found some applications of $\mathsf{CDM}$ in \cite{omega_strongly_meas_in_Pmax,Det_in_Chang_model}.
In this paper, we verify a weaker form of (2) in a hod mouse by constructing a model called the Chang model over the derived model with supercompact measures ($\mathsf{CDM}^+$).
Compared to $\mathsf{CM}$ and $\mathsf{CM}^+$, the advantage of generalized derived models in \cref{conj_on_global_ver_of_CDM} might be that one could prove that they satisfy $\Theta\mathsf{reg}$ in the same way as for the derived model.
This is indeed the case for $\mathsf{CDM}$ and $\mathsf{CDM}^+$.

\subsection{Determinacy and supercompactness of $\omega_1$}

Apart from the potential applications mentioned above, the study of models of determinacy with supercompact measures for $\omega_1$ is interesting in its own right.
This line of research was initiated by Solovay \cite{So21}, who showed that $\mathsf{AD}_{\mathbb{R}}$ implies that $\omega_1$ is $\mathbb{R}$-supercompact.
The existence and uniqueness of supercompact measures for $\omega_1$ under $\mathsf{AD}$ were studied by Harrington--Kechris \cite{HaKe81}, Becker \cite{Be81}, and Woodin \cite{Wo20}, using purely descriptive set theoretic methods.
Woodin and Neeman \cite{Neeman07} extended these results using inner model theory, proving that under $\mathsf{AD} + V = L(\mathbb{R})$, for any $\alpha < \Theta$, there is a unique supercompact measure on $\wp_{\omega_1}(\alpha)$.
Their proof in \cite{Neeman07} relies on the direct limit system of mice to represent HOD up to $\Theta$ in $L(\mathbb{R})$ (cf.\ \cite{StW16}),
Woodin observed that $\mathsf{AD}^+$ suffices for this result, using a relativized direct limit system (cf.\ \cite{St16, Sar21}).
Notably, our construction of determinacy models in this paper also employs a direct limit system of hod mice and shares technical similarities with the work of Woodin and Neeman.

Moreover, many theorems are known regarding the consistency strength of supercompact measures on $\omega_1$ in models of determinacy.
Woodin showed that the theory ``$\mathsf{AD} + \, \omega_1$ is $\mathbb{R}$-supercompact'' is equiconsistent with the existence of $\omega^2$ Woodin cardinals. This result was published for the first time by Trang in \cite{Tr13, Tr15}, who also obtained interesting generalizations of Woodin's result to $\omega^\alpha$ Woodin cardinals for $\alpha < \omega_1$.
Trang showed in \cite{Tr13,Tr15MALQ,EquiSpctMeas} that the theory ``$\Theta\mathsf{reg} + \omega_1 \text{ is } \power(\mathbb{R})\text{-supercompact}$'' is equiconsistent with the theory ``$\mathsf{AD}_{\mathbb{R}} + \; \Theta$ is measurable.''
In \cite{TrWi21}, Trang and Wilson studied strong compactness of $\omega_1$ and they also showed that if $\mathsf{DC}+\omega_1$ is $\power(\mathbb{R})$-supercompact holds, then there is a sharp for a model of $\mathsf{AD}_{\mathbb{R}} + \mathsf{DC}$.
In \cite{Nairian}, Blue, Larson, and the third author proved that the same type of determinacy models used in \cref{BlueLarsonSargsyan} can satisfy that $\omega_1$ is supercompact, and that the consistency strength of ``$\Theta\mathsf{reg}\, + \, \omega_1 \text{ is supercompact}$'' is strictly weaker than the existence of a Woodin cardinal that is a limit of Woodin cardinals.

\subsection{Summary of our main result}

All necessary terminology and notations will be defined in the next section, but we summarize our result here:

\begin{thm}\label{summary_of_main_results}
Let $(\mathcal{V}, \Omega)$ be an excellent least branch hod pair such that $\mathcal{V}\models\mathsf{ZFC}$.
Suppose that in $\mathcal{V}$, $\delta$ is a cardinal that is a limit of Woodin cardinals and, if $\delta$ is not regular, then its cofinality is not measurable.
We let $\mathcal{P}=\mathcal{V}\vert(\delta^+)^{\mathcal{V}}$ and let $\Sigma$ be the $(\omega, \delta+1)$-iteration strategy for $\mathcal{P}$ determined by $\Omega$.
Also, let $g\subset\mathrm{Col}(\omega, {<}\delta)$ be $\mathcal{V}$-generic.
Then there are $\mathcal{Q}\in I^*_g(\mathcal{P}, \Sigma)$ and $\eta<\delta$ such that
\[
\mathsf{CDM}^{+}(\mathcal{Q}, \eta)\models\mathsf{ZF} + \mathsf{AD}^{+} + \mathsf{AD}_{\mathbb{R}} + \text{$\omega_1$ is ${<}\delta^{\mathcal{Q}, \eta}_{\infty}$-supercompact}
\]
and $\delta^{\mathcal{Q}, \eta}_{\infty}$ is a cardinal $\geq\Theta$ in $\mathsf{CDM}^+(\mathcal{Q}, \eta)$.
Moreover,
\begin{itemize}
\item If $\delta$ is regular in $\mathcal{V}$, then $\mathsf{CDM}^{+}(\mathcal{Q}, \eta)\models\mathsf{DC}+\Theta$ is regular $+\,\delta^{\mathcal{Q}, \eta}_{\infty}$ is regular.
\item If $\delta$ is a limit of Woodin cardinals that is also a limit of ${<}\delta$-strong cardinals in $\mathcal{V}$, then in $\mathsf{CDM}^+(\mathcal{Q}, \eta)$, $\delta^{\mathcal{Q}, \eta}_{\infty}>\Theta$, $\Theta$ is measurable, and $\omega_1$ is $\power(\mathbb{R})$-supercompact.
\end{itemize}
\end{thm}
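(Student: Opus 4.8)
\emph{Overall strategy.} The plan is to run the direct-limit analysis of the $\Sigma$-iterates of $\mathcal{P}$ inside the symmetric collapse $\mathcal{V}[g]$, build $\mathsf{CDM}^{+}(\mathcal{Q},\eta)$ as the Chang-type closure of the derived model of a suitable iterate $\mathcal{Q}$, and then verify the three blocks of conclusions — $\mathsf{AD}^{+}+\mathsf{AD}_{\mathbb{R}}$, the supercompactness of $\omega_{1}$, and the ``moreover'' clauses — in turn. Concretely, I would first organize $I^{*}_{g}(\mathcal{P},\Sigma)$ as the directed system of all $\Sigma$-iterates of $\mathcal{P}$ arising in $\mathcal{V}[g]$ with the branch embeddings as maps, form its direct limit $\mathcal{P}_{\infty}$, and for $\mathcal{Q}$ in the system and $\eta<\delta$ let $\delta^{\mathcal{Q},\eta}_{\infty}$ be the associated direct-limit ordinal; by the standard derived-model computation $\Theta^{\mathsf{DM}(\mathcal{Q})}$ is the image of the supremum of the Woodins of $\mathcal{Q}$, and a sufficiently iterated $\mathcal{Q}$ together with a suitable $\eta$ forces $\delta^{\mathcal{Q},\eta}_{\infty}\ge\Theta$. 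Next I would define $\mathsf{CDM}^{+}(\mathcal{Q},\eta)$, as in \cite{CoveringChang} but with the club-filter measures adjoined, to be (the appropriate initial segment of) $L({}^{\omega}\!\ord,\Gamma^{*}_{g},\mathbb{R}^{*}_{g})[\langle\mu_{\alpha}\mid\alpha<\delta^{\mathcal{Q},\eta}_{\infty}\rangle]$ computed in $\mathcal{V}(\mathbb{R}^{*}_{g})$, where $\mu_{\alpha}$ is the club filter on $\power_{\omega_1}({}^{\omega}\alpha)$.

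\emph{No new reals and determinacy.} The core of the first block is a covering argument in the style of \cite{CoveringChang}: every bounded $\omega$-sequence of ordinals and every finite fragment of the measure sequence appearing in the $L$-construction of $\mathsf{CDM}^{+}(\mathcal{Q},\eta)$ is already captured at a bounded stage of the symmetric collapse over a $\Sigma$-iterate of $\mathcal{Q}$ — here the hypothesis that $\cf(\delta)$ is non-measurable when $\delta$ is singular rules out a cofinal sequence that would add reals. Hence $\mathbb{R}^{\mathsf{CDM}^{+}(\mathcal{Q},\eta)}=\mathbb{R}^{*}_{g}$ and $\Gamma^{*}_{g}\subset\mathsf{CDM}^{+}(\mathcal{Q},\eta)$, so $\mathsf{AD}^{+}$ transfers from $\mathsf{DM}(\mathcal{Q})$ (which satisfies it by Woodin's derived model theorem): a hypothetical non-determined length-$\omega$ game in $\mathsf{CDM}^{+}(\mathcal{Q},\eta)$ has its payoff set in $\Gamma^{*}_{g}$ and hence is determined in $\mathsf{DM}(\mathcal{Q})$, a contradiction; ordinal determinacy and $\infty$-Borelness are handled identically, and $\mathsf{DC}_{\mathbb{R}}$ is inherited from $\mathcal{V}(\mathbb{R}^{*}_{g})$. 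For $\mathsf{AD}_{\mathbb{R}}$ I would deduce from the supercompactness of $\omega_{1}$ established next, together with $\mathsf{AD}^{+}$, that every set of reals is Suslin and invoke the Martin–Woodin characterization of $\mathsf{AD}_{\mathbb{R}}$ (or, alternatively, an Ikegami–Trang-style reflection as in \cite{spct_of_omega_1} once enough strong compactness of $\omega_{1}$ is in hand); if the excellence of the hod pair already gives $\mathsf{DM}(\mathcal{Q})\models\mathsf{AD}_{\mathbb{R}}$, it transfers directly.

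\emph{Supercompactness of $\omega_{1}$, the main obstacle.} This is the heart of the argument. One must show that, restricted to $\mathsf{CDM}^{+}(\mathcal{Q},\eta)$, each $\mu_{\alpha}$ with $\alpha<\delta^{\mathcal{Q},\eta}_{\infty}$ is an $\omega_{1}$-complete normal fine ultrafilter on $\power_{\omega_1}({}^{\omega}\alpha)$; normality, fineness, and $\sigma$-completeness are routine once $\mu_{\alpha}$ is known to be an ultrafilter there, so everything reduces to showing that for every $A\subset\power_{\omega_1}({}^{\omega}\alpha)$ in $\mathsf{CDM}^{+}(\mathcal{Q},\eta)$ either $A$ or its complement contains a club. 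Such an $A$ is definable over $L({}^{\omega}\!\ord,\Gamma^{*}_{g},\mathbb{R}^{*}_{g})[\langle\mu_{\beta}\rangle]$ from ordinal parameters below $\delta^{\mathcal{Q},\eta}_{\infty}$, a real, and a set in $\Gamma^{*}_{g}$; I would then run a reflection argument matching a club of countable elementary substructures against the iteration maps of $\Sigma$ — a tail of the $\Sigma$-iterates of $\mathcal{Q}$ together with a tail of $g$ reconstructs the parameters of $A$, and the generic absoluteness of the derived model together with the homogeneity of $\mathrm{Col}(\omega,{<}\delta)$ pins down $A$ modulo a non-stationary set. The genuinely delicate point, absent in \cite{CoveringChang}, is that the measure sequence $\langle\mu_{\beta}\rangle$ occurs among the parameters, so this reflection must be run simultaneously with the recursion building $\mathsf{CDM}^{+}(\mathcal{Q},\eta)$: one proves by induction on the levels of the $L$-construction that each level is ``measure-correct'' below $\delta^{\mathcal{Q},\eta}_{\infty}$. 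This is where essentially all the technical work lies, and it also forces the careful choice of $\mathcal{Q}$ (iterated far enough that the relevant direct limit has stabilized) and of $\eta$; the same analysis shows $\delta^{\mathcal{Q},\eta}_{\infty}$ is a cardinal $\ge\Theta$ in $\mathsf{CDM}^{+}(\mathcal{Q},\eta)$.

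\emph{The ``moreover'' clauses.} If $\delta$ is regular, the symmetric collapse at a regular $\delta$ captures every countable sequence of elements of $\mathsf{CDM}^{+}(\mathcal{Q},\eta)$ at a bounded stage, giving $\mathsf{DC}$; and a cofinal $\omega$-sequence, or a cofinal map from a smaller ordinal, into $\Theta$ or into $\delta^{\mathcal{Q},\eta}_{\infty}$ would by the covering argument descend to an analogous map below $\delta$ in $\mathcal{V}$, contradicting the regularity of $\delta$, so both ordinals are regular in $\mathsf{CDM}^{+}(\mathcal{Q},\eta)$. If $\delta$ is moreover a limit of ${<}\delta$-strong cardinals, a ${<}\delta$-strong $\kappa<\delta$ whose direct-limit image is cofinal in $\Theta$ induces, via its extenders, a measure on $\Theta$ in the derived model and hence in $\mathsf{CDM}^{+}(\mathcal{Q},\eta)$, so $\Theta$ is measurable; the additional strongs also push the direct-limit ordinal strictly past $\Theta$, giving $\delta^{\mathcal{Q},\eta}_{\infty}>\Theta$; and then $\Theta$ measurable together with $\omega_{1}$ being ${<}\delta^{\mathcal{Q},\eta}_{\infty}$-supercompact for a $\delta^{\mathcal{Q},\eta}_{\infty}>\Theta$ upgrades — using that under $\mathsf{AD}^{+}$ every set of reals is coded by a bounded subset of $\Theta$ — to $\omega_{1}$ being $\power(\mathbb{R})$-supercompact, in parallel with Trang's equiconsistency analysis in \cite{EquiSpctMeas}.
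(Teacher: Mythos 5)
Your overall architecture matches the paper's: iterate to a $(\mathcal{Q},\eta)$ that stabilizes the direct limit ordinal, prove by induction on the levels of the $L$-hierarchy that the model is correctly computed inside symmetric collapses of further genericity iterates (your ``measure-correct'' recursion), deduce that the model has no sets of reals beyond $\Gamma^*_g$ so that $\mathsf{AD}^+ + \mathsf{AD}_{\mathbb{R}}$ transfers from the derived model, and get the ultrafilter property by reflecting a definable $A$ to a single iterate and comparing against a club of ranges of direct limit maps. However, there is one concrete error that breaks the central supercompactness argument as you have set it up: you adjoin the club filters on $\power_{\omega_1}({}^{\omega}\alpha)$, whereas the paper deliberately adjoins the club filters on $\power_{\omega_1}(\alpha)$ (the theorem only claims $\alpha$-supercompactness for ordinals $\alpha<\delta^{\mathcal{Q},\eta}_{\infty}$, which by definition means measures on $\power_{\omega_1}(\alpha)$). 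With your choice the very first step of the reflection fails: the natural club candidate consists of sets of the form $\mathrm{ran}(\pi^{\mathcal{R},\xi}_{\mathcal{R}^*,\infty})\cap{}^{\omega}\alpha$, and this family is not even unbounded in $\power_{\omega_1}({}^{\omega}\alpha)$, since any $f\in{}^{\omega}\alpha\setminus\mathcal{M}_{\infty}(\mathcal{Q},\eta)$ belongs to no such set (the ranges of the direct limit maps live inside $\mathcal{M}_{\infty}$, while the Chang-model closure contains many $\omega$-sequences outside it); repairing $\sigma_{\mathcal{R}^*,\xi}$ to absorb such $f$ destroys closedness. The paper flags exactly this obstruction and leaves the ${}^{\omega}\alpha$-version as an open question, so your proof of the main block does not go through without changing the definition of the model.

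Two smaller points. First, your route to the measurability of $\Theta$ --- that the extenders of a ${<}\delta$-strong cardinal ``induce a measure on $\Theta$'' --- is not substantiated and is not what the paper does: the role of the ${<}\delta$-strong cardinals is only to force $\delta^{\mathcal{Q},\eta}_{\infty}>\Theta$ (since $\Theta$ is the direct-limit image of the least ${<}\delta$-strong above $\eta$), and the measure witnessing measurability is again the club filter, this time on $\Theta\cap\mathrm{Cof}(\omega)$, shown to be an ultrafilter on the model by the same $\sup(\mathrm{ran}(\pi^{\mathcal{R},\xi}_{\mathcal{R}^*,\infty})\cap\Theta)$ reflection; you would need to supply that argument (it requires $\delta^{\mathcal{Q},\eta}_{\infty}>\Theta$ so that $\Theta$ is in the range of cofinally many direct limit maps). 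Second, for $\mathsf{AD}_{\mathbb{R}}$ the clean route is the one you mention only as a fallback: the derived model of the hod mouse already satisfies $\mathsf{AD}_{\mathbb{R}}$ outright (no strong cardinals needed), so it transfers once you know the model adds no sets of reals; the detour through supercompactness of $\omega_1$ and Suslin representations is unnecessary and circular in your ordering, since you invoke $\mathsf{AD}_{\mathbb{R}}$-style consequences before the supercompactness block is complete.
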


This follows from \cref{main_thm_1,Theta_measurable} as well as \cref{P(R)-supercompactness,main_thm_2,main_thm_3} below.
The third author recently showed in so far unpublished work that the assumption in \cref{summary_of_main_results} is consistent relative to a Woodin limit of Woodin cardinals, but the proof is not published yet.
So the hypothesis in \cref{summary_of_main_results} is weaker than the assumption of Woodin's \cref{det_in_Chang} in terms of consistency strength.
We leave the question on how large $\delta^{\mathcal{Q}, \eta}_{\infty}$ can be for future work but conjecture the following.

\begin{conj}\label{conj_on_value_of_delta_infty}
Suppose that $\delta$ is a Woodin limit of Woodin cardinals in $\mathcal{V}$ and that the conclusion of \cref{summary_of_main_results} holds for $\mathcal{Q}$ and $\eta$.
Then $\delta^{\mathcal{Q}, \eta}_{\infty}$ is a weakly inaccessible cardinal above $\Theta$ in $\mathsf{CDM}^+(\mathcal{Q}, \eta)$.
\end{conj}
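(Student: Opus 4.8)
The plan is to reduce \cref{conj_on_value_of_delta_infty} to a $\mathsf{HOD}$-computation above $\Theta$ inside $\mathsf{CDM}^+(\mathcal{Q},\eta)$. First note that if $\delta$ is a Woodin limit of Woodin cardinals, then $\delta$ is regular (indeed Mahlo) and, since the ${<}\delta$-strong cardinals are cofinal below a Woodin cardinal, $\delta$ is also a limit of ${<}\delta$-strong cardinals; hence both itemized conclusions of \cref{summary_of_main_results} apply. So, granting its conclusion, we already know that $\mathsf{CDM}^+(\mathcal{Q},\eta)$ satisfies $\mathsf{DC}$, that $\Theta$ is regular, that $\delta^{\mathcal{Q},\eta}_\infty>\Theta$, and that $\delta^{\mathcal{Q},\eta}_\infty$ is a \emph{regular} cardinal there. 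Thus \cref{conj_on_value_of_delta_infty} reduces to showing that $\delta^{\mathcal{Q},\eta}_\infty$ is a \emph{limit} cardinal in $\mathsf{CDM}^+(\mathcal{Q},\eta)$.

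To see why this should hold, recall that $\delta^{\mathcal{Q},\eta}_\infty$ is by definition the image $\pi(\delta^{\mathcal{Q}})$ of the Woodin limit of Woodin cardinals $\delta^{\mathcal{Q}}$ under the direct limit embedding $\pi=\pi^{\mathcal{Q},\eta}_\infty$ into the relevant hod limit $\mathcal{M}_\infty=\mathcal{M}_\infty(\mathcal{Q},\eta)$. Iteration maps preserve Woodinness and the property of being a limit of Woodin cardinals, so $\mathcal{M}_\infty\models$ ``$\delta^{\mathcal{Q},\eta}_\infty$ is a Woodin cardinal that is a limit of Woodin cardinals''; in particular the Woodin cardinals of $\mathcal{M}_\infty$ below $\delta^{\mathcal{Q},\eta}_\infty$ are cofinal in $\delta^{\mathcal{Q},\eta}_\infty$. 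It would therefore suffice to establish two things. (i) An appropriate generalization above $\Theta$ of the Steel--Woodin $\mathsf{HOD}$-analysis (\cite{StW16}) for the Chang-type model: $\mathcal{M}_\infty$ is a definable inner model of $\mathsf{HOD}^{\mathsf{CDM}^+(\mathcal{Q},\eta)}$ computing $V_{\delta^{\mathcal{Q},\eta}_\infty}$ correctly, obtained via generic generators over $\mathcal{Q}$ and $\Sigma$-mice capturing $\power(\mathbb{R})^{\mathsf{CDM}^+(\mathcal{Q},\eta)}$. (ii) A covering property for $\mathsf{CDM}^+(\mathcal{Q},\eta)$ relative to $\mathcal{M}_\infty$ below $\delta^{\mathcal{Q},\eta}_\infty$, say: every bounded subset of $\delta^{\mathcal{Q},\eta}_\infty$ lying in $\mathsf{CDM}^+(\mathcal{Q},\eta)$ belongs to a generic extension of $\mathcal{M}_\infty$ by a poset whose $\mathcal{M}_\infty$-cardinality is below the next Woodin cardinal of $\mathcal{M}_\infty$ above its rank. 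Granting (i) and (ii), a standard small-forcing-preserves-cardinals argument shows that each Woodin cardinal of $\mathcal{M}_\infty$ in $(\Theta,\delta^{\mathcal{Q},\eta}_\infty)$ is a cardinal of $\mathsf{CDM}^+(\mathcal{Q},\eta)$, so $\delta^{\mathcal{Q},\eta}_\infty$ is a limit of $\mathsf{CDM}^+(\mathcal{Q},\eta)$-cardinals, hence a limit cardinal there, which with the regularity noted above yields weak inaccessibility. Equivalently, one may organize (i)--(ii) around a ``generalized Solovay sequence'' of limit length cofinal in $\delta^{\mathcal{Q},\eta}_\infty$ extracted from the ${<}\delta^{\mathcal{Q},\eta}_\infty$-supercompactness measures for $\omega_1$; this exhibits $\delta^{\mathcal{Q},\eta}_\infty$ directly as a supremum of cardinals.

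The main obstacle is step (i) together with the part of (ii) going beyond $\Theta$. Below $\Theta$ both ingredients are well understood in the $\mathsf{AD}^+$ context --- the $\mathsf{HOD}$-analysis and the Vop\v{e}nka/capturing argument showing that sets of ordinals are generic over $\mathsf{HOD}$ for small posets --- but here one needs them \emph{above} $\Theta$, inside a Chang-type model whose structure above $\Theta$ is itself built from $\mathcal{Q}$ and its strategy $\Sigma$. In particular one must control the posets relevant to coding bounded subsets of $\delta^{\mathcal{Q},\eta}_\infty$, showing they are small relative to the Woodin cardinals of $\mathcal{M}_\infty$ (equivalently, that those Woodin cardinals remain suitably inaccessible after passing to $\mathsf{CDM}^+(\mathcal{Q},\eta)$), which should in turn follow from (i). One must also verify that the ${<}\delta^{\mathcal{Q},\eta}_\infty$-supercompactness measures for $\omega_1$, being derived from the same direct limit system, are internal to $\mathcal{M}_\infty$ and hence collapse no $\mathcal{M}_\infty$-cardinals below $\delta^{\mathcal{Q},\eta}_\infty$. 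We expect all of this to be within reach of the methods developed here pushed one level further, which is why we are content to state \cref{conj_on_value_of_delta_infty} as a conjecture and leave the details to future work.
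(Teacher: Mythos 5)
The statement you are trying to prove is stated in the paper as a \emph{conjecture}, and the paper offers no proof of it: the authors explicitly ``leave the question on how large $\delta^{\mathcal{Q},\eta}_{\infty}$ can be for future work,'' and \cref{ques_on_delta_infty} records that it is open whether even $\delta^{\mathcal{Q},\eta}_{\infty}>\Theta^{+}$ is consistent. Your proposal does not close this gap. The preliminary reductions are fine and essentially already in the paper: a Woodin limit of Woodin cardinals is regular and is a limit of ${<}\delta$-strong cardinals, so \cref{main_thm_3} gives regularity of $\delta^{\mathcal{Q},\eta}_{\infty}$ and \cref{main_thm_2} gives $\delta^{\mathcal{Q},\eta}_{\infty}>\Theta$; hence the conjecture is equivalent to $\delta^{\mathcal{Q},\eta}_{\infty}$ being a limit cardinal of $\mathsf{CDM}^+(\mathcal{Q},\eta)$. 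But that is exactly the open content, and your steps (i) and (ii) --- a $\mathsf{HOD}$-analysis of $\mathsf{CDM}^+(\mathcal{Q},\eta)$ above $\Theta$ identifying $\mathcal{M}_{\infty}(\mathcal{Q},\eta)$, plus a covering/genericity property for bounded subsets of $\delta^{\mathcal{Q},\eta}_{\infty}$ --- are asserted, not proved. You concede as much in your last paragraph. A proof of the conjecture would in particular answer \cref{ques_on_delta_infty} affirmatively, so one should expect genuinely new mathematics to be required, not a routine transfer of the below-$\Theta$ theory.

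There is also a concrete obstruction your sketch does not address. The paper's only mechanism for certifying that an ordinal $\lambda\in[\delta,\delta^{\mathcal{Q},\eta}_{\infty}]$ is a cardinal of $\mathsf{CDM}^+(\mathcal{Q},\eta)$ (the theorem preceding \cref{main_thm_3}) requires $\lambda$ to be a cardinal in $\mathcal{M}_{\infty}(\mathcal{S},\eta')$ \emph{uniformly} for all genericity iterates $\mathcal{S}$ of $\mathcal{Q}$ above $\eta$ and all $\eta'\in[\eta,\delta)$. Your argument works with the single model $\mathcal{M}_{\infty}=\mathcal{M}_{\infty}(\mathcal{Q},\eta)$ and its Woodin cardinals in $(\Theta,\delta^{\mathcal{Q},\eta}_{\infty})$, but these direct limits genuinely vary with $(\mathcal{S},\eta')$ (the paper notes $\delta^{\mathcal{Q},\eta}_{\infty}>\delta^{\mathcal{Q},\eta'}_{\infty}$ is possible, which is why the stabilization lemma is needed at all), so a Woodin cardinal of one $\mathcal{M}_{\infty}(\mathcal{S},\eta')$ in that interval need not be an ordinal in the range of, let alone a cardinal of, another. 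Without controlling this, the claim that $\delta^{\mathcal{Q},\eta}_{\infty}$ is a limit of $\mathsf{CDM}^+(\mathcal{Q},\eta)$-cardinals does not follow. Finally, the remark that the supercompactness measures are ``internal to $\mathcal{M}_{\infty}$'' cannot be right as stated: $\mathcal{M}_{\infty}$ is a model of $\mathsf{AC}$ and the relevant filters live on $\power_{\omega_1}(\alpha)$ as computed in $\mathsf{CDM}^+(\mathcal{Q},\eta)$, which contains countable sets not in $\mathcal{M}_{\infty}$.
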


Finally, we would like to mention that Steel independently found a variant of $\mathsf{CDM}^+$ starting from a hod mouse $\mathcal{V}$ with a measurable Woodin cardinal $\delta$ (an hypothesis that is currently not known to be consistent).
His model is also constructed in $\mathcal{V}[g]$, where $g\subset\mathrm{Col}(\omega, {<}\delta)$ is $\mathcal{V}$-generic, but unlike our model, it has supercompact measures on $\power_{\omega_1}({}^{\omega}\alpha)$ for all $\alpha<\omega_2^{\mathcal{V}[g]}$.
As an application of this model, Steel extended the first and third authors' work in \cite{Det_in_Chang_model} to show that assuming the existence of a hod mouse with a measurable Woodin cardinal, $\mathsf{CM}^+\models\mathsf{AD}^+ + \omega_1\text{ is supercompact}$.
See \cite{gsnote} for the details.

\subsection*{Acknowledgments}

The authors are grateful to the anonymous referee for numerous helpful comments, which have significantly improved the paper.
This research was funded in whole or in part by the Austrian Science Fund (FWF) [10.55776/V844, 10.55776/Y1498, 10.55776/I6087].
The third author's work is funded by the National Science Centre, Poland under the Weave-UNISONO call in the Weave programme, registration number UMO-2021/03/Y/ST1/00281.

\section{Construction}

We start with recalling basic notions.
For any set $X$, let $\power_{\omega_1}(X)$ be the set of all countable subsets of $X$.
For $C\subset\power_{\omega_1}(X)$, we say
\begin{enumerate}
    \item $C$ is \emph{unbounded} if for any $\sigma\in\power_{\omega_1}(X)$, there is $\tau\in C$ such that $\sigma\subset\tau$.
    \item $C$ is \emph{closed} if whenever $\langle\sigma_n\mid n<\omega\rangle$ is a $\subset$-increasing sequence such that $\sigma_n\in C$ for all $n<\omega$, then $\bigcup_{n<\omega}\sigma_n\in C$.
    \item $C$ is \emph{a club in $\power_{\omega_1}(X)$} \footnote{This notion is sometimes called a weak club.} if $C$ is unbounded and closed.
\end{enumerate}
The \emph{club filter on $\power_{\omega_1}(X)$} is defined as the filter generated by club subsets of $\power_{\omega_1}(X)$.
For a filter $\mu$ on $\power_{\omega_1}(X)$, we say
\begin{enumerate}
    \item $\mu$ is \emph{countably complete} if it is closed under countable intersections.
    \item $\mu$ is \emph{fine} if for any $x\in X$, $\{\sigma\in\power_{\omega_1}(X)\mid x\in\sigma\}\in\mu$.
    \item $\mu$ is \emph{normal} if it is closed under diagonal intersections, i.e., whenever $\langle A_x\mid x\in X\rangle$ is a sequence such that $A_x\in\mu$ for all $x\in X$, then $\triangle_{x\in X} A_x := \{\sigma\in\power_{\omega_1}(X)\mid\sigma\in\bigcap_{x\in\sigma}A_x\}\in\mu$.
\end{enumerate}
For any uncountable set $X$, the club filter on $\power_{\omega_1}(X)$ has all these properties.

\begin{dfn}
Let $X$ be an uncountable set.
A \emph{supercompact measure on $\power_{\omega_1}(X)$} is a countably complete normal fine ultrafilter on $\power_{\omega_1}(X)$.
We say \emph{$\omega_1$ is $X$-supercompact} if there is a supercompact measure on $\power_{\omega_1}(X)$.
Also, we say \emph{$\omega_1$ is supercompact} if $\omega_1$ is $X$-supercompact for any uncountable set $X$.
\end{dfn}

Of course, this definition is meaningful only in the absence of the Axiom of Choice.
See \cite{spct_of_omega_1} for several conclusions from supercompactness of $\omega_1$.

\subsection{Setup}

Our construction of a determinacy model is done inside a symmetric collapse of some hod mouse.
Roughly speaking, a hod premouse is a structure of the form $L_{\alpha}[\vec{E}, \Sigma]$, where $\vec{E}$ is a coherent sequence of extenders and $\Sigma$ is a fragment of its own iteration strategy.
\footnote{A hod premouse is designed for representing $\mathrm{HOD}$ of a determinacy model of the form $L(\power(\mathbb{R}))$, which is why the name includes ``hod.''}
A hod pair is a pair of a hod premouse and its iteration strategy, assuming that this iteration strategy has certain regularity properties.
In this paper, we use Steel's least branch (lbr) hod premice introduced in \cite{CPMP}.
See \cite[Definition 9.2.2]{CPMP} for the precise definition of a hod pair.

To avoid including $\mathsf{AD}^+$ in our background theory, we need to assume regularity properties of the iteration strategy in a hod pair that follow from $\mathsf{AD}^+$.
According to \cite{CoveringChang}, we say that a hod pair $(\mathcal{V}, \Omega)$ is \emph{excellent} if $\mathcal{V}$ is countable, $\Omega$ is $(\omega_1, \omega_1+1)$-iteration strategy for $\mathcal{V}$, and whenever $\mathcal{P}\trianglelefteq\mathcal{V}, \mathcal{P}\cap\ord$ is an inaccessible cardinal of $\mathcal{V}$, $\rho(\mathcal{V})>\mathcal{P}\cap\ord$, and $\Sigma=\Omega_{\mathcal{P}}$, then the following hold:
\begin{enumerate}
    \item $\Sigma$ admits full normalization, i.e., whenever $\mathcal{T}$ is an iteration tree on $\mathcal{P}$ via $\Sigma$ with last model $\mathcal{Q}$, there is a normal iteration $\mathcal{U}$ on $\mathcal{P}$ via $\Sigma$ with last model $\mathcal{Q}$ such that $\pi^{\mathcal{T}}$ exists if and only if $\pi^{\mathcal{U}}$ exists, and if $\pi^{\mathcal{T}}$ exists then $\pi^{\mathcal{T}}=\pi^{\mathcal{U}}$,
    \item $\Sigma$ is positional, i.e., if $\mathcal{Q}$ is a $\Sigma$-iterate of $\mathcal{P}$ via an iteration tree $\mathcal{T}$ and it is also via another iteration tree $\mathcal{U}$, then $\Sigma_{\mathcal{T}, \mathcal{Q}}=\Sigma_{\mathcal{U}, \mathcal{Q}}$,
    \footnote{We then are allowed to denote the unique tail strategy for $\mathcal{Q}$ by $\Sigma_{\mathcal{Q}}$.}
    \item $\Sigma$ is directed, i.e., if $\mathcal{Q}_0$ and $\mathcal{Q}_1$ are $\Sigma$-iterates of $\mathcal{P}$ via iteration trees above some ordinal $\eta$, then there is an $\mathcal{R}$ such that $\mathcal{R}$ is a $\Sigma_{\mathcal{Q}_i}$-iterate of $\mathcal{Q}_i$ via an iteration tree above $\eta$ for any $i\in\{0, 1\}$,
    \item $(\mathcal{P}, \Sigma)$ satisfies generic interpretability in the sense of \cite[Theorem 11.1.1]{CPMP}, and
    \item $\Sigma$ is segmentally normal, i.e., whenever $\eta$ is inaccessible cardinal of $\mathcal{P}$ such that $\rho(\mathcal{P})>\eta$, $\mathcal{Q}$ is a non-dropping $\Sigma$-iterate of $\mathcal{P}$ via an iteration tree $\mathcal{T}$ that is above $\eta$, and $\mathcal{R}$ is a non-dropping $\Sigma_{\mathcal{Q}}$-iterate of $\mathcal{Q}$ via an iteration tree $\mathcal{U}$ that is based on $\mathcal{Q}\vert\eta$, then $\Sigma_{\mathcal{P}\vert\eta}=(\Sigma_{\mathcal{Q}})_{\mathcal{P}\vert\eta}$ and letting $\mathcal{R}^*$ be a non-dropping $\Sigma$-iterate of $\mathcal{P}$ via the iteration tree $\mathcal{U}^*$ that has the same extenders and branches as $\mathcal{U}$, $\mathcal{R}$ is a non-dropping $\Sigma_{\mathcal{R}^*}$-iterate of $\mathcal{R}^*$ via a normal iteration tree that is above $\pi_{\mathcal{P}, \mathcal{R}^*}(\eta)$.
\end{enumerate}
Siskind and Steel showed that under $\mathsf{AD}^+$, every countable hod pair is excellent (\cite{CPMP, full_normalization}).
Our definition of excellence has slight differences from \cite[Definition 2.1]{CoveringChang}.
First, we omit stability and pullback consistency from the definition because they are already part of the definition of a hod pair in \cite{CPMP}.
Also, we do not restrict to strongly non-dropping iteration trees, simply because it turns out that we do not have to.
See the remark after \cref{genericity_iteration} as well.
The consequence of excellence that the reader should be particularly aware of is that if a hod pair $(\mathcal{V}, \Omega)$ is excellent, then 
\begin{itemize}
    \item for any $\mathcal{P}$ and $\Sigma$ as in the definition of excellence, $\Sigma$ has a canonical extension $\Sigma^g$ in $\mathcal{P}[g]$, where $g\subset\mathrm{Col}(\omega, {<}\delta)$ is $\mathcal{P}$-generic and $\delta$ is the supremum of all Woodin cardinals of $\mathcal{P}$, and
    \item internal direct limit models as defined in \cref{dfn:internal_direct_limit_system} are well-defined.
\end{itemize}

Now we describe our setup, which is the same as in \cite{CoveringChang}.
Let $(\mathcal{V}, \Omega)$ be an excellent hod pair such that $\mathcal{V}\models\mathsf{ZFC}$.
Suppose that in $\mathcal{V}$, $\delta$ is a cardinal that is a limit of Woodin cardinals and if $\delta$ is not regular, then its cofinality is not measurable.
\footnote{Throughout this paper, we adopt the following standard convention: if $\mathcal{M}$ is an lbr hod premouse, then ``$\delta$ has some large cardinal property in $\mathcal{M}$'' actually means ``the extender sequence of $\mathcal{M}$ witnesses that $\delta$ has some large cardinal property in $\mathcal{M}$.''}
We let $\mathcal{P}=\mathcal{V}\vert(\delta^+)^{\mathcal{V}}$ and let $\Sigma$ be the $(\omega, \delta+1)$-iteration strategy for $\mathcal{P}$ determined by the strategy predicate of $\mathcal{V}$.
Also, let $g\subset\mathrm{Col}(\omega, {<}\delta)$ be $\mathcal{V}$-generic.
We fix the objects defined in this paragraph throughout the paper and work in $\mathcal{V}[g]$ unless otherwise noted.

Let $\mathsf{DM}=L(\Gamma^*_g, \mathbb{R}^*_g)$ be the derived model at $\delta$ computed in $\mathcal{V}[g]$.
The following result is part of our motivation to study the Chang model over the derived model together with $\Theta\mathsf{reg}$.

\begin{thm}[\cite{CPMP,dm_of_self_it}]\label{determinacy_in_DM}
The set of all sets of reals in $\mathsf{DM}$ is $\Gamma^*_g$ and $\mathsf{DM}\models\mathsf{AD}^{+}+\mathsf{AD}_{\mathbb{R}}$.
Moreover, if $\delta$ is regular in $\mathcal{V}$, then $\mathsf{DM}\models\Theta$ is regular.
\end{thm}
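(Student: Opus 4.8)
The plan is to obtain all three assertions from Woodin's derived model theorem together with the hod analysis of $\mathsf{DM}$, using crucially that $\mathcal{V}$ is self-iterable. First I would establish $\mathsf{AD}^{+}$ and the identity $\power(\mathbb{R})^{\mathsf{DM}}=\Gamma^*_g$. That $\mathsf{DM}\models\mathsf{AD}^{+}$ is Woodin's theorem once one checks that the derived model construction is insensitive to the strategy predicate on the extender sequence of $\mathcal{V}$; the only thing to verify is that the term relations and Martin--Solovay trees occurring in the argument can be built inside $\mathcal{V}$, and excellence (in particular generic interpretability, so that $\Sigma$ has a canonical extension to generic extensions) supplies exactly the missing ingredient. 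For $\power(\mathbb{R})^{\mathsf{DM}}=\Gamma^*_g$, the inclusion $\supseteq$ is by construction, and for $\subseteq$ one uses that under $\mathsf{AD}^{+}$ every set of reals of $L(\Gamma^*_g,\mathbb{R}^*_g)$ is ordinal-definable from a real and a member of $\Gamma^*_g$; running such a definition through the Woodin cardinals of $\mathcal{V}$ below $\delta$ while iterating out genericity via $\Sigma$ (cf.\ the remark after \cref{genericity_iteration}) shows the set is already ${<}\delta$-universally Baire in some $\mathcal{V}[g\uphar\alpha]$, hence belongs to $\Gamma^*_g$.

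For $\mathsf{AD}_{\mathbb{R}}$ the point is that the hod hypothesis does work a bare limit of Woodin cardinals cannot. I would show that every set of reals of $\mathsf{DM}$ is Suslin and co-Suslin in $\mathsf{DM}$, and then invoke Woodin's theorem that, under $\mathsf{AD}^{+}$, this is equivalent to $\mathsf{AD}_{\mathbb{R}}$ (equivalently, that every set of reals has a scale, equivalently that the Solovay sequence has limit length). To see that each $A\in\Gamma^*_g$ is Suslin and co-Suslin in $\mathsf{DM}$, use the hod pair hierarchy below $\delta$: there is a $\Sigma$-iterate $\mathcal{Q}$ of $\mathcal{P}$ and a relevant initial segment $\mathcal{R}$ such that $\mathrm{Code}(\Sigma_{\mathcal{R}})\in\Gamma^*_g$ Wadge-dominates $A$, and since there are Woodin cardinals of $\mathcal{V}$ above the base of $\mathcal{R}$, the hod analysis of the derived model provides trees in $\mathsf{DM}$ projecting to $\mathrm{Code}(\Sigma_{\mathcal{R}})$ and to its complement. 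As Suslin-and-co-Suslin-ness is downward closed under Wadge reducibility and the sets $\mathrm{Code}(\Sigma_{\mathcal{R}})$ are Wadge-cofinal in $\Gamma^*_g$, every set of reals of $\mathsf{DM}$ is Suslin and co-Suslin there, so $\mathsf{DM}\models\mathsf{AD}_{\mathbb{R}}$.

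Finally, assume $\delta$ is regular in $\mathcal{V}$ and aim for $\Theta^{\mathsf{DM}}$ regular in $\mathsf{DM}$. This is the hod-mouse counterpart of Woodin's argument from a $\delta$-supercompact cardinal, and it is the step I expect to be the main obstacle. The idea is that a cofinal map $f\colon\lambda\to\Theta^{\mathsf{DM}}$ in $\mathsf{DM}$ with $\lambda<\Theta^{\mathsf{DM}}$ lives in some $L_{\gamma}(\Gamma^*_g,\mathbb{R}^*_g)$ and hence, via the hod analysis of $\mathrm{HOD}^{\mathsf{DM}}$ up to $\gamma$ and the internal direct limit system of \cref{dfn:internal_direct_limit_system}, can be read off from the directed system of $\Sigma$-iterates of $\mathcal{P}$ together with their iteration maps, with $\Theta^{\mathsf{DM}}$ arising as the image of $\delta$ under the direct limit embedding. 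Pulling $f$ back through this system would then produce, inside $\mathcal{V}$, a cofinal map into $\delta$ of length strictly below $\delta$, contradicting the regularity of $\delta$. Making this descent precise — identifying which bounded stage captures the witness $f$ and controlling the iteration maps well enough to pull it back — is where the full strength of excellence (full normalization, positionality, directedness, generic interpretability, and segmental normality) is needed; the argument parallels the relativized direct limit computations of Woodin and Neeman mentioned in the introduction, as well as the analogous step in \cite{CoveringChang}.
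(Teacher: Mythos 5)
The paper offers no argument for this theorem: it is quoted as a black box, with the first two assertions attributed to Steel \cite[Theorem 11.3.2]{CPMP} and the generalization plus the regularity of $\Theta$ to \cite{dm_of_self_it}. Measured against those cited proofs, your first two paragraphs are essentially right. $\mathsf{AD}^+$ is Woodin's derived model theorem and needs nothing about the strategy predicate (that worry is a non-issue; excellence and generic interpretability are needed for the other two claims, not for $\mathsf{AD}^+$). The identity $\power(\mathbb{R})^{\mathsf{DM}}=\Gamma^*_g$ and $\mathsf{AD}_{\mathbb{R}}$ go exactly as you describe: term relations plus genericity iterations show every set of reals of $\mathsf{DM}$ is projective in some $\mathrm{Code}(\Sigma^g_{\mathcal{P}\vert\xi})$, these codes are Wadge-cofinal in $\Gamma^*_g$ and each is Suslin and co-Suslin in $\mathsf{DM}$ because there are Woodin cardinals of $\mathcal{V}$ between $\xi$ and $\delta$, and Woodin's equivalence then gives $\mathsf{AD}_{\mathbb{R}}$. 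This is the same route the paper itself takes for the analogous statement about $\mathsf{CDM}^+$ in \cref{sets_of_reals_in_CDM_plus}.

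The gap is in the regularity of $\Theta$, and it is twofold. First, $\Theta^{\mathsf{DM}}$ is \emph{not} the image of $\delta$ under the direct limit map: by \cref{value_of_Theta}, $\Theta^{\mathsf{DM}}=\kappa^{\mathcal{Q},\eta}_{\infty}$, the image of the least ${<}\delta$-strong cardinal above the relevant cutpoint, which lies strictly below the image of $\delta$ precisely in the case (a limit of ${<}\delta$-strong cardinals) that the paper cares most about. Second, there is no elementary embedding of $\mathcal{V}$, or of $\mathsf{DM}$, through which the witness $f$ could be ``pulled back to a cofinal map into $\delta$ inside $\mathcal{V}$'': $f$ lives in $\mathcal{V}[g]$ and is not in the range of any iteration map; what is preserved under iteration is only its \emph{definition}. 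The argument actually used in \cite{dm_of_self_it}, reproduced for $\mathsf{CDM}^+$ as \cref{determinacy_in_CDM_plus_2}, is a boundedness argument: reduce to the nonexistence of a cofinal $f\colon\mathbb{R}^*_g\to\Theta$; capture the definition of $f$ by a term relation over an iterate $\mathcal{R}[x,y,z]$; note that $f$ restricted to the reals of a bounded stage (a countable set in the model) is bounded below $\Theta$ by the Wadge rank of some strategy code $\mathrm{Code}(\Sigma^k_{\mathcal{R}\vert\eta''})$; check that this bound persists to further genericity iterates, which fix the definition of $f$, while an arbitrary real is absorbed into a bounded stage of such an iterate via the extender algebra. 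Regularity of $\delta$ enters at a point your sketch never touches: it is used to prove $\cf(\Theta)>\omega$ (countably many ${<}\delta$-uB sets are all captured below some $\xi<\delta$), and without $\cf(\Theta)>\omega$ the bounding ordinal $\eta''$ cannot be found. So the one step where the hypothesis does its work is missing from your outline, and the mechanism you propose in its place does not get off the ground.
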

\begin{proof}
Steel showed the first part as \cite[Theorem 11.3.2]{CPMP}.
In \cite{dm_of_self_it}, the first and the third author generalized his result to any self-iterable structure, and additionally showed that the derived model of a self-iterable structure at a regular limit of Woodin cardinals satisfies that $\Theta$ is regular.
\end{proof}

Now we proceed with defining the Chang model over derived model introduced by the third author in \cite{CoveringChang}.
To state the definition, we need more terminology and facts.
We define $I^*_g(\mathcal{P}, \Sigma)$ as the set of all non-dropping \footnote{We say that $\mathcal{Q}$ is a non-dropping iterate of $\mathcal{P}$ via $\mathcal{T}$ if the main branch of $\mathcal{T}$ does not drop.} $\Sigma$-iterates of $\mathcal{P}$ via an $(\omega, \delta+1)$-iteration tree $\mathcal{T}$ of $\mathcal{P}$ based on $\mathcal{P}\vert\delta$ \footnote{For an iteration tree $\mathcal{T}$ on $\mathcal{P}$, we say that $\mathcal{T}$ is based on $\mathcal{P}\vert\delta$ if it only uses extenders on the extender sequence of $\mathcal{P}\vert\delta$ and their images.} such that $\pi^{\mathcal{T}}(\delta)=\delta$ and $\mathcal{T}\in\mathcal{V}[g\uphar\xi]$ for some $\xi<\delta$.

Let $\mathcal{Q}\in I^*_g(\mathcal{P}, \Sigma)$.
Because $\Sigma$ (and its canonical extensions to generic extensions) admits full normalization, $\mathcal{Q}$ is a non-dropping normal $\Sigma$-iterate of $\mathcal{P}$.
So, let $\mathcal{T}_{\mathcal{P}, \mathcal{Q}}$ be a unique normal iteration tree of $\mathcal{P}$ via $\Sigma$ with last model $\mathcal{Q}$.
Note that the length of $\mathcal{T}_{\mathcal{P}, \mathcal{Q}}$ is at most $\delta+1$.
Let $\Sigma_{\mathcal{Q}}$ be the tail strategy $\Sigma_{\mathcal{Q}, \mathcal{T}_{\mathcal{P}, \mathcal{Q}}}$.
Since $\Sigma$ is positional, $\Sigma_{\mathcal{Q}}=\Sigma_{\mathcal{Q}, \mathcal{U}}$ for any $\Sigma$-iteration $\mathcal{U}$ of $\mathcal{P}$ leading to $\mathcal{Q}$.
Let $\pi_{\mathcal{P}, \mathcal{Q}}\colon\mathcal{P}\to\mathcal{Q}$ be the iteration map via $\mathcal{T}_{\mathcal{P}, \mathcal{Q}}$.
Moreover, since $\mathcal{V}$ does not project across $(\delta^+)^{\mathcal{V}}$, we can apply $\mathcal{T}_{\mathcal{P}, \mathcal{Q}}$ to $\mathcal{V}$ according to $\Omega$.
Then let $\mathcal{V}_{\mathcal{Q}}$ be the last model of $\mathcal{T}_{\mathcal{P}, \mathcal{Q}}$ when it is applied to $\mathcal{V}$.
It is not hard to check that $\mathcal{Q}=\mathcal{V}_{\mathcal{Q}}\vert(\delta^+)^{\mathcal{V}_{\mathcal{Q}}}$ and $\Sigma_{\mathcal{Q}}$ is determined by the strategy predicate of $\mathcal{V}_{\mathcal{Q}}$.

\begin{dfn}\label{dfn:internal_direct_limit_system}
For any $\mathcal{Q}\in I^*_{g}(\mathcal{P}, \Sigma)$ and any ordinal $\eta<\delta$\footnote{Recall that $\delta=\pi_{\mathcal{P}, \mathcal{Q}}(\delta)$ for any $\mathcal{Q}\in I^*_{g}(\mathcal{P}, \Sigma)$.}, we define
\[
\mathcal{F}^*_g(\mathcal{Q}, \eta)
\]
as the set of all non-dropping $\Sigma_{\mathcal{Q}}$-iterates $\mathcal{R}$ of $\mathcal{Q}$ such that $\lh(\mathcal{T}_{\mathcal{Q}, \mathcal{R}})<\delta$, $\mathcal{T}_{\mathcal{Q}, \mathcal{R}}$ is based on $\mathcal{P}\vert\delta$ and is above $\eta$\footnote{We say that an iteration tree is above $\eta$ if it uses only extenders with critical point $>\eta$.}, and $\mathcal{T}_{\mathcal{Q}, \mathcal{R}}\in\mathcal{V}[g\uphar\xi]$ for some $\xi<\delta$.
Since $\Sigma$ is directed, $\mathcal{F}^*_g(\mathcal{Q}, \eta)$ can be regarded as a direct limit system under iteration maps.
We also define
\[
\mathcal{M}_{\infty}(\mathcal{Q}, \eta)
\]
as the direct limit model of the system $\mathcal{F}^*_g(\mathcal{Q}, \eta)$.
For any $\mathcal{R}\in\mathcal{F}^*_g(\mathcal{Q}, \eta)$, let $\pi^{\mathcal{Q}, \eta}_{\mathcal{R}, \infty}\colon\mathcal{R}\to\mathcal{M}_{\infty}(\mathcal{Q}, \eta)$ be the direct limit map.
Let $\delta^{\mathcal{Q}, \eta}_{\infty}=\pi^{\mathcal{Q}, \eta}_{\mathcal{Q}, \infty}(\delta)$.
\end{dfn}

Let $\mathcal{Q}$ and $\eta$ be as in \cref{dfn:internal_direct_limit_system}.
Since any iteration tree based on $\mathcal{Q}\vert\delta$ can be applied to $\mathcal{V}_{\mathcal{Q}}$, we can similarly define a direct limit system $\mathcal{F}^*_g(\mathcal{V}_{\mathcal{Q}}, \eta)$, which consists of models $\mathcal{V}_{\mathcal{R}}$ and iteration maps $\pi_{\mathcal{V}_{\mathcal{R}}, \mathcal{V}_{\mathcal{R}^*}}$, where $\mathcal{R}, \mathcal{R}^*\in\mathcal{F}^*_g(\mathcal{Q}, \eta)$ are such that $\mathcal{R}^*$ is a non-dropping iterate of $\mathcal{R}$.
It is not hard to see that $\mathcal{V}_{\mathcal{M}_{\infty}(\mathcal{Q}, \eta)}$ is the direct limit model of $\mathcal{F}^*_g(\mathcal{V}_{\mathcal{Q}}, \eta)$.
For any $\mathcal{R}\in\mathcal{F}^*_g(\mathcal{Q}, \eta)$, let $\pi^{\mathcal{Q}, \eta}_{\mathcal{V}_{\mathcal{R}}, \infty}\colon\mathcal{V}_{\mathcal{R}}\to\mathcal{V}_{\mathcal{M}_{\infty}(\mathcal{Q}, \eta)}$ be the corresponding direct limit map which extends $\pi^{\mathcal{Q}, \eta}_{\mathcal{R}, \infty}\colon\mathcal{R}\to\mathcal{M}_{\infty}(\mathcal{Q}, \eta)$.

In \cite{CoveringChang}, the Chang model over the derived model (at $\delta$ computed in $\mathcal{V}[g]$) is defined by\footnote{In \cite{CoveringChang}, this model is denoted by $C(g)$.}
\[
\mathsf{CDM}=L(\mathcal{M}_{\infty}(\mathcal{P}, 0), \cup_{\alpha<\delta^{\mathcal{P}, 0}_{\infty}}{}^{\omega}\alpha, \Gamma^*_g, \mathbb{R}^*_g)
\]
and it is proved that $\mathsf{CDM}\models\mathsf{AD}^+$.
The main object we study in this paper is an extension of $\mathsf{CDM}$ and is introduced in the following definition.

\begin{dfn}\label{def_of_CDM_plus}
Let $\mathcal{Q}\in I^*_{g}(\mathcal{P}, \Sigma)$ and let $\eta<\delta$ be an ordinal.
We define
\[
\mathsf{CDM}(\mathcal{Q}, \eta)=L(\mathcal{M}_{\infty}(\mathcal{Q}, \eta), \cup_{\alpha<\delta^{\mathcal{Q}, \eta}_{\infty}}{}^{\omega}\alpha, \Gamma^*_g, \mathbb{R}^*_g).
\]
Moreover, let $\mu_{\alpha}$ be the club filter on $\power_{\omega_1}(\alpha)$ for any $\alpha<\delta^{\mathcal{Q}, \eta}_{\infty}$ and then let $\vec{\mu}=\{\langle\alpha, A\rangle\mid\alpha<\delta^{\mathcal{Q}, \eta}_{\infty}\land A\in\mu_{\alpha}\}$.
We define 
\[
\mathsf{CDM}^{+}(\mathcal{Q}, \eta)=L(\mathcal{M}_{\infty}(\mathcal{Q}, \eta), \cup_{\alpha<\delta^{\mathcal{Q}, \eta}_{\infty}}{}^{\omega}\alpha, \Gamma^*_g, \mathbb{R}^*_g)[\vec{\mu}].
\]
For any ordinal $\gamma$, we write $\mathsf{CDM}^{+}(\mathcal{Q}, \eta)\vert\gamma$ for the $\gamma$-th level of the $L$-hierarchy of $\mathsf{CDM}^+(\mathcal{Q}, \eta)$.
More precisely, for any ordinal $\gamma$,
\begin{align*}
    \mathsf{CDM}^+(\mathcal{Q}, \eta)\vert 0 &= \mathrm{trcl}(\{\mathcal{M}_{\infty}(\mathcal{Q}, \eta), \cup_{\alpha<\delta^{\mathcal{Q}, \eta}_{\infty}}{}^{\omega}\alpha, \Gamma^*_g, \mathbb{R}^*_g\}),\\
    \mathsf{CDM}^+(\mathcal{Q}, \eta)\vert\gamma+1 &= \mathrm{Def}(\mathsf{CDM}^+(\mathcal{Q}, \eta)\vert\gamma, {\in}, \vec{\mu}\cap\mathsf{CDM}^+(\mathcal{Q}, \eta)\vert\gamma),\\
    \mathsf{CDM}^+(\mathcal{Q}, \eta)\vert\gamma &= \bigcup_{\beta<\gamma}\mathsf{CDM}^+(\mathcal{Q}, \eta)\vert\beta\;\;\;\text{if $\gamma$ is limit},
\end{align*}
where $\mathrm{trcl}(A)$ denotes the transitive closure of $A$ and $\mathrm{Def}$ denotes the definable powerset operator.
Also, we define the language for $\mathsf{CDM}^+(\mathcal{Q}, \eta)$ as the language of set theory together with an additional unary predicate $\dot{\vec{\mu}}$.
$\mathsf{CDM}^+(\mathcal{Q}, \eta)\vert\gamma$ always interprets $\dot{\vec{\mu}}$ as $\vec{\mu}\cap\mathsf{CDM}^+(\mathcal{Q}, \eta)\vert\gamma$.
\end{dfn}

Note that we add the club filters on $\power_{\omega_1}(\xi)$, not on $\power_{\omega_1}({}^{\omega}\xi)$, which is different from Woodin's generalized Chang model $\mathsf{CM}^+$.
The reason for this will be explained in \cref{rem_on_def_of_CDM_plus}.
Our goal in this section is to show the following.

\begin{thm}\label{main_thm_1}
There are $\mathcal{Q}\in I^*_g(\mathcal{P}, \Sigma)$ and $\eta<\delta$ such that
\[
\mathsf{CDM}^{+}(\mathcal{Q}, \eta)\models\mathsf{ZF} + \mathsf{AD}^{+} + \mathsf{AD}_{\mathbb{R}} + \text{$\omega_1$ is ${<}\delta^{\mathcal{Q}, \eta}_{\infty}$-supercompact.}
\]
Moreover,
\begin{itemize}
\item if $\delta$ is regular in $\mathcal{V}$, then $\mathsf{CDM}^{+}(\mathcal{Q}, \eta)\models\mathsf{DC}+\Theta$ is regular.
\item if $\delta^{\mathcal{Q}, \eta}_{\infty}>\Theta$ in $\mathsf{CDM}^+(\mathcal{Q}, \eta)$, then
\[
\mathsf{CDM}^+(\mathcal{Q}, \eta)\models
\Theta\text{ is measurable}+\omega_1\text{ is } \power(\mathbb{R})\text{-supercompact.}
\]
\end{itemize}
\end{thm}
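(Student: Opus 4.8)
The plan is to build $\mathcal{Q}$ and $\eta$ so that $\mathsf{CDM}^+(\mathcal{Q},\eta)$ is \emph{reasonably closed under the internal direct limit system}, in the sense that for all $\alpha<\delta^{\mathcal{Q},\eta}_\infty$ the model correctly computes $\mathcal{M}_\infty(\mathcal{Q},\eta)$ and recognizes itself as generated by the $\mathcal{F}^*_g$-system together with $\bigcup_{\alpha}{}^\omega\alpha$ and the derived model $(\Gamma^*_g,\mathbb{R}^*_g)$. The existence of such $(\mathcal{Q},\eta)$ should follow by a standard reflection/stationary-correctness argument as in \cite{CoveringChang}: one runs the iterability and full normalization machinery to find a $\Sigma$-iterate $\mathcal{Q}$ that is ``stable'' in the relevant sense and an $\eta$ below the first Woodin of $\mathcal{Q}$ so that the tail system above $\eta$ sees all the hulls we need. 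Once this is arranged, $\mathsf{ZF}$ is immediate from the $L[\vec\mu]$-style construction, and $\mathsf{AD}^+$ together with $\mathsf{AD}_\mathbb{R}$ should transfer essentially as for $\mathsf{CDM}$ in \cite{CoveringChang}, using \cref{determinacy_in_DM} and the fact that adding the bounded $\omega$-sequences and the predicate $\vec\mu$ does not enlarge the reals (since $\mathbb{R}^{\mathsf{CDM}^+(\mathcal{Q},\eta)}=\mathbb{R}^*_g$, as all the club filters and direct limit maps are already coded into $\Gamma^*_g$ and the $\mathcal{M}_\infty$-structure). Here one wants to verify that the new predicate does not inject a wellordering of the reals, which is where the \emph{club} filter (rather than an arbitrary ultrafilter) and the absoluteness of $\mu_\alpha$ to the model are essential.

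The heart of the argument is the supercompactness of $\omega_1$ up to $\delta^{\mathcal{Q},\eta}_\infty$. For each $\alpha<\delta^{\mathcal{Q},\eta}_\infty$ I would show that $\mu_\alpha$, the club filter on $\power_{\omega_1}(\alpha)$ as computed in $\mathsf{CDM}^+(\mathcal{Q},\eta)$, is actually a countably complete normal fine \emph{ultrafilter} there. Countable completeness, fineness, and normality of the club filter are automatic; the content is the ultrafilter property. The strategy should be a ``derived-model-style'' argument: represent $\alpha$ as $\pi^{\mathcal{Q},\eta}_{\mathcal{R},\infty}$-image of a point in some $\mathcal{R}\in\mathcal{F}^*_g(\mathcal{Q},\eta)$, and given a set $A\subset\power_{\omega_1}(\alpha)$ in $\mathsf{CDM}^+(\mathcal{Q},\eta)$, use genericity iterations (the analogue of Woodin's genericity iterations powering the derived model theorem, cf.\ the remark after \cref{genericity_iteration} promised in the paper) to show that a club's worth of countable $\sigma\subset\alpha$ are of the form $\pi^{\mathcal{Q},\eta}_{\mathcal{R},\infty}[\,\mathcal{R}\cap\mathrm{something}\,]$ for a \emph{correctly chosen} countable iterate $\mathcal{R}$, and that membership in $A$ is decided uniformly along such $\sigma$. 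Concretely: a countable $\sigma$ is in the generic club iff it is the set of critical points / image-points coming from a countable hull $H\prec\mathsf{CDM}^+(\mathcal{Q},\eta)$ with $H\cap\delta^{\mathcal{Q},\eta}_\infty$ realized as an iterate; then $\sigma\in A$ or $\sigma\notin A$ is forced by the theory of $H$, and by homogeneity of $\mathrm{Col}(\omega,{<}\delta)$ one of the two outcomes occurs on a club. This is the same mechanism by which $\mathsf{AD}_\mathbb{R}$ yields $\mathbb{R}$-supercompactness of $\omega_1$ in \cite{So21}, transported through the hod-mouse direct limit system in the manner of Woodin–Neeman \cite{Neeman07}. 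The main obstacle I anticipate is precisely making the ``generic club of correctly realized hulls'' rigorous: one must show that for a club of $\sigma\in\power_{\omega_1}(\alpha)$ there is a genuine countable $\Sigma_{\mathcal{Q}}$-iterate $\mathcal{R}$ (lying in some $\mathcal{V}[g\uphar\xi]$) whose image under $\pi^{\mathcal{Q},\eta}_{\mathcal{R},\infty}$ generates $\sigma$, and that the choice of $\mathcal{R}$ can be made generically absolute so that the decision ``$\sigma\in A$?'' is stable — this requires the full normalization, positionality, directedness, and segmental normality clauses of excellence, plus generic interpretability, and getting the fine bookkeeping of which iteration trees are in which $\mathcal{V}[g\uphar\xi]$ right is delicate.

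Finally, for the ``Moreover'' clauses: if $\delta$ is regular in $\mathcal{V}$, then by \cref{determinacy_in_DM} $\mathsf{DM}\models\Theta\mathrm{reg}$, and the argument of \cite{dm_of_self_it} should localize to show $\mathsf{CDM}^+(\mathcal{Q},\eta)\models\Theta$ is regular; $\mathsf{DC}$ follows from Ikegami–Trang \cite{spct_of_omega_1} once we have $\omega_1$ supercompact past $\Theta$ (note $\delta^{\mathcal{Q},\eta}_\infty\geq\Theta$ always), or more directly from the stationary-tower-style closure of the construction. For the last bullet, assume $\delta^{\mathcal{Q},\eta}_\infty>\Theta$ in $\mathsf{CDM}^+(\mathcal{Q},\eta)$. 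Then $\omega_1$ is ${<}\delta^{\mathcal{Q},\eta}_\infty$-supercompact with $\delta^{\mathcal{Q},\eta}_\infty>\Theta$, which immediately gives $\omega_1$ is $\power(\mathbb{R})$-supercompact, since $\power(\mathbb{R})$ injects into $\delta^{\mathcal{Q},\eta}_\infty$ inside the model (as $\Theta$ surjects onto $\power(\mathbb{R})$ under $\mathsf{AD}^+$, and supercompact measures transfer along surjections). Measurability of $\Theta$ should then follow as in Trang's equiconsistency results \cite{Tr13,EquiSpctMeas}: the supercompact measure on $\power_{\omega_1}(\Theta)$ (or on $\power_{\omega_1}(\alpha)$ for $\Theta<\alpha<\delta^{\mathcal{Q},\eta}_\infty$) projects, via $\sigma\mapsto\sup(\sigma)$, to a countably complete uniform ultrafilter on $\Theta$; using $\Theta\mathrm{reg}$ and normality one upgrades this to a genuine normal measure on $\Theta$. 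I expect the first bullet ($\Theta$ regular, $\mathsf{DC}$) to be routine given \cite{dm_of_self_it}, and the second bullet to be short given the supercompactness already established; the real work, as noted, is the supercompactness argument itself.
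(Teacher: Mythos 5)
Your outline of the central supercompactness argument does match the paper's mechanism: the measure-one sets are the sets $C_{\mathcal{R},\xi}$ of ``realized hulls'' $\sigma_{\mathcal{R}^*,\xi}=\mathrm{ran}(\pi^{\mathcal{R},\xi}_{\mathcal{R}^*,\infty})\cap\alpha$, these contain a club, and membership of such $\sigma$ in a given $A$ is decided uniformly by elementarity of the iteration maps plus homogeneity of $\mathrm{Col}(\omega,{<}\delta)$. But the single idea that makes this rigorous is absent from your sketch: $(\mathcal{Q},\eta)$ must be chosen to \emph{stabilize} $\delta_{\infty}$, i.e.\ $\delta^{\mathcal{R},\xi}_{\infty}=\delta^{\mathcal{Q},\eta}_{\infty}$ for every further genericity iterate $\mathcal{R}$ above $\eta$ and every $\xi\in[\eta,\delta)$. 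This is obtained not by a reflection or stationary-correctness argument but by the trivial observation that otherwise one builds an infinite descending sequence of ordinals (\cref{stabilizing_delta_infty}). Stabilization is exactly what upgrades \cref{locality_of_CDM} to the $\vec{\mu}$-hierarchy (\cref{locality_of_CDM_plus}, proved by induction on the levels $\mathsf{CDM}^+(\mathcal{Q},\eta)\vert\gamma$): it guarantees that $\alpha$ has a preimage under $\pi^{\mathcal{R},\xi}_{\mathcal{R}^*,\infty}$ in the relevant iterates and that the predicate $\vec{\mu}$ is computed identically in $\mathcal{V}_{\mathcal{Q}'}[h']$, without which the dichotomy for sets $A$ in the model cannot be run. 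Your ``$\eta$ below the first Woodin of $\mathcal{Q}$'' is not the right condition.

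The second bullet is where your proposal genuinely fails. Under $\mathsf{AD}$ no ordinal surjects onto $\power(\mathbb{R})$ and $\power(\mathbb{R})$ injects into no ordinal (either would wellorder $\mathbb{R}$), so you cannot transfer a supercompact measure on $\power_{\omega_1}(\alpha)$ for $\alpha<\delta^{\mathcal{Q},\eta}_{\infty}$ to one on $\power_{\omega_1}(\power(\mathbb{R}))$ by pushing along a surjection; that step is simply false. The correct order is the reverse of yours: first prove $\Theta$ is measurable, by projecting the realized hulls via $\sigma\mapsto\sup(\sigma)$ to show the club filter on $\Theta\cap\mathrm{Cof}(\omega)$ restricted to the model is an ultrafilter, and then separately establish $\mathbb{R}$-completeness (part of what ``$\Theta$ is measurable'' means here, omitted in your sketch) via the $\Theta$-completeness plus no-Wadge-cofinal-function argument of \cite{PmaxSquare}; only then does Trang's theorem give $\power(\mathbb{R})$-supercompactness by integrating the club filters on $\power_{\omega_1}(\Gamma_{\alpha})$ over the normal measure on $\Theta$. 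Your route to $\mathsf{DC}$ is also defective: Ikegami--Trang requires $\omega_1$ to be supercompact for all sets, whereas here one only has measures on $\power_{\omega_1}(\alpha)$ for \emph{ordinals} $\alpha<\delta^{\mathcal{Q},\eta}_{\infty}$ (the paper explicitly leaves ${}^{\omega}\alpha$-supercompactness open), and in the first bullet one does not even have $\delta^{\mathcal{Q},\eta}_{\infty}>\Theta$. The paper instead derives $\cf(\Theta)>\omega$ from regularity of $\delta$, applies Solovay to get $\mathsf{DC}_{\power(\mathbb{R})}$, and reduces the remainder of $\mathsf{DC}$ to $\mathsf{DC}_X$ for $X=\bigcup_{\xi<\delta^{\mathcal{Q},\eta}_{\infty}}{}^{\omega}\xi$ using $\cf(\delta^{\mathcal{Q},\eta}_{\infty})=\cf(\delta)>\omega$ in $\mathcal{V}[g]$.
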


We will prove ${<}\delta^{\mathcal{Q}, \eta}_{\infty}$-supercompactness of $\omega_1$ as \cref{spct_meas_in_CDM_plus} in Subsection 2.3, $\mathsf{AD}^+ +\mathsf{AD}_{\mathbb{R}}$ as \cref{determinacy_in_CDM_plus_1} in Subsection 2.4, 
$\mathsf{DC}$ and regularity of $\Theta$ as \cref{determinacy_in_CDM_plus_2} in Subsection 2.5, and measurability of $\Theta$ and $\power(\mathbb{R})$-supercompactness of $\omega_1$ as \cref{Theta_measurable,P(R)-supercompactness} in Subsection 2.6.

\subsection{Genericity iterations}

We first need to introduce genericity iterations in our context and recall several lemmas proved in \cite{CoveringChang}.
Let $\mathcal{M}$ be an lbr hod premouse.
Then we say that an open interval of ordinals $(\eta, \delta)$ is a \emph{window of $\mathcal{M}$} if in $\mathcal{M}$, $\eta$ is an inaccessible cardinal and $\delta$ is the least Woodin cardinal above $\eta$ in $\mathcal{M}$.
For any iteration tree $\mathcal{T}$ on $\mathcal{M}$, we say that \emph{$\mathcal{T}$ is based on a window $(\eta, \delta)$} if it is based on $\mathcal{M}\vert\delta$ and is above $\eta$, i.e., $\mathcal{T}$ uses only extenders on the extender sequence of $\mathcal{M}\vert\delta$ with critical point $>\eta$ and their images.
Also, a sequence $\langle w_{\alpha}\mid\alpha<\lambda\rangle$ of windows of $\mathcal{M}$ is \emph{increasing} if whenever $\alpha<\beta$, $\sup(w_{\alpha})\leq\inf(w_{\beta})$.

\begin{dfn}
Let $\mathcal{Q}\in I^*_g(\mathcal{P}, \Sigma)$ and let $\mathcal{R}\in I^*_g(\mathcal{Q}, \Sigma_{\mathcal{Q}})$.
We say that $\mathcal{R}$ is a \emph{window-based iterate of $\mathcal{Q}$} if there is $\xi<\delta$ such that $\mathcal{R}\in\mathcal{V}[g\uphar\xi]$, an increasing sequence of windows $\langle w_{\alpha}\mid\alpha<\cf(\delta)\rangle$ of $\mathcal{R}$ and a sequence $\langle\mathcal{R}_{\alpha}\mid\alpha\leq\cf(\delta)\rangle$ of lbr hod premice in $\mathcal{V}[g\uphar\xi]$ such that
\begin{enumerate}
    \item $\delta=\sup\{\sup(w_{\alpha})\mid\alpha<\cf(\delta)\}$.
    \item $\mathcal{R}_0$ is a non-dropping iterate of $\mathcal{Q}$ based on $\mathcal{Q}\vert\inf(w_0)$.
    \item $\mathcal{R}_{\alpha+1}$ is a non-dropping iterate of $\mathcal{R}_{\alpha}$ based on a window $\pi_{\mathcal{Q}, \mathcal{R}_{\alpha}}(w_{\alpha})$.
    \item for any limit ordinal $\lambda\leq\cf(\delta)$, $\mathcal{R}_{\lambda}$ is the direct limit of $\langle\mathcal{R}_{\alpha}, \pi_{\mathcal{R}_{\alpha}, \mathcal{R}_{\beta}}\mid\alpha<\beta<\lambda\rangle$.
    \item $\mathcal{R}=\mathcal{R}_{\cf(\delta)}$.
\end{enumerate}
\end{dfn}

Let $\mathcal{M}$ be an lbr hod premouse.
An extender $E\in\vec{E}^{\mathcal{M}}$ is called \emph{nice} if the supremum of the generators of $E$ is an inaccessible cardinal in $\mathcal{M}$.
For any window $w=(\eta, \delta)$ of $\mathcal{R}$, let $\mathsf{EA}^{\mathcal{M}}_{(\eta, \delta)}$ be Woodin's extender algebra with $\omega$ generators at $\delta$ in $\mathcal{M}$ that only uses nice extenders $E$ such that $\crit(E)>\eta$, see \cite{Farah} and \cite{OIMT}.

\begin{dfn}\label{genericity_iteration}
Let $\mathcal{Q}\in I^*_g(\mathcal{P}, \Sigma)$ and let $\mathcal{R}\in I^*_g(\mathcal{Q}, \Sigma_{\mathcal{Q}})$.
We say that $\mathcal{R}$ is a \emph{genericity iterate of $\mathcal{Q}$} if it is a window-based iterate of $\mathcal{Q}$ as witnessed by $\langle w_{\alpha}\mid\alpha<\cf(\delta)\rangle$ such that
\begin{enumerate}
    \item for any $x\in\mathbb{R}^{\mathcal{P}[g]}$, there is an $\alpha<\delta$ such that $x$ is $\mathsf{EA}^{\mathcal{R}}_{\pi_{\mathcal{Q}, \mathcal{R}}(w_{\alpha})}$-generic over $\mathcal{R}$, and
    \item for any $\alpha<\cf(\delta)$, $w_{\alpha}\in\mathrm{ran}(\pi_{\mathcal{Q}, \mathcal{R}})$.
\end{enumerate}
We say that $\mathcal{R}$ is a \emph{genericity iterate of $\mathcal{Q}$ above $\eta$} if it is a genericity iterate of $\mathcal{Q}$ witnessed by $\langle w_{\alpha}\mid\alpha<\cf(\delta)\rangle$ and $\langle\mathcal{R}_{\alpha}\mid\alpha\leq\cf(\delta)\rangle$ such that $\inf(w_0)\geq\eta$.
\end{dfn}

In \cite{CoveringChang}, a genericity iteration is required to be strongly non-dropping, or use only nice extenders.
This condition is actually redundant, so we omit it from \cref{genericity_iteration}.
The following lemma is a restatement of \cite[Propositions 3.3 and 3.4]{CoveringChang}.

\begin{lem}\label{easy_lemma}
Let $\eta<\delta$. Then the following hold.
\begin{enumerate}
    \item For any $\mathcal{P}^*\in\mathcal{F}^*_{g}(\mathcal{P}, \eta)$ and any $\eta'<\delta$, there is $\mathcal{Q}\in I^*_{g}(\mathcal{P}^*, \Sigma_{\mathcal{P}^*})$ such that $\mathcal{Q}$ is a genericity iterate of $\mathcal{P}$, $\crit(\pi_{\mathcal{P}^*, \mathcal{Q}})>\eta'$, and $\mathcal{T}_{\mathcal{P}, \mathcal{P}^*}{}^{\frown}\mathcal{T}_{\mathcal{P}^*, \mathcal{Q}}$ is a normal iteration tree.
    \item If $\mathcal{Q}$ is a genericity iterate of $\mathcal{P}$ above $\eta$ and $\mathcal{R}$ is a genericty iterate of $\mathcal{Q}$ above $\eta$, then $\mathcal{R}$ is a genericity iterate of $\mathcal{P}$ above $\eta$.
\end{enumerate}
\end{lem}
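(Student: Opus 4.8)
The plan is to treat the two parts of \cref{easy_lemma} essentially as bookkeeping corollaries of the existence of genericity iterations together with the closure properties of the direct limit systems $\mathcal{F}^*_g$, using the absorption of $\mathbb{R}^{\mathcal{P}[g]}$ into $\mathcal{R}[g\uphar\xi]$ along an increasing sequence of windows. For part (1), I would proceed as follows. Fix $\mathcal{P}^*\in\mathcal{F}^*_g(\mathcal{P},\eta)$ and $\eta'<\delta$. Working in $\mathcal{V}[g\uphar\xi_0]$ for a suitable $\xi_0<\delta$ containing $\mathcal{P}^*$, first do a single small iteration of $\mathcal{P}^*$ above $\max(\eta,\eta')$ to reach some $\mathcal{P}^{**}$ with $\crit(\pi_{\mathcal{P}^*,\mathcal{P}^{**}})>\eta'$ and with a designated inaccessible $\eta_0>\eta'$ of $\mathcal{P}^{**}$ lying in the range of $\pi_{\mathcal{P},\mathcal{P}^{**}}$; this is needed so that $\inf(w_0)$ can be chosen $>\eta'$ and in the range of the iteration map, which will be part (2) of \cref{genericity_iteration}. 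Then build an increasing sequence of windows $\langle w_\alpha\mid\alpha<\cf(\delta)\rangle$ of the iterate being constructed, where at stage $\alpha$ we pass to the extender algebra $\mathsf{EA}^{\mathcal{R}_\alpha}_{\pi(w_\alpha)}$ and genericity-iterate so as to make the $\alpha$-th real of $\mathbb{R}^{\mathcal{P}[g]}$ (in some fixed enumeration of length $\delta$, available since $\mathbb{R}^{\mathcal{P}[g]}=\mathbb{R}^*_g$ is a union of $\mathbb{R}^{\mathcal{V}[g\uphar\alpha]}$) generic; at limits take direct limits. Requiring each $w_\alpha\in\mathrm{ran}(\pi_{\mathcal{Q},\mathcal{R}_\alpha})$ is arranged by always choosing the window whose endpoints are images of ordinals of $\mathcal{P}$. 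The resulting $\mathcal{Q}=\mathcal{R}_{\cf(\delta)}$ is then a genericity iterate of $\mathcal{P}$ (via $\mathcal{P}^{**}$) by construction, $\crit(\pi_{\mathcal{P}^*,\mathcal{Q}})>\eta'$, and normality of $\mathcal{T}_{\mathcal{P},\mathcal{P}^*}{}^\frown\mathcal{T}_{\mathcal{P}^*,\mathcal{Q}}$ follows because full normalization is available (excellence) and each stage of the genericity iteration can be taken above the sup of generators used at the previous stage, so the concatenation is already length-increasing in the relevant sense; if not literally normal one applies the normalization map guaranteed by excellence and checks it fixes the relevant witnessing data.

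For part (2), suppose $\mathcal{Q}$ is a genericity iterate of $\mathcal{P}$ above $\eta$, witnessed by $\langle w_\alpha\mid\alpha<\cf(\delta)\rangle$ and $\langle\mathcal{R}_\alpha\mid\alpha\leq\cf(\delta)\rangle$, and $\mathcal{R}$ is a genericity iterate of $\mathcal{Q}$ above $\eta$, witnessed by $\langle v_\beta\mid\beta<\cf(\delta)\rangle$ and $\langle\mathcal{S}_\beta\mid\beta\leq\cf(\delta)\rangle$. I would form the concatenated sequence of models $\langle\mathcal{R}_\alpha\mid\alpha\leq\cf(\delta)\rangle$ followed by $\langle\mathcal{S}_\beta\mid\beta\leq\cf(\delta)\rangle$ and the concatenated window sequence $\langle w_\alpha\mid\alpha<\cf(\delta)\rangle{}^\frown\langle v_\beta\mid\beta<\cf(\delta)\rangle$, reindexed by an ordinal of cofinality $\cf(\delta)$ — here one uses that a union of two increasing $\cf(\delta)$-sequences of windows cofinal in $\delta$ can be thinned/reindexed to a single increasing $\cf(\delta)$-sequence of windows cofinal in $\delta$, since $\delta=\sup\{\sup(w_\alpha)\}=\sup\{\sup(v_\beta)\}$ already from $\mathcal{Q}$ alone. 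Conditions (1)--(5) of being a window-based iterate transfer directly: $\mathcal{R}$ itself is the last model, the intermediate models are the $\mathcal{R}_\alpha$'s and $\mathcal{S}_\beta$'s, and each is based on the prescribed window. For \cref{genericity_iteration}(1), every $x\in\mathbb{R}^{\mathcal{P}[g]}$ is already $\mathsf{EA}^{\mathcal{Q}}_{\pi_{\mathcal{P},\mathcal{Q}}(w_\alpha)}$-generic over $\mathcal{Q}$ for some $\alpha$; since $\pi_{\mathcal{Q},\mathcal{R}}$ is above $\eta$ and in particular fixes the relevant window (as $w_\alpha$ was in the range of $\pi_{\mathcal{P},\mathcal{Q}}$ by (2) for $\mathcal{Q}$, hence its image behaves well under $\pi_{\mathcal{Q},\mathcal{R}}$), genericity over $\mathcal{Q}$ lifts to genericity over $\mathcal{R}$ by the usual fact that the extender algebra is preserved under iterations above its window and iteration maps send the extender-algebra relation to the extender-algebra relation; alternatively, one can just note that the $\mathcal{R}$-side genericity iteration already absorbs all of $\mathbb{R}^{\mathcal{P}[g]}=\mathbb{R}^{\mathcal{Q}[g]}$ by hypothesis, so (1) holds outright for the concatenated sequence along the $v_\beta$ portion. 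For \cref{genericity_iteration}(2), each $w_\alpha$ is in $\mathrm{ran}(\pi_{\mathcal{P},\mathcal{R}})=\mathrm{ran}(\pi_{\mathcal{Q},\mathcal{R}}\circ\pi_{\mathcal{P},\mathcal{Q}})$ because $w_\alpha\in\mathrm{ran}(\pi_{\mathcal{P},\mathcal{Q}})$ and $\pi_{\mathcal{Q},\mathcal{R}}$, being above $\eta\leq\inf(w_0)$, is the identity on the ordinals below $\sup(w_\alpha)$ for $\alpha$ small, and more generally sends the defining ordinals of $w_\alpha$ into its range; and each $v_\beta$ is in $\mathrm{ran}(\pi_{\mathcal{Q},\mathcal{R}})\subseteq\mathrm{ran}(\pi_{\mathcal{P},\mathcal{R}})$ by the corresponding clause for the $\mathcal{R}$-side iteration. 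Finally, $\mathcal{R}\in\mathcal{V}[g\uphar\xi]$ for some $\xi<\delta$ since both iterations live in such a generic extension and $\cf(\delta)<\delta$ many stages are involved.

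The main obstacle I expect is the reindexing and compatibility of the two window sequences in part (2): one has to check that interleaving the $w_\alpha$'s and $v_\beta$'s actually yields a legitimate increasing sequence of windows of $\mathcal{R}$ (not of $\mathcal{Q}$) that is still cofinal in $\delta$, and that the images of the $\mathcal{Q}$-side windows under $\pi_{\mathcal{Q},\mathcal{R}}$ are still windows with the genericity absorption intact — this is where the hypothesis ``above $\eta$'' on the $\mathcal{R}$-side iteration is essential, since it guarantees $\pi_{\mathcal{Q},\mathcal{R}}$ does not disturb the low part of $\mathcal{Q}$ where the early windows and their extender algebras live. A secondary technical point is verifying normality/agreement of strategies along the concatenated tree, but that is handled uniformly by the segmental normality and positionality clauses in the definition of excellence, exactly as in the remark following \cref{genericity_iteration} explaining why the strong-non-dropping and niceness restrictions of \cite{CoveringChang} can be dropped. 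Since \cite[Propositions 3.3 and 3.4]{CoveringChang} already prove the analogous statements under the more restrictive setup, the bulk of the work here is checking that the proofs go through verbatim after removing those restrictions, which they do because the extender algebra $\mathsf{EA}^{\mathcal{M}}_{(\eta,\delta)}$ as defined still only uses nice extenders and the genericity-iteration construction only ever needs to call for nice extenders on the $\mathcal{M}\vert\delta$ side.
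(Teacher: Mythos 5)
The paper does not actually prove \cref{easy_lemma}; it is imported wholesale from \cite[Propositions 3.3 and 3.4]{CoveringChang}, so your proposal has to be judged on its own merits. Part (1) follows the standard genericity-iteration template and is essentially right in outline, with one correction: to make $\mathcal{T}_{\mathcal{P},\mathcal{P}^*}{}^{\frown}\mathcal{T}_{\mathcal{P}^*,\mathcal{Q}}$ literally normal you must start the new iteration above the supremum of the lengths of the extenders used in $\mathcal{T}_{\mathcal{P},\mathcal{P}^*}$ (which is $<\delta$ since $\lh(\mathcal{T}_{\mathcal{P},\mathcal{P}^*})<\delta$), not merely above $\max(\eta,\eta')$. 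Your fallback of ``applying the normalization map afterwards'' does not salvage this, because the lemma asserts that the concatenation itself is normal, and full normalization would replace it by a different tree.

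Part (2) contains a genuine error and a genuine gap. The error: you write $\mathrm{ran}(\pi_{\mathcal{Q},\mathcal{R}})\subseteq\mathrm{ran}(\pi_{\mathcal{P},\mathcal{R}})$, but the inclusion goes the other way, since $\mathrm{ran}(\pi_{\mathcal{P},\mathcal{R}})=\pi_{\mathcal{Q},\mathcal{R}}[\mathrm{ran}(\pi_{\mathcal{P},\mathcal{Q}})]\subseteq\mathrm{ran}(\pi_{\mathcal{Q},\mathcal{R}})$. Consequently, knowing $v_{\beta}\in\mathrm{ran}(\pi_{\mathcal{Q},\mathcal{R}})$ does not give $v_{\beta}\in\mathrm{ran}(\pi_{\mathcal{P},\mathcal{R}})$, which is exactly what clause (2) of \cref{genericity_iteration} demands of the witnessing windows for $\mathcal{R}$ over $\mathcal{P}$ --- and those are precisely the windows at which the genericity clause is actually witnessed, since your other route (that genericity at $w_{\alpha}$ over $\mathcal{Q}$ ``lifts'' to genericity at $\pi_{\mathcal{Q},\mathcal{R}}(w_{\alpha})$ over $\mathcal{R}$) fails in general: the iteration $\mathcal{T}_{\mathcal{Q},\mathcal{R}}$ acts inside the $w$-windows and can introduce extenders whose induced axioms a given real violates. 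The gap: ``concatenate the two window sequences and reindex'' cannot produce a witnessing sequence, because each sequence is already cofinal in $\delta$ and the second iteration returns to windows just above $\eta$ after the first has gone cofinally high, so the composite in that order is not a window-based iteration at all. What is needed is a genuine rearrangement: merge the two families into a single increasing sequence of pairwise disjoint windows (using that two windows with distinct Woodin tops are disjoint), and at each window in the merged sequence perform first the $\mathcal{T}_{\mathcal{P},\mathcal{Q}}$-piece and then the $\mathcal{T}_{\mathcal{Q},\mathcal{R}}$-piece based on it, justifying the commutation of pieces based on disjoint windows via full normalization and segmental normality. You correctly identify this interleaving as ``the main obstacle,'' but flagging it is not the same as resolving it, and the construction you actually propose does not do so.
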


The proof of \cite[Theorem 3.8]{CoveringChang} shows the following.

\begin{lem}\label{pres_of_M_infty}
    Let $\mathcal{Q}\in I^*_g(\mathcal{P}, \Sigma)$ and $\eta<\delta$.
    If $\mathcal{R}$ is any genericity iterate of $\mathcal{Q}$ above $\eta$, then
    \[
    \mathcal{M}_{\infty}(\mathcal{Q}, \eta)=\mathcal{M}_{\infty}(\mathcal{R}, \eta).
    \]
    Moreover, $\pi^{\mathcal{Q}, \eta}_{\mathcal{Q}, \infty}=\pi^{\mathcal{R}, \eta}_{\mathcal{R}, \infty}\circ\pi_{\mathcal{Q}, \mathcal{R}}$.
    In particular, $\delta^{\mathcal{Q}, \eta}_{\infty}=\delta^{\mathcal{R}, \eta}_{\infty}$.
\end{lem}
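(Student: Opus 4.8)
The plan is to identify both direct limit models with the direct limit of an auxiliary directed system, namely the system $\mathcal{G}(\mathcal{Q}, \eta)$ of all genericity iterates of $\mathcal{Q}$ above $\eta$, ordered by the (canonical, by positionality of $\Sigma$) iteration maps. Once $\mathcal{M}_{\infty}(\mathcal{Q}, \eta) = \mathcal{M}_{\infty}(\mathcal{R}, \eta)$ is established together with the identity $\pi^{\mathcal{Q}, \eta}_{\mathcal{Q}, \infty} = \pi^{\mathcal{R}, \eta}_{\mathcal{R}, \infty} \circ \pi_{\mathcal{Q}, \mathcal{R}}$, the last clause is immediate, since $\delta^{\mathcal{Q}, \eta}_{\infty} = \pi^{\mathcal{Q}, \eta}_{\mathcal{Q}, \infty}(\delta) = \pi^{\mathcal{R}, \eta}_{\mathcal{R}, \infty}(\pi_{\mathcal{Q}, \mathcal{R}}(\delta)) = \pi^{\mathcal{R}, \eta}_{\mathcal{R}, \infty}(\delta) = \delta^{\mathcal{R}, \eta}_{\infty}$, using that $\mathcal{R} \in I^*_g(\mathcal{Q}, \Sigma_{\mathcal{Q}})$ so that $\pi_{\mathcal{Q}, \mathcal{R}}(\delta) = \delta$.

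The first and main step is to show that $\mathcal{G}(\mathcal{Q}, \eta)$ is directed and that its direct limit is exactly $\mathcal{M}_{\infty}(\mathcal{Q}, \eta)$, with direct limit maps $\sigma^{\mathcal{Q}, \eta}_{\mathcal{S}', \infty}$ satisfying $\pi^{\mathcal{Q}, \eta}_{\mathcal{S}, \infty} = \sigma^{\mathcal{Q}, \eta}_{\mathcal{S}', \infty} \circ \pi_{\mathcal{S}, \mathcal{S}'}$ whenever $\mathcal{S} \in \mathcal{F}^*_g(\mathcal{Q}, \eta)$ lies on the main branch of $\mathcal{T}_{\mathcal{Q}, \mathcal{S}'}$ for some $\mathcal{S}' \in \mathcal{G}(\mathcal{Q}, \eta)$ (in particular for $\mathcal{S} = \mathcal{Q}$). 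Directedness of $\mathcal{G}(\mathcal{Q}, \eta)$ follows by taking, for two given genericity iterates of $\mathcal{Q}$ above $\eta$, a common $\Sigma_{\mathcal{Q}}$-iterate above $\eta$ by directedness of $\Sigma$ and then extending it to a genericity iterate above $\eta$ via (the relativization to $\mathcal{Q}$ of) \cref{easy_lemma}(1), which also keeps the composed trees normal. For the identification of the direct limits one uses: (i) given $\mathcal{S} \in \mathcal{F}^*_g(\mathcal{Q}, \eta)$, since $\lh(\mathcal{T}_{\mathcal{Q}, \mathcal{S}}) < \delta$ one may choose a window $w_0$ of $\mathcal{S}$ with $\inf(w_0) \geq \eta$ large enough that $\mathcal{S}$ is a non-dropping iterate of $\mathcal{Q}$ based on $\mathcal{Q}\vert\inf(w_0)$, hence $\mathcal{S}$ can serve as the first window-boundary model of a genericity iterate $\mathcal{S}' \in \mathcal{G}(\mathcal{Q}, \eta)$ obtained by continuing via \cref{easy_lemma}(1) --- and then $\mathcal{S}$ lies on the main branch of $\mathcal{T}_{\mathcal{Q}, \mathcal{S}'}$; and (ii) conversely, each $\mathcal{S}' \in \mathcal{G}(\mathcal{Q}, \eta)$ is the direct limit of the increasing chain $\langle \mathcal{S}'_{\alpha} \mid \alpha < \cf(\delta) \rangle$ of its window-boundary models, each of which lies in $\mathcal{F}^*_g(\mathcal{Q}, \eta)$: the iteration tree from $\mathcal{Q}$ to $\mathcal{S}'_{\alpha}$ has length $<\delta$, is above $\eta$ (as $\inf(w_0) \geq \eta$), is based on $\mathcal{P}\vert\delta$, and lies in some $\mathcal{V}[g\uphar\xi]$, where one uses that $\delta$ is a limit cardinal and $\pi_{\mathcal{Q}, \mathcal{S}'}(\delta) = \delta$. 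Facts (i) and (ii), together with positionality, yield mutually inverse elementary maps between $\varinjlim \mathcal{G}(\mathcal{Q}, \eta)$ and $\mathcal{M}_{\infty}(\mathcal{Q}, \eta)$ and the stated relation among their cones.

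The second step is then short. Let $\mathcal{R}$ be a genericity iterate of $\mathcal{Q}$ above $\eta$. By \cref{easy_lemma}(2), every genericity iterate of $\mathcal{R}$ above $\eta$ is a genericity iterate of $\mathcal{Q}$ above $\eta$, so $\mathcal{G}(\mathcal{R}, \eta) \subseteq \mathcal{G}(\mathcal{Q}, \eta)$; and $\mathcal{G}(\mathcal{R}, \eta)$ is cofinal in $\mathcal{G}(\mathcal{Q}, \eta)$, since given $\mathcal{S} \in \mathcal{G}(\mathcal{Q}, \eta)$ one takes a common $\Sigma_{\mathcal{Q}}$-iterate of $\mathcal{S}$ and $\mathcal{R}$ above $\eta$ (directedness of $\Sigma$) and extends it to a genericity iterate of $\mathcal{R}$ above $\eta$ by \cref{easy_lemma}(1). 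Since a cofinal subsystem has the same direct limit, applying the first step to both $\mathcal{Q}$ and $\mathcal{R}$ gives $\mathcal{M}_{\infty}(\mathcal{Q}, \eta) = \varinjlim \mathcal{G}(\mathcal{Q}, \eta) = \varinjlim \mathcal{G}(\mathcal{R}, \eta) = \mathcal{M}_{\infty}(\mathcal{R}, \eta)$, with $\sigma^{\mathcal{Q}, \eta}_{\mathcal{W}, \infty} = \sigma^{\mathcal{R}, \eta}_{\mathcal{W}, \infty}$ for $\mathcal{W} \in \mathcal{G}(\mathcal{R}, \eta)$. Fixing such a $\mathcal{W}$, and using that $\mathcal{Q}$ and $\mathcal{R}$ both lie on the main branch of $\mathcal{T}_{\mathcal{Q}, \mathcal{W}}$, respectively $\mathcal{T}_{\mathcal{R}, \mathcal{W}}$, with $\pi_{\mathcal{Q}, \mathcal{W}} = \pi_{\mathcal{R}, \mathcal{W}} \circ \pi_{\mathcal{Q}, \mathcal{R}}$ by positionality, the cone relation from the first step yields $\pi^{\mathcal{Q}, \eta}_{\mathcal{Q}, \infty} = \sigma^{\mathcal{Q}, \eta}_{\mathcal{W}, \infty} \circ \pi_{\mathcal{Q}, \mathcal{W}} = \sigma^{\mathcal{R}, \eta}_{\mathcal{W}, \infty} \circ \pi_{\mathcal{R}, \mathcal{W}} \circ \pi_{\mathcal{Q}, \mathcal{R}} = \pi^{\mathcal{R}, \eta}_{\mathcal{R}, \infty} \circ \pi_{\mathcal{Q}, \mathcal{R}}$, as required.

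The hard part will be the first step --- specifically fact (ii) and the verification that the maps $\mathcal{S}' \to \mathcal{M}_{\infty}(\mathcal{Q}, \eta)$ induced by the window-boundary chains cohere with the $\mathcal{G}(\mathcal{Q}, \eta)$-iteration maps. This is where positionality, full normalization, the normality clause of \cref{easy_lemma}(1) (which is what places earlier iterates on the main branch of later genericity iterations, so that the iteration maps factor correctly), and segmental normality of $\Sigma$ all enter; the genericity condition in \cref{genericity_iteration} is used in the background, since it is precisely what makes \cref{easy_lemma}(1) available through the extender-algebra argument of \cite{CoveringChang}. In fact the first step is exactly the argument of \cite[Theorem 3.8]{CoveringChang} carried out with $(\mathcal{Q}, \eta)$ in place of $(\mathcal{P}, 0)$, so one can either quote it directly or reproduce it with the obvious relativizations.
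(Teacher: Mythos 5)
Your proof is correct and is essentially the approach the paper takes: the paper gives no argument of its own for \cref{pres_of_M_infty} beyond the remark that the proof of \cite[Theorem 3.8]{CoveringChang} shows it, and that proof is exactly your interleaving argument --- every $\mathcal{S}\in\mathcal{F}^*_g(\mathcal{Q},\eta)$ extends to a genericity iterate by \cref{easy_lemma}(1), every genericity iterate decomposes into its window-boundary models lying in $\mathcal{F}^*_g(\mathcal{Q},\eta)$, and the genericity iterates of $\mathcal{R}$ above $\eta$ form a cofinal subsystem by \cref{easy_lemma}(2) and directedness. Your derivation of the factorization $\pi^{\mathcal{Q},\eta}_{\mathcal{Q},\infty}=\pi^{\mathcal{R},\eta}_{\mathcal{R},\infty}\circ\pi_{\mathcal{Q},\mathcal{R}}$ and of $\delta^{\mathcal{Q},\eta}_{\infty}=\delta^{\mathcal{R},\eta}_{\infty}$ from the cone relations is also as intended.
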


\begin{cor}\label{pres_of_CDM_plus}
    Let $\mathcal{Q}\in I^*_g(\mathcal{P}, \Sigma)$ and $\eta<\delta$.
    If $\mathcal{R}$ is any genericity iterate of $\mathcal{Q}$ above $\eta$, then
    \begin{align*}
        \mathsf{CDM}(\mathcal{Q}, \eta) &= \mathsf{CDM}(\mathcal{R}, \eta), \\
        \mathsf{CDM}^+(\mathcal{Q}, \eta) &= \mathsf{CDM}^+(\mathcal{R}, \eta),
    \end{align*}
    where these models are defined in $\mathcal{V}[g]$.
\end{cor}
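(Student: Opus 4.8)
The plan is to read both equalities straight off Lemma \ref{pres_of_M_infty}. First I would isolate which ingredients in the definition of $\mathsf{CDM}(\mathcal{Q},\eta)=L(\mathcal{M}_{\infty}(\mathcal{Q}, \eta), \cup_{\alpha<\delta^{\mathcal{Q}, \eta}_{\infty}}{}^{\omega}\alpha, \Gamma^*_g, \mathbb{R}^*_g)$ could conceivably depend on the choice of $\mathcal{Q}$: only the direct limit model $\mathcal{M}_{\infty}(\mathcal{Q},\eta)$ and the ordinal $\delta^{\mathcal{Q},\eta}_{\infty}$ bounding the range of $\alpha$ for which ${}^{\omega}\alpha$ is adjoined, since $\mathbb{R}^*_g$ and $\Gamma^*_g$ are computed from $g$ alone inside the fixed model $\mathcal{V}[g]$. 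By Lemma \ref{pres_of_M_infty}, if $\mathcal{R}$ is a genericity iterate of $\mathcal{Q}$ above $\eta$ then $\mathcal{M}_{\infty}(\mathcal{Q},\eta)=\mathcal{M}_{\infty}(\mathcal{R},\eta)$ and $\delta^{\mathcal{Q},\eta}_{\infty}=\delta^{\mathcal{R},\eta}_{\infty}$, whence $\cup_{\alpha<\delta^{\mathcal{Q},\eta}_{\infty}}{}^{\omega}\alpha=\cup_{\alpha<\delta^{\mathcal{R},\eta}_{\infty}}{}^{\omega}\alpha$. Since $L$ of a fixed set of parameters is determined by those parameters, $\mathsf{CDM}(\mathcal{Q},\eta)$ and $\mathsf{CDM}(\mathcal{R},\eta)$ are constructed from identical generators and hence coincide.

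For the second equality I would additionally observe that the predicate $\vec{\mu}$ is also insensitive to the choice between $\mathcal{Q}$ and $\mathcal{R}$. Indeed, $\vec{\mu}=\{\langle\alpha, A\rangle\mid\alpha<\delta^{\mathcal{Q}, \eta}_{\infty}\land A\in\mu_{\alpha}\}$, where $\mu_{\alpha}$ is the club filter on $\power_{\omega_1}(\alpha)$ as computed in $\mathcal{V}[g]$; each $\mu_{\alpha}$ depends only on $\alpha$ and the ambient model, never on any hod pair, so the sole way $\mathcal{Q}$ enters $\vec{\mu}$ is via the index bound $\delta^{\mathcal{Q},\eta}_{\infty}$. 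As this equals $\delta^{\mathcal{R},\eta}_{\infty}$ by Lemma \ref{pres_of_M_infty}, the two sequences $\vec{\mu}$ attached to $\mathcal{Q}$ and to $\mathcal{R}$ are literally the same set. Since $\mathsf{CDM}^+(\mathcal{Q},\eta)$ is just the relativized constructible closure $L(\mathcal{M}_{\infty}(\mathcal{Q}, \eta), \cup_{\alpha<\delta^{\mathcal{Q}, \eta}_{\infty}}{}^{\omega}\alpha, \Gamma^*_g, \mathbb{R}^*_g)[\vec{\mu}]$ over generators and a predicate that we have now identified on both sides, we conclude $\mathsf{CDM}^+(\mathcal{Q},\eta)=\mathsf{CDM}^+(\mathcal{R},\eta)$.

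There is essentially no obstacle; the corollary is a bookkeeping consequence of Lemma \ref{pres_of_M_infty}. The only point deserving a line of care is the verification that every object feeding into the two models other than $\mathcal{M}_{\infty}$ and $\delta_{\infty}$ --- namely $\mathbb{R}^*_g$, $\Gamma^*_g$, and each club filter $\mu_{\alpha}$ --- is absolute to the fixed model $\mathcal{V}[g]$ and in no way depends on which genericity iterate was used to form the direct limit system, so that the two parameters pinned down by Lemma \ref{pres_of_M_infty} are in fact the only things that vary.
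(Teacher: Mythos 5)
Your argument is correct and is precisely the intended one: the paper states \cref{pres_of_CDM_plus} as an immediate corollary of \cref{pres_of_M_infty} with no further proof, the point being exactly what you spell out --- that $\Gamma^*_g$, $\mathbb{R}^*_g$, and each club filter $\mu_{\alpha}$ are computed in $\mathcal{V}[g]$ independently of the iterate, so the only $\mathcal{Q}$-dependent ingredients are $\mathcal{M}_{\infty}(\mathcal{Q},\eta)$ and $\delta^{\mathcal{Q},\eta}_{\infty}$, both preserved by \cref{pres_of_M_infty}. Nothing is missing.
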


Now let $\mathcal{Q}$ be a genericity iterate of $\mathcal{P}$.
Then there is a $\mathcal{Q}$-generic $h\subset\mathrm{Col}(\omega, {<}\delta)$ (in $\mathcal{V}[g]$) such that $(\mathbb{R}^*_g)^{\mathcal{P}[g]}=(\mathbb{R}^*_h)^{\mathcal{Q}[h]}$.
We call such an $h$ \emph{maximal}.
The proof of \cite[Proposition 4.2]{CoveringChang} shows the following.

\begin{lem}\label{locality_of_CDM}
Let $\mathcal{Q}\in I^*_g(\mathcal{P}, \Sigma)$ and $\eta<\delta$.
If $h\subset\mathrm{Col}(\omega, {<}\delta)$ is a maximal $\mathcal{Q}$-generic, then
\[
\mathsf{CDM}(\mathcal{Q}, \eta)=(\mathsf{CDM}(\mathcal{Q}, \eta))^{\mathcal{V}_{\mathcal{Q}}[h]}.
\]
\end{lem}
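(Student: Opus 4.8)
The plan is to reduce the claimed identity to the assertion that the four defining parameters of $\mathsf{CDM}(\mathcal{Q},\eta)$ — namely $\mathcal{M}_{\infty}(\mathcal{Q},\eta)$, the set $\bigcup_{\alpha<\delta^{\mathcal{Q},\eta}_{\infty}}{}^{\omega}\alpha$, $\Gamma^*_g$, and $\mathbb{R}^*_g$ — are computed identically in $\mathcal{V}[g]$ and in $\mathcal{V}_{\mathcal{Q}}[h]$. This suffices because $\mathcal{V}_{\mathcal{Q}}$ is obtained by applying $\mathcal{T}_{\mathcal{P},\mathcal{Q}}$ to $\mathcal{V}$ inside some $\mathcal{V}[g\uphar\xi]$ with $\xi<\delta$ and $h\in\mathcal{V}[g]$, so $\mathcal{V}_{\mathcal{Q}}[h]\subset\mathcal{V}[g]$; both satisfy $\mathsf{ZFC}$ and have the same ordinals, and the operation $X\mapsto L(X)$ is absolute between transitive models of $\mathsf{ZFC}$ containing $X$, so once the parameters agree the two versions of $\mathsf{CDM}(\mathcal{Q},\eta)$ coincide.

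First I would handle the three inner model theoretic parameters. The equality $\mathbb{R}^*_g=\mathbb{R}^*_h$ is precisely the defining property of a maximal $h$. The equality $\Gamma^*_g=\Gamma^*_h$ amounts to $\mathsf{DM}^{\mathcal{P}[g]}$ and $\mathsf{DM}^{\mathcal{Q}[h]}$ having the same sets of reals; this follows from the derived model theorem for lbr hod mice (\cref{determinacy_in_DM}) together with generic interpretability, using that $\mathcal{T}_{\mathcal{P},\mathcal{Q}}$ is based on $\mathcal{P}\vert\delta$ and therefore preserves the relevant ${<}\delta$-universally Baire sets in the limit over the collapse stages. For $\mathcal{M}_{\infty}(\mathcal{Q},\eta)$: by generic interpretability $\mathcal{V}_{\mathcal{Q}}$ carries the canonical extension $\Sigma^h_{\mathcal{Q}}$ of $\Sigma_{\mathcal{Q}}$ to $\mathcal{V}_{\mathcal{Q}}[h]$, so $\mathcal{V}_{\mathcal{Q}}[h]$ can form its own copy of the direct limit system $\mathcal{F}^*_g(\mathcal{Q},\eta)$, and one checks that this internal system is cofinal in the external one: given $\mathcal{R}\in\mathcal{F}^*_g(\mathcal{Q},\eta)$, a genericity iteration of $\mathcal{R}$ above $\eta$ provided by \cref{easy_lemma} yields a further iterate $\mathcal{S}$ whose iteration tree is coded by an element of $\mathbb{R}^*_g=\mathbb{R}^*_h$, hence recoverable inside $\mathcal{V}_{\mathcal{Q}}[h]$ from the corresponding extender-algebra generic, and $\pi^{\mathcal{Q},\eta}_{\mathcal{R},\infty}$ factors through $\mathcal{S}$. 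Since cofinal subsystems have the same direct limit, $\mathcal{M}_{\infty}(\mathcal{Q},\eta)$, the maps $\pi^{\mathcal{Q},\eta}_{\mathcal{R},\infty}$, and in particular the ordinal $\delta^{\mathcal{Q},\eta}_{\infty}$, are computed the same way in $\mathcal{V}[g]$ and in $\mathcal{V}_{\mathcal{Q}}[h]$.

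The main obstacle is the last parameter, i.e.\ showing $({}^{\omega}\alpha)^{\mathcal{V}[g]}=({}^{\omega}\alpha)^{\mathcal{V}_{\mathcal{Q}}[h]}$ for every $\alpha<\delta^{\mathcal{Q},\eta}_{\infty}$ — a covering-type statement to the effect that passing from $\mathcal{V}_{\mathcal{Q}}[h]$ up to $\mathcal{V}[g]$ creates no new $\omega$-sequences of ordinals below $\delta^{\mathcal{Q},\eta}_{\infty}$. The strategy: given $s\in({}^{\omega}\alpha)^{\mathcal{V}[g]}$, use countable directedness of the direct limit system (a consequence of the regularity properties of $\Sigma$) to find $\mathcal{R}$ in the internal system and $\bar\alpha<\delta$ with $\alpha=\pi^{\mathcal{Q},\eta}_{\mathcal{R},\infty}(\bar\alpha)$ and $\mathrm{ran}(s)\subset\mathrm{ran}(\pi^{\mathcal{Q},\eta}_{\mathcal{R},\infty})$, so that $s=\pi^{\mathcal{Q},\eta}_{\mathcal{R},\infty}\circ\bar s$ with $\bar s\colon\omega\to\bar\alpha$ and $\pi^{\mathcal{Q},\eta}_{\mathcal{R},\infty}\uphar\bar\alpha\in\mathcal{V}_{\mathcal{Q}}[h]$ by the previous paragraph; it then suffices to see $\bar s\in\mathcal{V}_{\mathcal{Q}}[h]$. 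This is exactly where the genericity-iteration structure of $\mathcal{Q}$ and the maximality of $h$ are essential: running a further genericity iteration of $\mathcal{R}$ above $\eta$, which by \cref{pres_of_M_infty} fixes $\mathcal{M}_{\infty}(\mathcal{Q},\eta)$, $\delta^{\mathcal{Q},\eta}_{\infty}$, and the direct limit map, one arranges that $\bar s$ is captured by the extender algebra of an appropriate window and hence coded by an element of $\mathbb{R}^*_g=\mathbb{R}^*_h$, whence $\bar s$, and therefore $s$, lies in $\mathcal{V}_{\mathcal{Q}}[h]$. Making this last step precise is the heart of the argument and is exactly what is carried out in the proof of \cite[Proposition 4.2]{CoveringChang}, whose structure we follow.
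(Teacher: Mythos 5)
Your proposal is correct and follows essentially the same route as the paper, whose proof of this lemma consists of invoking the argument for \cite[Proposition 4.2]{CoveringChang}: one verifies that each generator of $\mathsf{CDM}(\mathcal{Q},\eta)$ --- the reals $\mathbb{R}^*$, the pointclass $\Gamma^*$, the direct limit $\mathcal{M}_{\infty}(\mathcal{Q},\eta)$, and the $\omega$-sequences of ordinals below $\delta^{\mathcal{Q},\eta}_{\infty}$ --- is computed identically in $\mathcal{V}[g]$ and in $\mathcal{V}_{\mathcal{Q}}[h]$, with the covering statement for $\omega$-sequences as the essential point. You correctly isolate that covering step as the heart of the argument and, like the paper, defer its details to \cite{CoveringChang}.
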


\subsection{Supercompact measures on $\power_{\omega_1}(\alpha)$}

We would like to generalize \cref{locality_of_CDM} to $\mathsf{CDM}^+(\mathcal{Q}, \eta)$, which is crucial for almost all our proofs of the results in this paper.
It does not seem that this is true for arbitrary $\mathcal{Q}$ and $\eta$, so we need to describe how $\mathcal{Q}$ and $\eta$ in \cref{main_thm_1} should be chosen.
Note that if $\eta\leq\eta'<\delta$ then $\delta^{\mathcal{Q}, \eta}_{\infty}\geq\delta^{\mathcal{Q}, \eta'}_{\infty}$ just because $\mathcal{F}^*_g(\mathcal{Q}, \eta')$ is a subsystem of $\mathcal{F}^*_g(\mathcal{Q}, \eta)$.
In general, $\delta^{\mathcal{Q}, \eta}_{\infty}>\delta^{\mathcal{Q}, \eta'}_{\infty}$ is possible, see, for example, \cref{value_of_Theta} below.
The following lemma is trivial, but it is actually one of the key observations in this paper.

\begin{lem}\label{stabilizing_delta_infty}
    There is a genericity iterate $\mathcal{Q}$ of $\mathcal{P}$ and an ordinal $\eta<\delta$ such that for any genericity iterate $\mathcal{R}$ of $\mathcal{Q}$ above $\eta$ and any ordinal $\xi\in [\eta, \delta)$, $\delta^{\mathcal{Q}, \eta}_{\infty}=\delta^{\mathcal{R}, \xi}_{\infty}$.
\end{lem}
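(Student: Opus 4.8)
The plan is to run a ``stabilization'' argument exploiting the two monotonicity facts recorded just before the statement: first, that $\delta^{\mathcal{Q}, \eta}_{\infty}$ is (weakly) decreasing in $\eta$ for a fixed $\mathcal{Q}$, since $\mathcal{F}^*_g(\mathcal{Q}, \eta')$ is a subsystem of $\mathcal{F}^*_g(\mathcal{Q}, \eta)$ when $\eta \leq \eta'$; and second, that passing to a genericity iterate above $\eta$ does not change the direct limit, by \cref{pres_of_M_infty}, so $\delta^{\mathcal{Q}, \eta}_{\infty} = \delta^{\mathcal{R}, \eta}_{\infty}$ whenever $\mathcal{R}$ is a genericity iterate of $\mathcal{Q}$ above $\eta$. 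Combining these, for any genericity iterate $\mathcal{R}$ of $\mathcal{Q}$ above $\eta$ and any $\xi \in [\eta, \delta)$, we automatically have $\delta^{\mathcal{R}, \xi}_{\infty} \leq \delta^{\mathcal{R}, \eta}_{\infty} = \delta^{\mathcal{Q}, \eta}_{\infty}$; so the only thing that can go wrong is a \emph{strict} drop, and the goal is to find $(\mathcal{Q}, \eta)$ past which no such drop occurs.

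The key step is to argue that such strict drops cannot happen unboundedly often, by an ordinal-descent / well-foundedness argument. Suppose not; then one can build, by iterating the failure, a sequence of pairs $(\mathcal{Q}_0, \eta_0), (\mathcal{Q}_1, \eta_1), \dots$ where each $\mathcal{Q}_{n+1}$ is a genericity iterate of $\mathcal{Q}_n$ above $\eta_n$ and each $\eta_{n+1} \in [\eta_n, \delta)$ witnesses a strict inequality $\delta^{\mathcal{Q}_{n+1}, \eta_{n+1}}_{\infty} < \delta^{\mathcal{Q}_n, \eta_n}_{\infty}$ (here I would use \cref{easy_lemma}(2) to keep the ``genericity iterate above $\eta_n$'' relation coherent along the chain, and \cref{easy_lemma}(1) to keep all the trees inside a fixed $\mathcal{V}[g\uphar\xi]$, which is needed for the construction to stay in the system). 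But then $\langle \delta^{\mathcal{Q}_n, \eta_n}_{\infty} \mid n < \omega \rangle$ is a strictly decreasing sequence of ordinals, which is absurd. Hence the sequence of values must stabilize, and taking $(\mathcal{Q}, \eta)$ to be a pair at which the value $\delta^{\mathcal{Q}, \eta}_{\infty}$ is already minimal among all genericity iterates above $\eta$ and all $\xi \geq \eta$ does the job: for any further genericity iterate $\mathcal{R}$ above $\eta$ and any $\xi \in [\eta, \delta)$, minimality forces $\delta^{\mathcal{R}, \xi}_{\infty} = \delta^{\mathcal{Q}, \eta}_{\infty}$, since it is both $\leq$ (by the monotonicity chain above) and $\geq$ (by minimality, as $(\mathcal{R}, \xi)$ is itself in the class over which the minimum was taken — this uses transitivity of ``genericity iterate above $\eta$'' from \cref{easy_lemma}(2) so that $\mathcal{R}$ qualifies as a competitor to $\mathcal{Q}$).

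The main obstacle, and the point requiring the most care, is making sure the class of pairs over which one minimizes is genuinely closed under the operation ``pass to a genericity iterate above $\eta$ and raise $\xi$'' so that the minimum is actually attained and the descent argument is legitimate — in particular, one must check that if $\mathcal{R}$ is a genericity iterate of $\mathcal{Q}$ above $\eta$, then for $\xi \geq \eta$ every genericity iterate of $\mathcal{R}$ above $\xi$ is again a genericity iterate of $\mathcal{Q}$ above $\eta$ (this is exactly \cref{easy_lemma}(2), possibly combined with the observation that a genericity iterate above $\xi$ is a genericity iterate above $\eta$ when $\xi \geq \eta$). One also needs the pairs to remain ``internal'', i.e.\ the witnessing trees to lie in some $\mathcal{V}[g\uphar\xi']$ with $\xi' < \delta$, which is guaranteed by the definition of genericity iterate together with \cref{easy_lemma}(1); without this the $\delta^{\cdot,\cdot}_\infty$ notation would not even be defined. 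Once the bookkeeping is set up correctly, the proof is genuinely short, as the statement's own hint (``this lemma is trivial'') suggests: it is a two-line well-foundedness argument wrapped around the two monotonicity facts.
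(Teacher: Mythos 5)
Your proposal is correct and follows essentially the same route as the paper: the paper's proof is exactly the well-foundedness argument you describe, namely assuming the conclusion fails, using \cref{easy_lemma}(2) to inductively build a chain $\langle\mathcal{Q}_n,\eta_n\mid n<\omega\rangle$ with $\delta^{\mathcal{Q}_n,\eta_n}_{\infty}>\delta^{\mathcal{Q}_{n+1},\eta_{n+1}}_{\infty}$, and deriving a contradiction from the strictly decreasing sequence of ordinals. Your additional observations (the two monotonicity facts showing that only a strict drop can occur, and the reformulation via taking a minimizing pair) are correct and just make explicit what the paper leaves implicit.
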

\begin{proof}
    Suppose not.
    By \cref{easy_lemma}(2), one can inductively find $\langle\mathcal{Q}_n, \eta_n\mid n<\omega\rangle$ such that for any $n<\omega$, $\mathcal{Q}_{n+1}$ is a genericity iterate of $\mathcal{Q}_n$, $\eta_n<\eta_{n+1}$, and $\delta^{\mathcal{Q}_n, \eta_n}_{\infty}>\delta^{\mathcal{Q}_{n+1}, \eta_{n+1}}_{\infty}$.
    This is a contradiction as we have found a strictly decreasing infinite sequence of ordinals in $\mathcal{V}[g]$.
\end{proof}

We say that \emph{$(\mathcal{Q}, \eta)$ stabilizes $\delta_{\infty}$} if it satisfies the conclusion of \cref{stabilizing_delta_infty}.

\begin{ques}\label{ques_on_stabilizing_delta_infty}
Does some large cardinal assumption on $\delta$ in $\mathcal{V}$ imply that $(\mathcal{P}, 0)$ stabilizes $\delta_{\infty}$?
\end{ques}

An affirmative answer to \cref{ques_on_stabilizing_delta_infty} might be useful because $\mathcal{M}_{\infty}(\mathcal{P}, 0)$ extends $\mathrm{HOD}$ up to $\Theta$ in $\mathsf{CDM}^+(\mathcal{P}, 0)$.
For the results in this paper we do not need to answer \cref{ques_on_stabilizing_delta_infty} as we will simply work above some fixed $(\mathcal{Q}, \eta)$ that stabilizes $\delta_{\infty}$.
Now we are ready to prove the main lemma.

\begin{lem}\label{locality_of_CDM_plus}
    Let $\mathcal{Q}$ be a genericity iterate of $\mathcal{P}$ and let $\eta<\delta$ be such that $(\mathcal{Q}, \eta)$ stabilizes $\delta_{\infty}$.
    Then, whenever $\mathcal{Q}'\in\mathcal{V}_{\mathcal{Q}}[h]$ is a genericity iterate of $\mathcal{Q}$ above $\eta$ and $h'\subset\mathrm{Col}(\omega, {<}\delta)$ is a maximal $\mathcal{Q}'$-generic such that $h'\in\mathcal{V}_{\mathcal{Q}}[h]$,
    \[
    \mathsf{CDM}^+(\mathcal{Q}', \eta)=(\mathsf{CDM}^+(\mathcal{Q}', \eta))^{\mathcal{V}_{\mathcal{Q}'}[h']}.
    \]
\end{lem}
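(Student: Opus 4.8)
The plan is to reduce the statement to \cref{locality_of_CDM} together with an absoluteness property of the club filters restricted to $\mathsf{CDM}^+(\mathcal{Q}', \eta)$, and then to establish that property by a covering argument using genericity iterations. \emph{Reduction.} Note first that $\mathcal{Q}'$ is a genericity iterate of $\mathcal{P}$ (this is implicit in the phrase ``maximal $\mathcal{Q}'$-generic''; it also follows by composing the genericity iterations $\mathcal{P}\to\mathcal{Q}$ and $\mathcal{Q}\to\mathcal{Q}'$ in the spirit of \cref{easy_lemma}(2)), and that $(\mathcal{Q}', \eta)$ stabilizes $\delta_\infty$ with $\delta^{\mathcal{Q}', \eta}_\infty = \delta^{\mathcal{Q}, \eta}_\infty$, since $(\mathcal{Q},\eta)$ does and any genericity iterate of $\mathcal{Q}'$ above $\eta$ is one of $\mathcal{Q}$ above $\eta$. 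So \cref{locality_of_CDM} applies to $\mathcal{Q}'$ and $h'$, giving $\mathsf{CDM}(\mathcal{Q}', \eta) = (\mathsf{CDM}(\mathcal{Q}', \eta))^{\mathcal{V}_{\mathcal{Q}'}[h']}$; moreover, by its proof, each of the four generators $\mathcal{M}_\infty(\mathcal{Q}', \eta)$, $\bigcup_{\alpha < \delta^{\mathcal{Q}', \eta}_\infty} {}^{\omega}\alpha$, $\Gamma^*_g$, $\mathbb{R}^*_g$ is computed the same way in $\mathcal{V}[g]$ and in $\mathcal{V}_{\mathcal{Q}'}[h']$. Since for $\alpha < \delta^{\mathcal{Q}', \eta}_\infty$ the set $\power_{\omega_1}(\alpha)$ consists exactly of the finite subsets of $\alpha$ and the ranges of the $\omega$-sequences in the third generator, $\power_{\omega_1}(\alpha)$ (hence also $\omega_1$) is computed the same way in both models, and in particular $\mathsf{CDM}^+(\mathcal{Q}', \eta)\vert 0$ agrees. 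We then prove by induction on $\gamma$ that $\mathsf{CDM}^+(\mathcal{Q}', \eta)\vert\gamma$ agrees with its computation inside $\mathcal{V}_{\mathcal{Q}'}[h']$: limit stages are unions, and at a successor stage the operator $\mathrm{Def}(\,\cdot\,, {\in}, \vec\mu\cap\,\cdot\,)$ applied to a fixed structure is absolute, so the step amounts to showing that $\vec\mu\cap\mathsf{CDM}^+(\mathcal{Q}', \eta)\vert\gamma$ is computed the same way. Using the induction hypothesis (so that any relevant $A$ already lies in $\mathcal{V}_{\mathcal{Q}'}[h']$), this reduces the whole lemma to: for $\alpha<\delta^{\mathcal{Q}', \eta}_\infty$ and $A\in\mathsf{CDM}^+(\mathcal{Q}', \eta)$ with $A\subseteq\power_{\omega_1}(\alpha)$, $A\in(\mu_\alpha)^{\mathcal{V}[g]}$ if and only if $A\in(\mu_\alpha)^{\mathcal{V}_{\mathcal{Q}'}[h']}$.

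\emph{The $C_f$ characterization and the trivial direction.} Recall that for uncountable $\alpha$ the club filter on $\power_{\omega_1}(\alpha)$ is generated by the clubs $C_f:=\{X\in\power_{\omega_1}(\alpha)\mid f[{}^{<\omega}X]\subseteq X\}$, $f\colon{}^{<\omega}\alpha\to\alpha$ (for countable $\alpha$, $\mu_\alpha$ is the principal filter at $\{\alpha\}$, which is trivially absolute). As $\power_{\omega_1}(\alpha)$ is absolute between our two models and ``$X$ is closed under $f$'' is arithmetic in $X,f$, each $C_f$ is absolute, and hence $(\mu_\alpha)^{W}=\{A\mid\exists f\in W\,(C_f\subseteq A)\}$ for $W\in\{\mathcal{V}[g],\mathcal{V}_{\mathcal{Q}'}[h']\}$. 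Since $\mathcal{V}_{\mathcal{Q}'}[h']\subseteq\mathcal{V}[g]$, this immediately gives $(\mu_\alpha)^{\mathcal{V}_{\mathcal{Q}'}[h']}\subseteq(\mu_\alpha)^{\mathcal{V}[g]}$.

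\emph{The remaining direction — the main obstacle.} Suppose $A\in\mathsf{CDM}^+(\mathcal{Q}', \eta)$, $A\subseteq\power_{\omega_1}(\alpha)$ with $\alpha$ uncountable, and $C_f\subseteq A$ for some $f\in\mathcal{V}[g]$; by the induction hypothesis $A\in\mathcal{V}_{\mathcal{Q}'}[h']$, and we must produce $f'\in\mathcal{V}_{\mathcal{Q}'}[h']$ with $C_{f'}\subseteq A$. We plan to argue by covering. Working in $\mathcal{V}[g]$, pass to a genericity iterate $\mathcal{Q}''$ of $\mathcal{Q}'$ above $\eta$ together with a maximal $\mathcal{Q}''$-generic $h''$, chosen so that $f$ (more precisely, its restriction to the part of $\delta^{\mathcal{Q}', \eta}_\infty$ below $\alpha$) is captured by $\mathcal{V}_{\mathcal{Q}''}$ — that is, so that the Skolem-hull club $C_{f_0}$ of a sufficiently high level of $\mathcal{V}_{\mathcal{Q}''}[h'']$ satisfies $C_{f_0}\subseteq C_f$. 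This should follow from the covering behaviour of genericity iterations, using that the models $\mathcal{V}_{\mathcal{R}}$ occurring in the internal direct limit systems compute $\bigcup_{\alpha<\delta^{\mathcal{Q}', \eta}_\infty}{}^{\omega}\alpha$ correctly (cf.\ \cref{pres_of_M_infty,locality_of_CDM}). By \cref{pres_of_CDM_plus} and the stabilization of $\delta_\infty$, $\mathsf{CDM}^+(\mathcal{Q}'', \eta)=\mathsf{CDM}^+(\mathcal{Q}', \eta)$ and $\delta^{\mathcal{Q}'', \eta}_\infty=\delta^{\mathcal{Q}', \eta}_\infty$, so $\alpha$ and $A$ are unchanged, and $f_0$ witnesses $A\in(\mu_\alpha)^{\mathcal{V}_{\mathcal{Q}''}[h'']}$. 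Finally we descend: applying \cref{locality_of_CDM} at $\mathcal{Q}''$ and comparing the internal direct limit systems over $\mathcal{V}_{\mathcal{Q}'}$ and over $\mathcal{V}_{\mathcal{Q}''}$ — here the hypotheses $\mathcal{Q}', h'\in\mathcal{V}_{\mathcal{Q}}[h]$ are used, so that all the auxiliary objects lie inside the single symmetric collapse $\mathcal{V}_{\mathcal{Q}}[h]$ to which \cref{locality_of_CDM} uniformly applies — we obtain some $f'\in\mathcal{V}_{\mathcal{Q}'}[h']$ with $C_{f'}\subseteq A$. Making the capturing of $f$ precise and carrying out this last descent are the delicate points, and we expect this step to be the main difficulty.
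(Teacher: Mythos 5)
Your reduction to the agreement of the club filters on sets in $\mathsf{CDM}^+(\mathcal{Q}',\eta)\vert\gamma$ is exactly how the paper sets things up, and your ``trivial direction'' (a club of $\mathcal{V}_{\mathcal{Q}'}[h']$ remains a club of $\mathcal{V}[g]$ because $\power_{\omega_1}(\alpha)$ is the same in both models) is fine. The gap is in the other direction, and it is not a technicality you can expect to patch: the plan of capturing a club witness $f\colon{}^{<\omega}\alpha\to\alpha$ with $C_f\subseteq A$ inside some $\mathcal{V}_{\mathcal{Q}''}[h'']$ cannot work. Such an $f$ is an uncountable object (here $\alpha\geq\delta\geq\omega_1^{\mathcal{V}[g]}$), and the covering phenomenon underlying $\mathsf{CDM}$ only concerns $\omega$-sequences of ordinals; there is no reason an arbitrary $f\in\mathcal{V}[g]$, or even a function $f_0$ in an inner model whose closure points are all closure points of $f$, should exist. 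Indeed, agreement of $\power_{\omega_1}(\alpha)$ between an inner model $W$ and an outer model $V$ does \emph{not} imply that every club of $V$ contains a club of $W$ (e.g.\ $V=W[c]$ for $c\subset\omega_1$ added by $\mathrm{Add}(\omega_1,1)$: no countable sequences are added, yet the club $\{\beta\mid\mathrm{otp}(c\cap\beta)=\beta\}$ contains no club of $W$). So the ``descent'' step you flag as the main difficulty is genuinely false as a general covering statement, and nothing in your proposal exploits the only hypothesis that could rescue it, namely that $A$ is \emph{definable} over $\mathsf{CDM}^+(\mathcal{Q}',\eta)\vert\overline{\gamma}$.

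What the paper does instead is prove a dichotomy rather than transfer club witnesses. For a suitable genericity iterate $\mathcal{R}$ and $\xi$, the hull points $\sigma_{\mathcal{R}^*,\xi}=\mathrm{ran}(\pi^{\mathcal{R},\xi}_{\mathcal{R}^*,\infty})\cap\alpha$ form a set $C_{\mathcal{R},\xi}$ containing a club of $\mathcal{V}_{\mathcal{R}}[k]$ (\cref{finding_clubs}, built by an explicit $\delta$-length recursion through $\mathcal{F}^*_k(\mathcal{R},\xi)$), and --- this is the heart of the argument --- $C_{\mathcal{R},\xi}$ is either entirely contained in $A$ or entirely contained in its complement (\cref{club_dichotomy}). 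The proof of the dichotomy uses the definition of $A$ by a formula $\phi$ with parameters $Y,Z,x,\vec\beta$, chooses $\mathcal{R}$ so that these parameters are in $\mathrm{ran}(\pi^{\mathcal{R},\eta}_{\mathcal{V}_{\mathcal{R}},\infty})$ and hence are fixed by further iteration maps (\cref{stabilizing_parameters}), and then moves any $\sigma_{\mathcal{R}^*,\xi}$ to $\sigma_{\mathcal{S},\xi}$ by an elementary embedding $\pi^+_{\mathcal{V}_{\mathcal{R}},\mathcal{V}_{\mathcal{S}}}$ that preserves the defining formula, together with the induction hypothesis and \cref{pres_of_CDM_plus} to identify the relevant levels of $\mathsf{CDM}^+$ across iterates. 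Once the dichotomy is in hand, both directions of the filter agreement follow at once: if $A$ contains a club of either model it must meet $C_{\mathcal{R},\xi}$, forcing $C_{\mathcal{R},\xi}\subseteq A$, and then the club of $\mathcal{V}_{\mathcal{R}}[k]\subseteq\mathcal{V}_{\mathcal{Q}'}[h']$ inside $C_{\mathcal{R},\xi}$ witnesses $A\in\mu_\alpha^{\mathcal{V}_{\mathcal{Q}'}[h']}$ (and, being a club of $\mathcal{V}[g]$ as well, $A\in\mu_\alpha$). Equivalently, the dichotomy shows the restricted filter is an ultrafilter, and your easy direction then yields the hard one. Without this homogeneity argument your proof does not go through.
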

\begin{proof}
    We show 
    \[
    \mathsf{CDM}^+(\mathcal{Q}', \eta)\vert\gamma=(\mathsf{CDM}^+(\mathcal{Q}', \eta)\vert\gamma)^{\mathcal{V}_{\mathcal{Q}'}[h']}
    \]
    by induction on $\gamma$.
    If $\gamma=0$, then it follows from \cref{locality_of_CDM}.
    As the limit steps are trivial, it is enough to consider the successor steps.
    So suppose that $\mathsf{CDM}^+(\mathcal{Q}', \eta)\vert\gamma=(\mathsf{CDM}^+(\mathcal{Q}', \eta)\vert\gamma)^{\mathcal{V}_{\mathcal{Q}'}[h']}$.
    It suffices to show that for all $\alpha\in\gamma\cap\delta^{\mathcal{Q}', \eta}_{\infty}$,
    \[
    \mu_{\alpha}\cap\mathsf{CDM}^+(\mathcal{Q}', \eta)\vert\gamma=\mu_{\alpha}^{\mathcal{V}_{\mathcal{Q}'}[h']}\cap\mathsf{CDM}^+(\mathcal{Q}', \eta)\vert\gamma,
    \]
    which implies that $\mathsf{CDM}^+(\mathcal{Q}', \eta)\vert\gamma+1=(\mathsf{CDM}^+(\mathcal{Q}', \eta)\vert\gamma+1)^{\mathcal{V}_{\mathcal{Q}'}[h']}$.
    We fix such $\alpha$ for the rest of the proof.
    
    Let $\mathcal{R}$ be a genericity iterate of $\mathcal{Q}'$, let $k\subset\mathrm{Col}(\omega, {<}\delta)$ be a maximal $\mathcal{R}$-generic, and let $\xi<\delta$.
    For any $\mathcal{R}^*\in\mathcal{F}^*_k(\mathcal{R}, \xi)$, we set
    \[
    \sigma_{\mathcal{R}^*, \xi} = \mathrm{ran}(\pi^{\mathcal{R}, \xi}_{\mathcal{R}^*, \infty})\cap\alpha\in\power_{\omega_1}(\alpha)
    \]
    and define
    \[
    C_{\mathcal{R}, \xi}  = \{\sigma_{\mathcal{R}^*, \xi}\mid\mathcal{R}^*\in\mathcal{F}^*_k(\mathcal{R}, \xi)\land\alpha\in\mathrm{ran}(\pi^{\mathcal{R}, \xi}_{\mathcal{R}^*, \infty})\}.
    \]
\begin{claim}\label{finding_clubs}
    Whenever $\mathcal{R}$ is a genericity iterate of $\mathcal{Q}'$ above $\eta$, $k\subset\mathrm{Col}(\omega, {<}\delta)$ is a maximal $\mathcal{R}$-generic, and $\xi\in[\eta, \delta)$, then $C_{\mathcal{R}, \xi}$ contains a club subset of $\power_{\omega_1}(\alpha)$ in $\mathcal{V}_{\mathcal{R}}[k]$.
\end{claim}
\begin{proof}
    Since $\alpha<\delta^{\mathcal{Q}', \eta}_{\infty}=\delta^{\mathcal{R}, \eta}_{\infty}<(\delta^+)^{\mathcal{V}_{\mathcal{R}}[k]}$, there is a bijection $f\colon\delta\to\power_{\omega_1}(\alpha)$ in $\mathcal{V}_{\mathcal{R}}[k]$.
    Using such an $f$, we inductively define $\mathcal{R}_{\beta}\in\mathcal{F}^*_k(\mathcal{R}, \xi)$ for $\beta<\delta$ as follows.
    First note that $\delta^{\mathcal{Q}', \eta}_{\infty}=\delta^{\mathcal{R}, \xi}_{\infty}$ as $(\mathcal{Q}, \eta)$ stabilizes $\delta_{\infty}$.
    It follows that there are cofinally many $\mathcal{R}^*$ in $\mathcal{F}^*_k(\mathcal{R}, \xi)$ such that $\alpha\in\mathrm{ran}(\pi^{\mathcal{R}, \xi}_{\mathcal{R}^*, \infty})$.
    Now let $\mathcal{R}_0\in\mathcal{F}^*_k(\mathcal{R}, \xi)$ be such an $\mathcal{R}^*$.
    Also, for each $\beta<\delta$, let $\mathcal{R}_{\beta+1}\in\mathcal{F}^*_k(\mathcal{R}, \xi)$ be an iterate of $\mathcal{R}_{\beta}$ such that $f(\beta)\subset\mathrm{ran}(\pi^{\mathcal{R}, \xi}_{\mathcal{R}_{\beta+1}, \infty})$.
    This is possible because for any $\sigma\in\power_{\omega_1}(\alpha)$, there are cofinally many $\mathcal{R}^*$ in $\mathcal{F}^*_k(\mathcal{R}, \xi)$ such that $\sigma\subset\mathrm{ran}(\pi^{\mathcal{R}, \xi}_{\mathcal{R}^*, \infty})$.
    Finally, for each limit ordinal $\lambda<\delta$, let $\mathcal{R}_{\lambda}$ be the direct limit of $\langle\mathcal{R}_\beta, \pi_{\mathcal{R}_{\beta}, \mathcal{R}_{\gamma}}\mid\beta<\gamma<\lambda\rangle$.
    By the construction, $\alpha\in\mathrm{ran}(\pi^{\mathcal{R}, \xi}_{\mathcal{R}_{\beta}, \infty})$ for any $\beta<\delta$ and $\{\sigma_{\mathcal{R}_{\beta}, \xi}\mid\beta<\delta\}$ is a closed unbounded subset of $\power_{\omega_1}(\alpha)$.
\end{proof}

\begin{claim}\label{club_dichotomy}
    Let $A\subset\power_{\omega_1}(\alpha)$ be such that $A\in\mathsf{CDM}^+(\mathcal{Q}', \eta)\vert\gamma$.
    Then there are a genericity iterate $\mathcal{R}$ of $\mathcal{Q}'$ and a $\xi\in [\eta, \delta)$ such that $\mathcal{R}\in\mathcal{V}_{\mathcal{Q}'}[h']$ and the following hold:
    \begin{enumerate}
        \item If $\sigma_{\mathcal{R}, \xi}\in A$, then $C_{\mathcal{R}, \xi}\subset A$, and
        \item If $\sigma_{\mathcal{R}, \xi}\notin A$, then $C_{\mathcal{R}, \xi}\subset\power_{\omega_1}(\alpha)\setminus A$.
    \end{enumerate}
\end{claim}
\begin{proof}
    Let $A\subset\power_{\omega_1}(\alpha)$ be in $\mathsf{CDM}^+(\mathcal{Q}', \eta)$.
    Then for some formula $\phi$ in the language for $\mathsf{CDM}^+(\mathcal{Q}', \eta)$ and some ordinal $\overline{\gamma}<\gamma$,
    \[
    A=\{\sigma\in\power_{\omega_1}(\alpha)\mid(\mathsf{CDM}^+(\mathcal{Q}', \eta)\vert\overline{\gamma}; {\in}, \vec{\mu})\models\phi[\sigma, Y, Z, x, \vec{\beta}]\},
    \]
    where $Y\in {}^{\omega}\zeta$ for some $\zeta<\delta^{\mathcal{Q}', \eta}_{\infty}$, $Z\in\Gamma^*_g$, $x\in\mathbb{R}^*_g$, and $\vec{\beta}\in{}^{<\omega}\overline{\gamma}$.
    Then let $\mathcal{R}$ be a genericity iterate of $\mathcal{Q}'$ above $\eta$ such that $\mathcal{R}\in\mathcal{V}_{\mathcal{Q}'}[h']$ and $\{\alpha, \vec{\beta}, \overline{\gamma}\}\cup\mathrm{ran}(Y)\subset\mathrm{ran}(\pi^{\mathcal{R}, \eta}_{\mathcal{V}_{\mathcal{R}}, \infty})$.
    To find such an $\mathcal{R}$, let $\mathcal{Q}^*\in\mathcal{F}^*_{h'}(\mathcal{Q}', \eta)$ be such that $\{\alpha, \vec{\beta}, \overline{\gamma}\}\cup\mathrm{ran}(Y)\subset\mathrm{ran}(\pi^{\mathcal{Q}', \eta}_{\mathcal{V}_{\mathcal{Q}^*}, \infty})$.
    Such a $\mathcal{Q}^*$ exists because $\mathcal{F}^*_{h'}(\mathcal{Q}', \eta)$ is countably directed.
    By \cref{easy_lemma}(1), there is an iterate $\mathcal{R}$ of $\mathcal{Q}^*$ in $\mathcal{V}_{\mathcal{Q}'}[h']$ such that it is a genericity iterate of $\mathcal{Q}'$ and $\mathcal{T}_{\mathcal{Q}', \mathcal{Q}^*}{}^{\frown}\mathcal{T}_{\mathcal{Q}^*, \mathcal{R}}$ is normal.
    Since $\pi^{\mathcal{Q}', \eta}_{\mathcal{V}_{\mathcal{Q}^*}, \infty}=\pi^{\mathcal{R}, \eta}_{\mathcal{V}_{\mathcal{R}}, \infty}\circ\pi_{\mathcal{V}_{\mathcal{Q}^*}, \mathcal{V}_{\mathcal{R}}}$, $\mathcal{R}$ satisfies the desired property.
    This argument to find a genericity iterate that ``catches'' a given countable subset of the direct limit model is repeatedly used in this paper.
    For what follows, we will use the next subclaim.

    \begin{subclaim}[{\cite[Lemma 4.3]{CoveringChang}}]\label{stabilizing_parameters}
        Whenever $\mathcal{S}$ is a genericity iterate of $\mathcal{R}$ above $\eta$, if $a\in\mathrm{ran}(\pi^{\mathcal{R}, \eta}_{\mathcal{V}_{\mathcal{R}}, \infty})$ then $\pi_{\mathcal{V}_{\mathcal{R}}, \mathcal{V}_{\mathcal{S}}}(a)=a$.
        In particular, $\pi_{\mathcal{V}_{\mathcal{R}}, \mathcal{V}_{\mathcal{S}}}((\alpha, \vec{\beta}, \overline{\gamma}))=(\alpha, \vec{\beta}, \overline{\gamma})$ and $\pi_{\mathcal{V}_{\mathcal{R}}, \mathcal{V}_{\mathcal{S}}}(Y(i))=Y(i)$ for any $i<\omega$.
    \end{subclaim}
    \begin{proof}
        Let $a_{\mathcal{R}}=(\pi^{\mathcal{R}, \eta}_{\mathcal{V}_{\mathcal{R}}, \infty})^{-1}(a)$.
        Then we have
        \[
        \pi_{\mathcal{V}_{\mathcal{R}}, \mathcal{V}_{\mathcal{S}}}(a)
        =\pi_{\mathcal{V}_{\mathcal{R}}, \mathcal{V}_{\mathcal{S}}}(\pi^{\mathcal{R}, \eta}_{\mathcal{V}_{\mathcal{R}}, \infty}(a_{\mathcal{R}}))
        =\pi^{\mathcal{S}, \eta}_{\mathcal{V}_{\mathcal{S}}, \infty}(\pi_{\mathcal{V}_{\mathcal{R}}, \mathcal{V}_{\mathcal{S}}}(a_{\mathcal{R}}))
        =\pi^{\mathcal{R}, \eta}_{\mathcal{V}_{\mathcal{R}}, \infty}(a_{\mathcal{R}})=a.
        \]
        The second equality follows from the elementarity of $\pi_{\mathcal{V}_{\mathcal{R}}, \mathcal{V}_{\mathcal{S}}}$ and the third equality holds since $\pi^{\mathcal{R}, \eta}_{\mathcal{V}_{\mathcal{R}}, \infty}=\pi^{\mathcal{S}, \eta}_{\mathcal{V}_{\mathcal{S}}, \infty}\circ\pi_{\mathcal{V}_{\mathcal{R}}, \mathcal{V}_{\mathcal{S}}}$ by \cref{pres_of_M_infty}.
    \end{proof}

    Let $k\subset\mathrm{Col}(\omega, {<}\delta)$ be a maximal $\mathcal{R}$-generic such that $k\in\mathcal{Q}'[h']$.
    Since $Y=\langle Y(i)\mid i<\omega\rangle\in {}^{\omega}\zeta$ for some $\zeta<\delta^{\mathcal{Q}', \eta}_{\infty}$, we can fix a $\xi_Y<\delta$ such that $\mathrm{ran}(Y)\subset\pi^{\mathcal{R}, \eta}_{\mathcal{R}, \infty}[\xi_Y]$.
    Let $y\in\mathbb{R}^*_{k}$ code a function $f_y\colon\omega\to\xi_Y$ such that for any $i\in\omega$,
    \[
    Y(i)=\pi^{\mathcal{R}, \eta}_{\mathcal{R}, \infty}(f_y(i)).
    \]
    Also, since $\{\mathrm{Code}(\Sigma^g_{\mathcal{P}\vert\xi})\mid\xi<\delta\}$\footnote{For an iteration strategy $\Sigma$ for a countable structure, $\mathrm{Code}(\Sigma)$ is a set of reals that canonically codes $\Sigma\uphar\mathrm{HC}$, where $\mathrm{HC}$ denotes the set of hereditarily countable sets. See \cite[Section 2.7]{CPMP}.} is Wadge cofinal in $\Gamma^*_g$ as argued in the proof of \cite[Proposition 4.2]{CoveringChang}, we may assume that $Z=\mathrm{Code}(\Sigma^g_{\mathcal{P}\vert\xi_Z})$ for some $\xi_Z<\delta$.
    Let $z\in\mathbb{R}^*_k$ be a real coding $\pi_{\mathcal{P}, \mathcal{R}}\uphar(\mathcal{P}\vert\xi_Z)\colon\mathcal{P}\vert\xi_Z\to\mathcal{R}\vert\pi_{\mathcal{P}, \mathcal{R}}(\xi_Z)$.
    Note that $Z$ can be defined from $z$ as the code of the $\pi_{\mathcal{P}, \mathcal{R}}$-pullback of the strategy for $\mathcal{R}\vert\pi_{\mathcal{P}, \mathcal{R}}(\xi_Z)$ determined by the strategy predicate of $\mathcal{R}$.
    Then we can fix some $\xi\in[\max\{\eta, \xi_Y, \pi_{\mathcal{P}, \mathcal{R}}(\xi_Z)\}, \delta)$ such that $x, y, z\in\mathcal{R}[k\uphar\xi]$.

    We now begin the main argument in the proof of \cref{club_dichotomy}.
    Variants of this argument will be used repeatedly throughout the paper, and when we refer to ``as in the proof of \cref{club_dichotomy},'' we mean the argument that follows.
    To show (1) in the statement of \cref{club_dichotomy}, suppose that $\sigma_{\mathcal{R}, \xi}\in A$.
    Then
    \[
    \mathcal{V}_{\mathcal{R}}[x, y, z]\models\phi^*[\mathrm{ran}(\pi^{\mathcal{R}, \xi}_{\mathcal{R}^*, \infty})\cap\alpha, x, y, z, \eta, \delta, \vec{\beta}, \overline{\gamma}],
    \]
    where the formula $\phi^*$ is the conjunction of the following.\footnote{In general, for any transitive model $M$ of $\mathsf{ZF}$ and any subset $a$ of an element of $M$, $M[a]$ denotes a transitive minimal model $N$ of $\mathsf{ZF}$ such that $M\cup\{a\}\subset N$ and $M\cap\ord=N\cap\ord$, if such an $N$ exists.
    So $\mathcal{V}_{\mathcal{R}}[x, y, z]$ makes sense because $x, y, z$ are in some generic extension of $\mathcal{R}$.}
    \begin{itemize}
        \item $y$ codes a function $f\colon\omega\to\zeta$ for some $\zeta<\delta$, and
        \item $z$ codes an elementary embedding $\pi\colon\mathcal{M}\to\mathcal{N}$ for some lbr hod premice $\mathcal{M}$ and $\mathcal{N}$ with $\mathcal{N}\trianglelefteq\mathcal{R}$, and 
        \item letting $Y=\langle\pi^{\mathcal{R}, \eta}_{\mathcal{R}, \infty}(f(i))\mid i\in\omega\rangle$ and $Z$ be the code of the $\pi$-pullback of the strategy for $\mathcal{N}$ determined by the strategy predicate of $\mathcal{R}$, the empty condition of $\mathrm{Col}(\omega, {<}\delta)$ forces that
        \[
        (\mathsf{CDM}^+(\mathcal{R}, \eta)\vert\overline{\gamma}; {\in}, \vec{\mu})\models\phi[\mathrm{ran}(\pi^{\mathcal{R}, \xi}_{\mathcal{R}^*, \infty})\cap\alpha, Y, Z, x, \vec{\beta}].
        \]
    \end{itemize}

    Now let $\mathcal{R}^*\in\mathcal{F}^*_k(\mathcal{R}, \xi)$ be such that $\alpha\in\mathrm{ran}(\pi^{\mathcal{R}, \xi}_{\mathcal{R}^*, \infty})$.
    Since $\alpha<\delta^{\mathcal{Q}', \eta}_{\infty}=\delta^{\mathcal{R}, \xi}_{\infty}$ as $(\mathcal{Q}, \eta)$ stabilizes $\delta_{\infty}$, we have $\alpha_{\mathcal{R}^*}:=(\pi^{\mathcal{R}, \xi}_{\mathcal{R}^*, \infty})^{-1}(\alpha)<\delta$.
    By \cref{easy_lemma}(1), there is an iterate $\mathcal{S}$ of $\mathcal{R}^*$ in $\mathcal{R}[k]$ such that $\mathcal{S}$ is a genericity iterate of $\mathcal{R}$, $\mathcal{T}_{\mathcal{R}, \mathcal{R}^*}{}^{\frown}\mathcal{T}_{\mathcal{R}^*, \mathcal{S}}$ is normal and $\crit(\pi_{\mathcal{R}^*, \mathcal{S}})>\alpha_{\mathcal{R}^*}$.
    By \cref{stabilizing_parameters}, the elementarity of $\pi^+_{\mathcal{V}_{\mathcal{R}}, \mathcal{V}_{\mathcal{S}}}\colon\mathcal{V}_{\mathcal{R}}[x, y, z]\to\mathcal{V}_{\mathcal{S}}[x, y, z]$, which is the canonical liftup of $\pi_{\mathcal{V}_{\mathcal{R}}, \mathcal{V}_{\mathcal{S}}}$, implies that
    \[
    \mathcal{V}_{\mathcal{S}}[x, y, z]\models\phi^*[\mathrm{ran}(\pi^{\mathcal{S}, \xi}_{\mathcal{S}, \infty})\cap\alpha, x, y, z, \eta, \delta, \vec{\beta}, \overline{\gamma}].
    \]
    Then the following observations imply $\sigma_{\mathcal{S}, \xi}\in A$:
    \begin{itemize}
        \item Since $\mathcal{S}$ is a genericity iterate of $\mathcal{R}$ above $\eta$,
        \[
        (\mathsf{CDM}^{+}(\mathcal{R}, \eta)\vert\overline{\gamma})^{\mathcal{V}_{\mathcal{R}}[k]}=(\mathsf{CDM}^{+}(\mathcal{S}, \eta)\vert\overline{\gamma})^{\mathcal{V}_{\mathcal{S}}[l]},
        \]
        where $l\subset\mathrm{Col}(\omega, {<}\delta)$ is a maximal $\mathcal{S}$-generic, by \cref{pres_of_CDM_plus} and the induction hypothesis.
        \item Let $Y'=\langle\pi^{\mathcal{S}, \eta}_{\mathcal{S}, \infty}(f_y(i))\mid i\in\omega\rangle$, where $f_y\colon\omega\to\xi_Y$ is the function coded by $y$.
        Then
        \[
        Y'=\pi_{\mathcal{V}_{\mathcal{R}}, \mathcal{V}_{\mathcal{S}}}(Y)=Y.
        \]
        The first equality here follows from \cref{pres_of_M_infty} and $\crit(\pi_{\mathcal{R}, \mathcal{S}})>\xi_Y$.
        The second equality holds by \cref{stabilizing_parameters}.
        \item Since $\crit(\pi_{\mathcal{R}, \mathcal{S}})>\pi_{\mathcal{P}, \mathcal{R}}(\xi_Z)$, $\mathcal{R}\vert\pi_{\mathcal{P}, \mathcal{R}}(\xi_Z)=\mathcal{S}\vert\pi_{\mathcal{P}, \mathcal{R}}(\xi_Z)$, so $z$ codes an elementary embedding into an initial segment of $\mathcal{S}$.
        Also, as $\Sigma_{\mathcal{R}\vert\pi_{\mathcal{P}, \mathcal{R}}(\xi_Z)}=\Sigma_{\mathcal{S}\vert\pi_{\mathcal{P}, \mathcal{R}}(\xi_Z)}$, the same $Z$ is obtained from $z$ over both $\mathcal{V}_{\mathcal{R}}$ and $\mathcal{V}_{\mathcal{S}}$.
    \end{itemize}
    As $\crit(\pi_{\mathcal{R}^*, \mathcal{S}})>\alpha_{\mathcal{R}^*}$ and $\pi^{\mathcal{R}, \xi}_{\mathcal{R}^*, \infty}=\pi^{\mathcal{S}, \xi}_{\mathcal{S}, \infty}\circ\pi_{\mathcal{R}^*, \mathcal{S}}$, we have
    \[
    \sigma_{\mathcal{R}^*, \xi}=\pi^{\mathcal{R}, \xi}_{\mathcal{R}^*, \infty}[\alpha_{\mathcal{R}^*}]=\pi^{\mathcal{S}, \xi}_{\mathcal{S}, \infty}[\alpha_{\mathcal{R}^*}]=\sigma_{\mathcal{S}, \xi}.
    \]
    So we get $\sigma_{\mathcal{R}^*, \xi}\in A$.
    Therefore, $C_{\mathcal{R}, \xi}\subset A$.
    The same argument when $\phi^*$ with $\neg\phi^*$ shows (2).
\end{proof}

\begin{claim}\label{agreement_between_club_filters}
    $\mu_{\alpha}\cap\mathsf{CDM}^+(\mathcal{Q}', \eta)\vert\gamma=\mu_{\alpha}^{\mathcal{V}_{\mathcal{Q'}}[h']}\cap\mathsf{CDM}^+(\mathcal{Q}', \eta)\vert\gamma$.
\end{claim}
\begin{proof}
    Let $A\subset\power_{\omega_1}(\alpha)$ be in $\mathsf{CDM}^+(\mathcal{Q}', \eta)\vert\gamma$.
    Take a genericity iterate $\mathcal{R}$ of $\mathcal{Q}'$, a maximal $\mathcal{R}$-generic $k\subset\mathrm{Col}(\omega, {<}\delta)$, and $\xi\in[\eta, \delta)$ such that the conclusion of \cref{club_dichotomy} holds.
    Note that $C_{\mathcal{R}, \xi}$ contains a club subset of $\power_{\omega_1}(\alpha)$ in $\mathcal{V}_{\mathcal{R}}[k]\subset\mathcal{V}_{\mathcal{Q}'}[h']$ by \cref{finding_clubs}.
    Therefore, if $A\in\mu_{\alpha}^{\mathcal{V}_{\mathcal{Q}'}[h']}$, then (1) of \cref{club_dichotomy} holds and thus $A\in\mu_{\alpha}$.
    On the other hand, if $A\notin\mu_{\alpha}^{\mathcal{V}_{\mathcal{Q}'}[h']}$, then (2) of \cref{club_dichotomy} holds and thus $\power_{\omega_1}(\alpha)\setminus A\in\mu_{\alpha}$.
    Since $\mu_{\alpha}$ is a filter, $A\notin\mu_{\alpha}$.
\end{proof}

This completes the proof of \cref{locality_of_CDM_plus}.
\end{proof}

\begin{thm}\label{spct_meas_in_CDM_plus}
    Let $\mathcal{Q}$ be a genericity iteration of $\mathcal{P}$ and let $\eta<\delta$ be such that $(\mathcal{Q}, \eta)$ stabilizes $\delta_{\infty}$.
    Then for each $\alpha\in[\delta, \delta^{\mathcal{Q}, \eta}_{\infty})$, $\mu_{\alpha}\cap\mathsf{CDM}^{+}(\mathcal{Q}, \eta)$ is a supercompact measure on $\power_{\omega_1}(\alpha)$ in $\mathsf{CDM}^{+}(\mathcal{Q}, \eta)$.
\end{thm}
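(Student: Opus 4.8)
The plan is to derive the statement from the machinery already developed for \cref{locality_of_CDM_plus}; almost all of the work is contained in \cref{finding_clubs,club_dichotomy,agreement_between_club_filters}. Write $M=\mathsf{CDM}^{+}(\mathcal{Q},\eta)$ and fix $\alpha\in[\delta,\delta^{\mathcal{Q},\eta}_{\infty})$. First I would dispose of the ``easy'' properties. Since $\alpha<\delta^{\mathcal{Q},\eta}_{\infty}$, the generating set $\bigcup_{\beta<\delta^{\mathcal{Q},\eta}_{\infty}}{}^{\omega}\beta$ of $M$ contains $({}^{\omega}\alpha)^{\mathcal{V}[g]}$, so $M$ and $\mathcal{V}[g]$ have the same ${}^{\omega}\alpha$, hence the same $\power_{\omega_1}(\alpha)$, the same $\subseteq$-increasing $\omega$-sequences drawn from $\power_{\omega_1}(\alpha)$, and they compute countable intersections and diagonal intersections indexed by elements of $\alpha$ in the same way. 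As $M\subseteq\mathcal{V}[g]$ and $\mu_{\alpha}$ is a countably complete, normal, fine filter on $\power_{\omega_1}(\alpha)$ in $\mathcal{V}[g]$, it follows that $\mu_{\alpha}\cap M$ is a countably complete, normal, fine filter on $\power_{\omega_1}(\alpha)$ in $M$: any potential witness to a failure (a countable family from $M$, an $\alpha$-indexed family from $M$, or a point $x\in\alpha$) lies in $\mathcal{V}[g]$, the relevant intersection or diagonal intersection is the same set computed in $M$ and in $\mathcal{V}[g]$ and lies in $M$, and the corresponding closure property of $\mu_{\alpha}$ in $\mathcal{V}[g]$ applies.

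It remains to see that $\mu_{\alpha}\cap M$ is an ultrafilter in $M$, which is where the genericity-iteration apparatus enters. Fix a genericity iterate $\mathcal{Q}'$ of $\mathcal{Q}$ above $\eta$ together with a maximal $\mathcal{Q}'$-generic $h'$ in $\mathcal{V}[g]$; by \cref{pres_of_CDM_plus} we have $\mathsf{CDM}^{+}(\mathcal{Q}',\eta)=M$, and since $(\mathcal{Q},\eta)$ stabilizes $\delta_{\infty}$ the claims in the proof of \cref{locality_of_CDM_plus} are available for this $\mathcal{Q}'$. Let $A\in M$ with $A\subseteq\power_{\omega_1}(\alpha)$, pick $\gamma$ with $A\in M\vert\gamma$, and apply \cref{club_dichotomy}: there are a genericity iterate $\mathcal{R}$ of $\mathcal{Q}'$ above $\eta$ and $\xi\in[\eta,\delta)$ such that, for a maximal $\mathcal{R}$-generic $k$, either $\sigma_{\mathcal{R},\xi}\in A$ and $C_{\mathcal{R},\xi}\subseteq A$, or $\sigma_{\mathcal{R},\xi}\notin A$ and $C_{\mathcal{R},\xi}\subseteq\power_{\omega_1}(\alpha)\setminus A$. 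By \cref{finding_clubs}, $C_{\mathcal{R},\xi}$ contains a club subset of $\power_{\omega_1}(\alpha)$ lying in $\mathcal{V}_{\mathcal{R}}[k]$. Now $\alpha<\delta^{\mathcal{R},\eta}_{\infty}=\delta^{\mathcal{Q},\eta}_{\infty}$ by \cref{pres_of_M_infty}, and the ``catching'' argument used throughout the proof of \cref{locality_of_CDM_plus} shows $\power_{\omega_1}^{\mathcal{V}[g]}(\alpha)\subseteq\mathcal{V}_{\mathcal{R}}[k]$ (by countable directedness each $\sigma\in\power_{\omega_1}(\alpha)$ is the image, under a direct limit map belonging to $\mathcal{V}_{\mathcal{R}}[k]$, of a countable set of ordinals below $\delta$, and the latter is coded by a real in $\mathbb{R}^{*}_{k}=\mathbb{R}^{*}_{g}$), so $\power_{\omega_1}(\alpha)$ is computed the same in $\mathcal{V}_{\mathcal{R}}[k]$ and in $\mathcal{V}[g]$; hence the club above is a genuine club of $\mathcal{V}[g]$ and belongs to $\mu_{\alpha}$. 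Therefore $A\in\mu_{\alpha}$ or $\power_{\omega_1}(\alpha)\setminus A\in\mu_{\alpha}$, i.e., one of $A$ and $\power_{\omega_1}(\alpha)\setminus A$ lies in $\mu_{\alpha}\cap M$. This is precisely the dichotomy already extracted in the proof of \cref{agreement_between_club_filters}.

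The real difficulty thus lies in the quoted lemmas rather than in this theorem, and I expect the single point that still needs care is the last one: the absoluteness of $\power_{\omega_1}(\alpha)$ across the symmetric collapses $\mathcal{V}[g]$, $\mathcal{V}_{\mathcal{Q}'}[h']$, $\mathcal{V}_{\mathcal{R}}[k]$ and the model $M$, for $\alpha$ below the stabilized $\delta_{\infty}$. This is the ``locality'' phenomenon underlying \cref{locality_of_CDM,locality_of_CDM_plus}, and it is what simultaneously lets $\mu_{\alpha}\cap M$ inherit countable completeness, normality, and fineness and guarantees that the clubs produced inside the iterate models remain clubs in $\mathcal{V}[g]$. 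Everything else is routine assembly of \cref{finding_clubs,club_dichotomy,agreement_between_club_filters}.
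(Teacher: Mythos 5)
Your proposal matches the paper's proof: the paper likewise gets the ultrafilter property from the dichotomy established in \cref{finding_clubs,club_dichotomy} (i.e., from the proof of \cref{agreement_between_club_filters}), and obtains fineness, countable completeness, and normality of $\mu_{\alpha}\cap\mathsf{CDM}^{+}(\mathcal{Q},\eta)$ by transferring the corresponding properties of $\mu_{\alpha}$ down from $\mathcal{V}[g]$. The one point you flag for extra care --- that $\power_{\omega_1}(\alpha)$ and the relevant clubs are computed the same in $\mathcal{V}[g]$, $\mathcal{V}_{\mathcal{R}}[k]$, and $\mathsf{CDM}^{+}(\mathcal{Q},\eta)$ for $\alpha<\delta^{\mathcal{Q},\eta}_{\infty}$ --- is indeed the right one; it is cleanest to cite \cref{locality_of_CDM} directly for this rather than the somewhat circular ``catching'' sketch, but the paper leaves it implicit as well.
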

\begin{proof}
    It is obvious that $\mu_{\alpha}\cap\mathsf{CDM}^{+}(\mathcal{Q}, \eta)$ is a filter.
    The proof of \cref{agreement_between_club_filters} shows that $\mu_{\alpha}\cap\mathsf{CDM}^{+}(\mathcal{Q}, \eta)$ is an ultrafilter on $\power_{\omega_1}(\alpha)$. Fineness of $\mu_{\alpha}\cap\mathsf{CDM}^{+}(\mathcal{Q}, \eta)$ follows from fineness of $\mu_{\alpha}$ because for any $\xi\in\alpha$, the set $\{\sigma\in\power_{\omega_1}(\alpha)\mid\xi\in\sigma\}$ is in $\mathsf{CDM}^+(\mathcal{Q}, \eta)$. 
    Also, it is easy to see countable completeness of $\mu_{\alpha}\cap\mathsf{CDM}^{+}(\mathcal{Q}, \eta)$: Whenever $\langle A_n\mid n<\omega\rangle\in\mathsf{CDM}^+(\mathcal{Q}, \eta)$ is such that $A_n\in\mu_{\alpha}\cap\mathsf{CDM}^{+}(\mathcal{Q}, \eta)$ for all $n<\omega$, then $\bigcap_{n<\omega}A_n\in\mathsf{CDM}^+(\mathcal{Q}, \eta)$ and it is also in $\mu_{\alpha}$ by countable completeness of $\mu_{\alpha}$.
    Similarly, normality of $\mu_{\alpha}\cap\mathsf{CDM}^{+}(\mathcal{Q}, \eta)$ follows from normality of $\mu_{\alpha}$.
    Therefore, $\mu_{\alpha}\cap\mathsf{CDM}^{+}(\mathcal{Q}, \eta)$ is a supercompact measure in $\mathsf{CDM}^+(\mathcal{Q}, \eta)$.
\end{proof}

\begin{rem}
    Woodin showed that $\mathsf{AD}^+$ implies that the club filter on $\power_{\omega_1}(\alpha)$ is a supercompact measure for any $\alpha<\Theta$.\footnote{To the best of our knowledge, the full proof of this theorem is not written anywhere.}
    So, \cref{spct_meas_in_CDM_plus} is not new if $\delta^{\mathcal{Q}, \eta}_{\infty}=\Theta^{\mathsf{CDM}^{+}(\mathcal{Q}, \eta)}$.
    However, we will show as \cref{main_thm_2} below that $\delta^{\mathcal{Q}, \eta}_{\infty}>\Theta^{\mathsf{CDM}^{+}(\mathcal{Q}, \eta)}$ assuming that $\delta$ is a limit of Woodin cardinals that is also a limit of ${<}\delta$-strong cardinals.
\end{rem}

\begin{rem}\label{rem_on_def_of_CDM_plus}
If we would have defined $\mathsf{CDM}^+(\mathcal{Q}, \eta)$ as $\mathsf{CDM}$ with the club measures on $\power_{\omega_1}({}^{\omega}\alpha)$ for $\alpha<\delta^{\mathcal{Q}, \eta}_{\infty}$, then \cref{finding_clubs} would fail: Let $\sigma_{\mathcal{R}^*, \xi} = \mathrm{ran}(\pi^{\mathcal{R}, \xi}_{\mathcal{R}^*, \infty})\cap{}^{\omega}\alpha$.
Then the set in \cref{finding_clubs} cannot be unbounded because if $f\in{}^{\omega}\alpha\setminus\mathcal{M}_{\infty}$ then $f\notin\sigma_{\mathcal{R}^*, \xi}$ for any $\mathcal{R}^*$.
Also, if one changes the definition of $\sigma_{\mathcal{R}^*, \xi}$ to take this issue into account, then closedness would be a new problem.
\end{rem}

\begin{ques}
    Is there a variant of $\mathsf{CDM}$ where $\omega_1$ is ${}^{\omega}\alpha$-supercompact for all $\alpha<\delta^{\mathcal{Q}, \eta}_{\infty}$?
\end{ques}

\subsection{Proof of Determinacy}

First, note that by \cref{pres_of_CDM_plus,locality_of_CDM_plus}, we have the following.

\begin{lem}\label{pres_of_local_CDM_plus}
Let $\mathcal{Q}$ be a genericity iterate of $\mathcal{P}$ and let $\eta<\delta$ be such that $(\mathcal{Q}, \eta)$ stabilizes $\delta_{\infty}$.
Also, let $h\subset\mathrm{Col}(\omega, {<}\delta)$ be a maximal $\mathcal{Q}$-generic.
Then whenever $\mathcal{R}$ is a genericity iterate of $\mathcal{Q}$ above $\eta$ and $k\subset\mathrm{Col}(\omega, {<}\delta)$ is a maximal $\mathcal{R}$-generic,
\[
\mathsf{CDM}^{+}(\mathcal{Q}, \eta)=\mathsf{CDM}^{+}(\mathcal{Q}, \eta)^{\mathcal{V}_{\mathcal{Q}}[h]}=\mathsf{CDM}^+(\mathcal{R}, \eta)^{\mathcal{V}_{\mathcal{R}}[k]}.
\]
\end{lem}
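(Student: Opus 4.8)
The plan is to deduce the two equalities from \cref{pres_of_CDM_plus} together with two applications of \cref{locality_of_CDM_plus}, using $\mathcal{R}$ as an intermediary. Writing $\mathsf{CDM}^{+}(\mathcal{Q},\eta)$ and $\mathsf{CDM}^{+}(\mathcal{R},\eta)$ for the models computed in $\mathcal{V}[g]$, I would establish the chain
\[
\mathsf{CDM}^{+}(\mathcal{Q},\eta)^{\mathcal{V}_{\mathcal{Q}}[h]}=\mathsf{CDM}^{+}(\mathcal{Q},\eta)=\mathsf{CDM}^{+}(\mathcal{R},\eta)=\mathsf{CDM}^{+}(\mathcal{R},\eta)^{\mathcal{V}_{\mathcal{R}}[k]}.
\]
The middle equality is immediate from \cref{pres_of_CDM_plus}, since $\mathcal{R}$ is a genericity iterate of $\mathcal{Q}$ above $\eta$. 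The leftmost equality is \cref{locality_of_CDM_plus} applied to the stabilizing pair $(\mathcal{Q},\eta)$ with $\mathcal{Q}':=\mathcal{Q}$ and $h':=h$ (both lying trivially in $\mathcal{V}_{\mathcal{Q}}[h]$). The rightmost equality is a second application of \cref{locality_of_CDM_plus}, this time to the pair $(\mathcal{R},\eta)$ with $\mathcal{Q}':=\mathcal{R}$ and $h':=k$. Concatenating the three equalities yields exactly the asserted identity $\mathsf{CDM}^{+}(\mathcal{Q},\eta)=\mathsf{CDM}^{+}(\mathcal{Q},\eta)^{\mathcal{V}_{\mathcal{Q}}[h]}=\mathsf{CDM}^{+}(\mathcal{R},\eta)^{\mathcal{V}_{\mathcal{R}}[k]}$.

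Before the second application of \cref{locality_of_CDM_plus} I need the short preliminary observation that $(\mathcal{R},\eta)$ also stabilizes $\delta_{\infty}$, which is not among the hypotheses. This follows from \cref{easy_lemma}(2) and the stabilization of $(\mathcal{Q},\eta)$: given any genericity iterate $\mathcal{S}$ of $\mathcal{R}$ above $\eta$ and any $\xi\in[\eta,\delta)$, \cref{easy_lemma}(2) shows $\mathcal{S}$ is a genericity iterate of $\mathcal{Q}$ above $\eta$, hence $\delta^{\mathcal{S},\xi}_{\infty}=\delta^{\mathcal{Q},\eta}_{\infty}$; specializing to $\mathcal{S}=\mathcal{R}$ and $\xi=\eta$ gives $\delta^{\mathcal{R},\eta}_{\infty}=\delta^{\mathcal{Q},\eta}_{\infty}$, so $\delta^{\mathcal{R},\eta}_{\infty}=\delta^{\mathcal{S},\xi}_{\infty}$ for all such $\mathcal{S},\xi$, as required.

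The only point needing a word of care — and the step I would flag as the main, albeit very mild, obstacle, since all the genuinely technical content is already packaged into \cref{pres_of_CDM_plus} and \cref{locality_of_CDM_plus} — is that $\mathcal{Q}$ and $\mathcal{R}$ each count as a genericity iterate of \emph{themselves} above $\eta$, so that \cref{locality_of_CDM_plus} literally applies with $\mathcal{Q}'\in\{\mathcal{Q},\mathcal{R}\}$. For $\mathcal{R}$ this is witnessed by the trivial iteration tree together with the windows (above $\eta$) that witness $\mathcal{R}$ being a genericity iterate of $\mathcal{Q}$ above $\eta$; for $\mathcal{Q}$ one uses the trivial iteration tree with witnessing windows above $\eta$, which are available once $\mathcal{Q}$ is taken to be a genericity iterate of $\mathcal{P}$ above $\eta$ when the stabilizing pair is fixed. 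The remaining membership conditions $\mathcal{Q}'\in\mathcal{V}_{\mathcal{Q}'}[h']$ and $h'\in\mathcal{V}_{\mathcal{Q}'}[h']$ in the hypothesis of \cref{locality_of_CDM_plus} are then trivially met by $(\mathcal{Q},h)$ and by $(\mathcal{R},k)$, and the chaining above completes the proof.
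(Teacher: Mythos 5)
Your proposal is correct and follows essentially the same route as the paper, which likewise obtains the lemma directly by combining \cref{pres_of_CDM_plus} with \cref{locality_of_CDM_plus}; you simply spell out the chaining, the self-application of \cref{locality_of_CDM_plus} with $\mathcal{Q}'=\mathcal{Q}$ (resp.\ $\mathcal{Q}'=\mathcal{R}$), and the easy check that $(\mathcal{R},\eta)$ also stabilizes $\delta_{\infty}$, all of which the paper leaves implicit.
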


Thanks to \cref{pres_of_local_CDM_plus}, we can get the following theorem by the proof of the main theorem of \cite{CoveringChang}.

\begin{thm}\label{sets_of_reals_in_CDM_plus}
Let $\mathcal{Q}$ be a genericity iterate of $\mathcal{P}$ and 
let $\eta<\delta$ be such that $(\mathcal{Q}, \eta)$ stabilizes $\delta_{\infty}$.
Then
\[
\mathsf{CDM}^{+}(\mathcal{Q}, \eta)\cap\power(\mathbb{R}^*_g)=\Gamma^*_g.
\]
\end{thm}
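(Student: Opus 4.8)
The plan is to show the two inclusions $\Gamma^*_g \subseteq \mathsf{CDM}^+(\mathcal{Q}, \eta) \cap \power(\mathbb{R}^*_g)$ and $\mathsf{CDM}^+(\mathcal{Q}, \eta) \cap \power(\mathbb{R}^*_g) \subseteq \Gamma^*_g$ separately, reusing the machinery of \cite{CoveringChang}. The first inclusion is the easier direction: since $\{\mathrm{Code}(\Sigma^g_{\mathcal{P}\vert\xi}) \mid \xi < \delta\}$ is Wadge cofinal in $\Gamma^*_g$ (as recalled in the proof of \cref{club_dichotomy}) and $\Gamma^*_g$ is closed under Wadge reducibility, it suffices to see that each $\mathrm{Code}(\Sigma^g_{\mathcal{P}\vert\xi})$ lies in $\mathsf{CDM}^+(\mathcal{Q}, \eta)$; but these strategy codes are already in $\Gamma^*_g$ (they are among the generators of $\mathsf{DM}$), and $\Gamma^*_g$ is one of the parameters from which $\mathsf{CDM}^+(\mathcal{Q}, \eta)$ is built, so $\Gamma^*_g \subseteq \mathsf{CDM}^+(\mathcal{Q}, \eta)$ outright; and trivially every element of $\Gamma^*_g$ is a subset of $\mathbb{R}^*_g$.

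The substantive direction is $\mathsf{CDM}^+(\mathcal{Q}, \eta) \cap \power(\mathbb{R}^*_g) \subseteq \Gamma^*_g$. Here I would follow the template of the main theorem of \cite{CoveringChang}, using \cref{pres_of_local_CDM_plus} as the crucial new input. Fix $A \subseteq \mathbb{R}^*_g$ with $A \in \mathsf{CDM}^+(\mathcal{Q}, \eta)$; then $A$ is definable over some level $\mathsf{CDM}^+(\mathcal{Q}, \eta)\vert\overline{\gamma}$ from parameters $Y \in {}^{\omega}\zeta$ for some $\zeta < \delta^{\mathcal{Q},\eta}_\infty$, some $Z \in \Gamma^*_g$, some real $x \in \mathbb{R}^*_g$, and a finite tuple of ordinals $\vec{\beta}$. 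Using the "catching" argument from the proof of \cref{club_dichotomy} — find a genericity iterate $\mathcal{R}$ of $\mathcal{Q}$ above $\eta$ with $\{\vec\beta,\overline\gamma\}\cup\mathrm{ran}(Y)\subseteq\mathrm{ran}(\pi^{\mathcal{R},\eta}_{\mathcal{V}_{\mathcal{R}},\infty})$, together with \cref{stabilizing_parameters} to freeze these parameters under further iteration, and coding $Y$ and $Z$ by reals $y,z$ via the $\pi_{\mathcal{P},\mathcal{R}}$-images of the relevant functions and strategies — one shows that membership of a real $w \in \mathbb{R}^*_g$ in $A$ is, after fixing $\xi < \delta$ large enough that $x,y,z,w \in \mathcal{R}[k\uphar\xi]$ for a maximal $\mathcal{R}$-generic $k$, equivalent to a statement of the form $\mathcal{V}_{\mathcal{R}}[x,y,z,w] \models \phi^*[\ldots]$, where $\phi^*$ asserts that the empty condition of $\mathrm{Col}(\omega,{<}\delta)$ forces the defining formula for $A$ to hold of $w$ over $\mathsf{CDM}^+(\mathcal{R},\eta)\vert\overline\gamma$. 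The point of \cref{pres_of_local_CDM_plus} is that this forced value is genuinely the correct one, i.e.\ $\mathsf{CDM}^+(\mathcal{R},\eta)^{\mathcal{V}_{\mathcal{R}}[k]} = \mathsf{CDM}^+(\mathcal{Q},\eta)$, so the generic-over-$\mathcal{R}$ computation of $A$ agrees with the real one. Since $\phi^*$ is a statement about $\mathcal{V}_{\mathcal{R}}$ with real parameters and $\mathcal{V}_{\mathcal{R}}$ is generically interpretable from $\mathrm{Code}(\Sigma^g_{\mathcal{R}\vert\xi'})$ for appropriate $\xi' < \delta$ (generic interpretability being part of excellence), the set of $w$ satisfying it is Wadge-reducible to a member of $\Gamma^*_g$, hence $A \in \Gamma^*_g$.

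The main obstacle, as in \cite{CoveringChang}, is organizing the definability bookkeeping so that all the parameters — the ordinal parameters $\vec\beta,\overline\gamma$, the sequence $Y$ of ordinals below $\delta^{\mathcal{Q},\eta}_\infty$, and the strategy-code parameter $Z$ — can be simultaneously captured inside the range of a single direct-limit map and then remain fixed under further genericity iterations, so that the equivalence "$w \in A$ iff $\mathcal{V}_{\mathcal{R}}[x,y,z,w] \models \phi^*$" is iteration-independent; this is exactly where \cref{stabilizing_parameters} and the stabilization hypothesis $(\mathcal{Q},\eta)$ on $\delta_\infty$ do the work, ensuring $\delta^{\mathcal{Q},\eta}_\infty = \delta^{\mathcal{R},\xi}_\infty$ so that ordinals below it stay below it and are eventually caught. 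A secondary but routine point is verifying that $\phi^*$, which quantifies over the $\mathrm{Col}(\omega,{<}\delta)$-forcing relation over $\mathcal{V}_{\mathcal{R}}$, is itself computed correctly inside a generic extension of $\mathcal{R}$ by a real — this is precisely generic interpretability — and that the resulting real set is in $\Gamma^*_g$ because the strategy codes $\mathrm{Code}(\Sigma^g_{\mathcal{P}\vert\xi})$ are Wadge cofinal there. Granting \cref{pres_of_local_CDM_plus}, none of these steps requires any idea not already present in \cite{CoveringChang}; the theorem follows by transcribing that argument with $\mathsf{CDM}$ replaced by $\mathsf{CDM}^+(\mathcal{Q},\eta)$ throughout.
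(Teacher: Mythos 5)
Your proposal is correct and follows essentially the same route as the paper: the easy inclusion is dispatched by noting $\Gamma^*_g$ is a generating parameter of the model, and the substantive inclusion is the \cite{CoveringChang} argument with \cref{pres_of_local_CDM_plus} supplying the new locality input, the parameter-catching genericity iterate and \cref{stabilizing_parameters} freezing $\vec\beta,\overline\gamma,Y,Z$, and the reduction of membership in $A$ to a statement about $\mathcal{V}_{\mathcal{R}}[x,y,z]$. The only step you leave implicit that the paper spells out is the term-relation argument (the analogue of \cite[Lemma 4.4]{CoveringChang}): one must pass to standard $\mathrm{Col}(\omega,\delta')$-terms $\tau_{\mathcal{R}^*}$ over iterates $\mathcal{R}^*$ making a given real generic and check $\tau_{\mathcal{R}^*}=\tau_{\mathcal{S}}$ under further genericity iterations, which is what actually makes $A$ projective in $\mathrm{Code}(\Sigma^g_{\mathcal{R}\vert\delta'})$; your appeal to generic interpretability and Wadge cofinality of the strategy codes is the right mechanism for this.
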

\begin{proof}
We work in $\mathcal{V}(\mathbb{R}^*_g)$.
Let $A\subset\mathbb{R}^*_g$ be in $\mathsf{CDM}^{+}(\mathcal{Q}, \eta)$.
Then for some formula $\phi$ in the language for $\mathsf{CDM}^+(\mathcal{Q}, \eta)$ and for some ordinal $\gamma$,
\[
A=\{u\in\mathbb{R}^*_g\mid(\mathsf{CDM}^{+}(\mathcal{Q}, \eta)\vert\gamma; {\in}, \vec{\mu})\models\phi[u, Y, Z, x, \vec{\beta}]\},
\]
where $Y=\langle Y(i)\mid i<\omega\rangle\in {}^{\omega}\zeta$ for some $\zeta<\delta^{\mathcal{Q}, \eta}_{\infty}$, $Z\in\Gamma^*_g$, $x\in\mathbb{R}^*_g$, and $\vec{\beta}\in{}^{<\omega}\gamma$.
Then we can take a genericity iterate $\mathcal{R}$ of $\mathcal{Q}$ above $\eta$ such that $\{\vec{\beta}, \gamma\}\cup\mathrm{ran}(Y)\subset\mathrm{ran}(\pi^{\mathcal{R}, \eta}_{\mathcal{V}_{\mathcal{R}}, \infty})$.
The proof of \cref{stabilizing_parameters} shows the following claim.

\begin{claim}\label{stabilizing_parameters_1}
Whenever $\mathcal{S}$ is a genericity iterate of $\mathcal{R}$ above $\eta$, $\pi_{\mathcal{V}_{\mathcal{R}}, \mathcal{V}_{\mathcal{S}}}((\vec{\beta}, \gamma))=(\vec{\beta}, \gamma)$ and $\pi_{\mathcal{V}_{\mathcal{R}}, \mathcal{V}_{\mathcal{S}}}(Y(i))=Y(i)$ for any $i<\omega$.
\end{claim}

Let $k\subset\mathrm{Col}(\omega, {<}\delta)$ be a maximal $\mathcal{R}$-generic.
Since $Y=\langle Y(i)\mid i<\omega\rangle\in {}^{\omega}\xi$ for some $\xi<\delta^{\mathcal{Q}, \eta}_{\infty}$, we can take $\xi_Y<\delta$ such that $\mathrm{ran}(Y)\subset\pi^{\mathcal{R}, \eta}_{\mathcal{R}, \infty}[\xi_Y]$.
Let $y\in\mathbb{R}^*_{k}$ code a function $f_y\colon\omega\to\xi_Y$ such that for any $i\in\omega$,
\[
Y(i)=\pi^{\mathcal{R}, \eta}_{\mathcal{R}, \infty}(f_y(i)).
\]
Also, we may assume that $Z=\mathrm{Code}(\Sigma^g_{\mathcal{P}\vert\xi_Z})$ for some $\xi_Z<\delta$.
Let $z\in\mathbb{R}^*_k$ be a real coding $\pi_{\mathcal{P}, \mathcal{R}}\uphar(\mathcal{P}\vert\xi_Z)\colon\mathcal{P}\vert\xi_Z\to\mathcal{R}\vert\pi_{\mathcal{P}, \mathcal{R}}(\xi_Z)$.
Then fix any $\eta'\in[\max\{\eta, \xi_Y, \pi_{\mathcal{P}, \mathcal{R}}(\xi_Z)\}, \delta)$ such that $x, y, z\in\mathcal{R}[k\uphar\eta']$.

Because $\mathsf{CDM}^+(\mathcal{Q}, \eta)=\mathsf{CDM}^+(\mathcal{R}, \eta)^{\mathcal{V}_{\mathcal{R}}[k]}$ by \cref{pres_of_local_CDM_plus}, we have
\[
A=\{u\in\mathbb{R}^*_g\mid\mathcal{V}_{\mathcal{R}}[x, y, z][u]\models\phi^*[u, x, y, z, \eta, \delta, \vec{\beta}, \gamma]\},
\]
where the formula $\phi^*$ is obtained as in the proof of \cref{club_dichotomy}.

Let $\delta'<\delta$ be the least Woodin cardinal of $\mathcal{R}$ above $\eta'$.
For any non-dropping iterate $\mathcal{R}^*$ of $\mathcal{R}$ above $\eta$, there is a unique standard\footnote{A $\mathbb{P}$-term $\tau$ over $M$ for a set of reals is called \emph{standard} if
\[
\tau=\{\langle p, \sigma\rangle\mid \sigma\subset\mathbb{P}\times\{\check{n}\mid n\in\omega\}\text{ and }p\Vdash^{M}_{\mathbb{P}}\sigma\in\tau\}.
\]
For any standard $\mathbb{P}$-terms $\tau_0$ and $\tau_1$ over $M$, if $\tau_0^{g}=\tau_1^{g}$ for any $M$-generic $g\subset\mathbb{P}$, then $\tau_0=\tau_1$.} $\mathrm{Col}(\omega, \pi_{\mathcal{R}, \mathcal{R}^*}(\delta'))$-term $\tau_{\mathcal{R}^*}$ over $\mathcal{R}^*[x, y, z]$ such that whenever $k^*\subset\mathrm{Col}(\omega, \pi_{\mathcal{R}, \mathcal{R}^*}(\delta'))$ is $\mathcal{R}^*$-generic,
\[
(\tau_{\mathcal{R}^*})^{k^*}=\{u\in\mathbb{R}^{\mathcal{R}^*[x, y, z][k^*]}\mid\mathcal{V}_{\mathcal{R}^*}[x, y, z][k^*]\models\phi^*[u, x, y, z, \eta, \delta, \vec{\beta}, \gamma]\}.
\]
Now we argue that this characterizes the set $A\subset\mathbb{R}^*_g$ we started with.

\begin{claim}[{\cite[Lemma 4.4]{CoveringChang}}]\label{projective_def_of_A}
For any $u\in\mathbb{R}^*_g$, $u\in A$ if and only if whenever $\mathcal{R}^*\in\mathcal{F}^*_g(\mathcal{R}, \eta')$ as witnessed by an iteration tree based on $\mathcal{R}\vert\delta'$ and $k^*\subset\mathrm{Col}(\omega, \pi_{\mathcal{R}, \mathcal{R}^*}(\delta'))$ is $\mathcal{R}^*[x, y, z]$-generic with $u\in\mathcal{R}^*[x, y, z][k^*]$, then $u\in (\tau_{\mathcal{R}^*})^{k^*}$.
\end{claim}
\begin{proof}
To show the forward direction, suppose that $u\in A$ and let $\mathcal{R}^*$ and $k^*$ be as in the claim.
Then take an iterate $\mathcal{S}$ of $\mathcal{R}^*$ above $\pi_{\mathcal{R}, \mathcal{R}^*}(\delta')$ that is a genericity iterate of $\mathcal{R}$.
Then as in the proof of \cref{club_dichotomy}, it follows from \cref{pres_of_local_CDM_plus} and \cref{stabilizing_parameters_1} that $u\in (\tau_{\mathcal{S}})^{k^*}$.
Let $\pi_{\mathcal{R}^*, \mathcal{S}}^{+}\colon\mathcal{R}^*[x, y, z]\to\mathcal{S}[x, y, z]$ be the canonical liftup of $\pi_{\mathcal{R}^*, \mathcal{S}}$.
Since $\crit(\pi_{\mathcal{R}^*, \mathcal{S}}^{+})$ $>\pi_{\mathcal{R}, \mathcal{R}^*}(\delta')$, $\tau_{\mathcal{R}^*}=\pi_{\mathcal{R}^*, \mathcal{S}}^{+}(\tau_{\mathcal{R}^*})$.
On the other hand, $\pi_{\mathcal{R}^*, \mathcal{S}}^{+}(\tau_{\mathcal{R}^*})=\tau_{\mathcal{S}}$, because $\pi_{\mathcal{R}^*, \mathcal{S}}^+$ does not move any parameters in the definition of $\tau_{\mathcal{R}^*}$ by \cref{stabilizing_parameters_1}.
It follows that $\tau_{\mathcal{R}^*} = \tau_{\mathcal{S}}$, which implies $u\in(\tau_{\mathcal{R}^*})^{k^*}$.

The proof of the reverse direction is very similar.
Let $\mathcal{R}^*$ and $k^*$ be as in the claim and suppose that $u\in(\tau_{\mathcal{R}^*})^{k^*}$.
Take a genericity iterate $\mathcal{S}$ of $\mathcal{R}$ as before.
Then we have $\tau_{\mathcal{R}^*}=\tau_{\mathcal{S}}$ by the same argument as before and thus $u\in(\tau_{\mathcal{S}})^{k^*}$.
Unravelling the definition of $\tau_{\mathcal{S}}$, we get $u\in A$.
\end{proof}

By \cref{projective_def_of_A}, $A$ is projective in $\mathrm{Code}(\Sigma^g_{\mathcal{R}\vert\delta'})$.
Since $\Sigma^g_{\mathcal{R}\vert\delta'}$ is a tail strategy of $\Sigma^g_{\mathcal{P}\vert\xi}$ for some $\xi<\delta$, $\mathrm{Code}(\Sigma^g_{\mathcal{R}\vert\delta'})$ is projective in $\mathrm{Code}(\Sigma^g_{\mathcal{P}\vert\xi})\in\Gamma^*_g$.
Therefore, $\mathrm{Code}(\Sigma^g_{\mathcal{R}\vert\delta'})\in\Gamma^*_g$, which in turn implies that $A\in\Gamma^*_g$.
This completes the proof of \cref{sets_of_reals_in_CDM_plus}.
\end{proof}

The following corollary is an immediate consequence of \cref{determinacy_in_DM} and \cref{sets_of_reals_in_CDM_plus}.

\begin{cor}\label{determinacy_in_CDM_plus_1}
Let $\mathcal{Q}$ be a genericity iterate of $\mathcal{P}$ and 
let $\eta<\delta$ be such that $(\mathcal{Q}, \eta)$ stabilizes $\delta_{\infty}$.
Then
\[\mathsf{CDM}^{+}(\mathcal{Q}, \eta)\models\mathsf{AD}^{+}+\mathsf{AD}_{\mathbb{R}}.
\]
\end{cor}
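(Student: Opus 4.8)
The plan is a routine absoluteness transfer: $\mathsf{AD}^{+}$ and $\mathsf{AD}_{\mathbb{R}}$ are statements about the pair $(\mathbb{R},\power(\mathbb{R}))$, and $\mathsf{CDM}^{+}(\mathcal{Q},\eta)$ has the same reals and the same sets of reals as $\mathsf{DM}$, while in addition containing $\mathsf{DM}$ as an inner model.

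First I would record two basic inclusions. Since $\mathbb{R}^*_g$ and $\Gamma^*_g$ occur among the generators in the definition of $\mathsf{CDM}^{+}(\mathcal{Q},\eta)$, we get $\mathsf{DM}=L(\Gamma^*_g,\mathbb{R}^*_g)\subseteq\mathsf{CDM}(\mathcal{Q},\eta)\subseteq\mathsf{CDM}^{+}(\mathcal{Q},\eta)$; in particular $\mathsf{DM}$ is an inner model of $\mathsf{CDM}^{+}(\mathcal{Q},\eta)$, so every set of ordinals and every strategy for a game on ordinals that lies in $\mathsf{DM}$ also lies in $\mathsf{CDM}^{+}(\mathcal{Q},\eta)$. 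Moreover the reals of $\mathsf{CDM}^{+}(\mathcal{Q},\eta)$ are exactly $\mathbb{R}^*_g=\mathbb{R}^{\mathsf{DM}}$, as is built into the construction in \cite{CoveringChang}, so by \cref{sets_of_reals_in_CDM_plus} we have $\power(\mathbb{R})^{\mathsf{CDM}^{+}(\mathcal{Q},\eta)}=\Gamma^*_g=\power(\mathbb{R})^{\mathsf{DM}}$, and hence also $\Theta^{\mathsf{CDM}^{+}(\mathcal{Q},\eta)}=\Theta^{\mathsf{DM}}$, since $\Theta$ depends only on $(\mathbb{R},\power(\mathbb{R}))$.

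Then I would check the clauses against \cref{determinacy_in_DM}. For $\mathsf{AD}$: a set of reals $A$ of $\mathsf{CDM}^{+}(\mathcal{Q},\eta)$ is a set of reals of $\mathsf{DM}$, hence has a winning strategy $\sigma$, which is coded by a real and so lies in $\mathbb{R}^*_g\subseteq\mathsf{CDM}^{+}(\mathcal{Q},\eta)$; being a winning strategy for $A$ is absolute. For $\mathsf{AD}_{\mathbb{R}}$: a payoff set $A\subseteq{}^{\omega}\mathbb{R}$ is coded by a set of reals in $\Gamma^*_g$, hence lies in $\mathsf{DM}$; a winning strategy $\tau\colon{}^{<\omega}\mathbb{R}\to\mathbb{R}$ exists in $\mathsf{DM}$, lies in $\mathsf{DM}\subseteq\mathsf{CDM}^{+}(\mathcal{Q},\eta)$, and being a winning strategy is absolute. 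For $\mathsf{AD}^{+}$ one checks its three components: $\mathsf{DC}_{\mathbb{R}}$ holds because a relation on $\mathbb{R}$ is a set of reals, hence lies in $\mathsf{DM}$, and a witnessing $\omega$-sequence of reals is a single real, hence lies in $\mathbb{R}^*_g\subseteq\mathsf{CDM}^{+}(\mathcal{Q},\eta)$; $\infty$-Borelness of all sets of reals holds because each $A\in\Gamma^*_g$ has an $\infty$-Borel code $S$ in $\mathsf{DM}$, $S$ is a set of ordinals hence lies in $\mathsf{CDM}^{+}(\mathcal{Q},\eta)$, and being an $\infty$-Borel code for $A$ is upward absolute; and ordinal determinacy holds because, as $\Theta^{\mathsf{CDM}^{+}(\mathcal{Q},\eta)}=\Theta^{\mathsf{DM}}$, for each relevant $\lambda$ the corresponding game on $\lambda$ together with any winning strategy $\sigma\colon{}^{<\omega}\lambda\to\lambda$ for it lies in $\mathsf{DM}\subseteq\mathsf{CDM}^{+}(\mathcal{Q},\eta)$, and being a winning strategy is absolute.

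I do not expect a serious obstacle, since everything reduces to elementary absoluteness once the witnesses are located. The one point deserving care is that the witnesses for the $\infty$-Borelness and ordinal-determinacy clauses of $\mathsf{AD}^{+}$ are sets of ordinals and functions on ordinals rather than reals, so one genuinely uses the inclusion $\mathsf{DM}\subseteq\mathsf{CDM}^{+}(\mathcal{Q},\eta)$ of inner models and not merely the agreement of the two models on $\power(\mathbb{R})$; correspondingly, one should be explicit that $\mathsf{CDM}^{+}(\mathcal{Q},\eta)$ adds no reals over $\mathsf{DM}$, which is exactly what turns \cref{sets_of_reals_in_CDM_plus} into the statement that $\power(\mathbb{R})^{\mathsf{CDM}^{+}(\mathcal{Q},\eta)}=\Gamma^*_g$.
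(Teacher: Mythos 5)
Your overall route is exactly the paper's: the authors derive this corollary in one line from \cref{determinacy_in_DM} and \cref{sets_of_reals_in_CDM_plus}, i.e., from the fact that $\mathsf{CDM}^{+}(\mathcal{Q},\eta)$ has the same reals and the same sets of reals as $\mathsf{DM}=L(\Gamma^*_g,\mathbb{R}^*_g)$ and contains it as an inner model. Your treatment of $\mathsf{AD}$, $\mathsf{AD}_{\mathbb{R}}$, $\mathsf{DC}_{\mathbb{R}}$, and $\infty$-Borelness is fine (for $\mathsf{AD}_{\mathbb{R}}$, note additionally that $\omega$-sequences of reals are coded by single reals, so the two models also agree on the plays of real games).

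There is, however, one step that fails as stated: for the ordinal-determinacy clause of $\mathsf{AD}^{+}$ you assert that ``being a winning strategy is absolute'' for a game on some $\lambda<\Theta$. This is precisely the clause where absoluteness is not automatic, because $\mathsf{CDM}^{+}(\mathcal{Q},\eta)$ contains $\bigcup_{\alpha<\delta^{\mathcal{Q},\eta}_{\infty}}{}^{\omega}\alpha$ with $\Theta\le\delta^{\mathcal{Q},\eta}_{\infty}$, so it has many $\omega$-sequences from $\lambda$ that are not in $\mathsf{DM}$ --- adding such sequences is the whole point of the Chang-type construction. A strategy $\sigma\in\mathsf{DM}$ therefore faces genuinely new plays in $\mathsf{CDM}^{+}(\mathcal{Q},\eta)$, and ``$\sigma$ is winning'' is not upward absolute on its face. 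The gap is repairable by the standard theorem (due to Woodin; see \cite{ADplusbook}) that $\mathsf{AD}^{+}$ transfers between inner models of $\mathsf{ZF}+\mathsf{DC}_{\mathbb{R}}$ with the same reals and the same sets of reals, which is surely what the paper's ``immediate consequence'' is implicitly invoking; but your clause-by-clause argument needs that theorem (or an argument via $\infty$-Borel codes or Suslin representations that handles the new plays) rather than bare absoluteness of winning.
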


\subsection{The regularity of $\Theta$}

Using the proof of the main theorem of \cite{dm_of_self_it}, we show the following result.

\begin{thm}\label{determinacy_in_CDM_plus_2}
Suppose that $\delta$ is a regular limit of Woodin cardinals in $\mathcal{V}$.
Let $\mathcal{Q}$ be a genericity iterate of $\mathcal{P}$ and 
let $\eta<\delta$ be such that $(\mathcal{Q}, \eta)$ stabilizes $\delta_{\infty}$.
Then
\[\mathsf{CDM}^{+}(\mathcal{Q}, \eta)\models\mathsf{DC}+\Theta\text{ is regular.}
\]
\end{thm}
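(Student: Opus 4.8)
The plan is to establish $\mathsf{DC}$ and the regularity of $\Theta$ in $M:=\mathsf{CDM}^{+}(\mathcal{Q},\eta)$ separately, in each case reducing to the corresponding facts about the derived model $\mathsf{DM}$ recorded in \cref{determinacy_in_DM}. Two inputs are used throughout: $M\models\mathsf{AD}^{+}$ by \cref{determinacy_in_CDM_plus_1}, so the coding lemma and $\mathsf{DC}_{\mathbb{R}}$ are available in $M$; and $M$ and $\mathsf{DM}$ have the same sets of reals, namely $\Gamma^{*}_{g}$, by \cref{sets_of_reals_in_CDM_plus}, so in particular $\Theta^{M}=\Theta^{\mathsf{DM}}$ and the two models have the same Solovay sequence.

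For the regularity of $\Theta$: assume toward a contradiction that $f\colon\kappa\to\Theta^{M}$ is cofinal in $M$ with $\kappa<\Theta^{M}$. Since $\kappa<\Theta^{M}$, fix in $M$ a prewellordering $\preceq$ of $\mathbb{R}^{*}_{g}$ of length $\kappa$, and apply the coding lemma inside $M$ to the relation that pairs a real $y$ with the reals coding, in some fixed universal fashion, a prewellordering of $\mathbb{R}^{*}_{g}$ of rank $f(|y|_{\preceq})$. This yields a single set $W\in M$, hence $W\in\Gamma^{*}_{g}$, such that the prewellorderings coded by members of $W$ have ranks cofinal in $\Theta^{M}$. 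As $W\in\mathsf{DM}$, the function sending each $x\in W$ that codes a prewellordering to its rank lies in $\mathsf{DM}$ and has range a cofinal subset of $\Theta^{\mathsf{DM}}$ whose order type is again a surjective image of $\mathbb{R}^{*}_{g}$, hence below $\Theta^{\mathsf{DM}}$. So $\Theta^{\mathsf{DM}}$ is singular in $\mathsf{DM}$, contradicting \cref{determinacy_in_DM} since $\delta$ is regular in $\mathcal{V}$.

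For $\mathsf{DC}$: the regularity of $\delta$ makes $\mathrm{Col}(\omega,{<}\delta)$ a $\delta$-cc forcing, and one first extracts from this the relevant closure properties of the generators of $M$. Every $\omega$-sequence of reals in $\mathcal{V}[g]$ appears by a stage below $\delta$, so $\mathbb{R}^{*}_{g}$ is closed under $\omega$-sequences modulo coding; likewise every $\omega$-sequence of members of $\Gamma^{*}_{g}$ that lies in $M$ is, using also that $\Theta^{\mathsf{DM}}$ is regular (just proved), Wadge-bounded and therefore coded by a single set of reals, which lies in $M$ and hence in $\Gamma^{*}_{g}$. Finally $\delta^{\mathcal{Q},\eta}_{\infty}$ has uncountable cofinality in $\mathcal{V}[g]$ --- any $\omega$-sequence of ordinals below it lies in the range of a single $\pi^{\mathcal{Q},\eta}_{\mathcal{R}^{*},\infty}$ by countable directedness of $\mathcal{F}^{*}_{g}(\mathcal{Q},\eta)$, and that range is bounded in $\delta^{\mathcal{Q},\eta}_{\infty}$ --- so $\bigcup_{\alpha<\delta^{\mathcal{Q},\eta}_{\infty}}{}^{\omega}\alpha$ is literally closed under $\omega$-sequences. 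Now let $R\in M$ be a relation on $X\in M$ with no $R$-maximal element. Inspecting the $L$-hierarchy of $M$, $X$ is, in $M$, a surjective image of a set of the form $\lambda\times\Gamma^{*}_{g}\times\bigcup_{\alpha<\delta^{\mathcal{Q},\eta}_{\infty}}{}^{\omega}\alpha$ for an ordinal $\lambda$ (with $\mathcal{M}_{\infty}(\mathcal{Q},\eta)$, wellorderable in $M$, absorbed into $\lambda$ and $\mathbb{R}^{*}_{g}$ absorbed into $\Gamma^{*}_{g}$), so after pulling $R$ back it suffices to find an $R'$-increasing $\omega$-sequence in $M$ for a relation $R'$ on such a product. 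Compute $M$ inside $\mathcal{V}_{\mathcal{Q}}[h]$ via \cref{pres_of_local_CDM_plus}, pick an $R'$-increasing sequence there, and correct it to lie in $M$ coordinatewise: the $\lambda$-coordinate by passing to the $\mathsf{ZF}$-definable least-successor sequence; the $\bigcup_{\alpha}{}^{\omega}\alpha$-coordinate by flattening, which lands in $\bigcup_{\alpha}{}^{\omega}\alpha\subseteq M$ by the uncountable cofinality of $\delta^{\mathcal{Q},\eta}_{\infty}$; and the $\Gamma^{*}_{g}$-coordinate via $\mathsf{DC}_{\mathbb{R}}$ in $M$ together with the above closure of $\Gamma^{*}_{g}$ and the ``catching'' technique from the proof of \cref{club_dichotomy} --- freeze the parameters defining $R'$ along a genericity iterate, replace the sets of reals occurring as successors by reals naming them over the frozen iterate, apply $\mathsf{DC}_{\mathbb{R}}$, and decode the resulting sequence of reals back into $M$.

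I expect the main obstacle to be exactly the $\Gamma^{*}_{g}$-coordinate of the $\mathsf{DC}$ argument: one must arrange the reduction to $\mathsf{DC}_{\mathbb{R}}$ uniformly enough that the produced $\omega$-sequence of sets of reals genuinely lies in $M$ and not merely in $\mathcal{V}_{\mathcal{Q}}[h]$, which is the place where the argument of \cite{dm_of_self_it} has to be adapted to the extra generator $\bigcup_{\alpha<\delta^{\mathcal{Q},\eta}_{\infty}}{}^{\omega}\alpha$ and the predicate $\vec{\mu}$; note that the regularity of $\delta$ is used here twice, once as the $\delta$-cc of $\mathrm{Col}(\omega,{<}\delta)$ and once, through \cref{determinacy_in_DM}, as the regularity of $\Theta$ in $\mathsf{DM}$.
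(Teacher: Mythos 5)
The genuine gap is in your argument for the regularity of $\Theta$. You cannot reduce it to the regularity of $\Theta$ in $\mathsf{DM}$: a cofinal $f\colon\kappa\to\Theta$ (or $f\colon\mathbb{R}^*_g\to\Theta$) in $\mathsf{CDM}^+(\mathcal{Q},\eta)$ may be defined from the extra generators of the model --- an $\omega$-sequence of ordinals below $\delta^{\mathcal{Q},\eta}_{\infty}$ and the predicate $\vec{\mu}$ --- and there is no reason for it, or for any witness to singularity, to belong to $\mathsf{DM}$. Your proposed bridge via the coding lemma does not close this: the Coding Lemma codes subsets of products of ordinals strictly below $\Theta$ by pointsets; it does not let you code a function \emph{into} $\Theta$, because the values $f(\alpha)$ are ranks of prewellorderings of $\mathbb{R}^*_g$, i.e.\ of sets of reals, not of reals. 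Producing a single $W\in\Gamma^*_g$ whose sections are prewellorderings of ranks cofinal in $\Theta$ would require uniformly selecting, for each $\alpha<\kappa$, a prewellordering of rank $f(\alpha)$ --- a choice over $\power(\mathbb{R})$ that neither $\mathsf{AD}$ nor $\mathsf{AD}_{\mathbb{R}}$-uniformization (which uniformizes relations on $\mathbb{R}\times\mathbb{R}$, not on $\mathbb{R}\times\power(\mathbb{R})$) supplies. This is precisely why the paper does not deduce regularity from \cref{determinacy_in_DM} but runs a direct argument: it catches the parameters of $f$ in a genericity iterate $\mathcal{R}$, chooses $\eta''$ with $w(\mathrm{Code}(\Sigma^k_{\mathcal{R}\vert\eta''}))>\sup f[\mathbb{R}^{k\uphar\delta'}]$, picks $r$ with $f(r)$ above the Wadge rank of a strictly larger strategy code, makes $r$ generic over a further iterate $\mathcal{S}$ with critical point above the relevant Woodin cardinal, and uses the invariance $f=\pi^+_{\mathcal{R},\mathcal{S}}(f)$ (from \cref{pres_of_local_CDM_plus} and \cref{stabilizing_parameters}) to contradict the choice of $r$. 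You would need to supply an argument of this kind; the reduction to $\mathsf{DM}$ as stated does not go through.

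The $\mathsf{DC}$ half is broadly sound and close to the paper's route, which proves $\cf(\Theta)>\omega$ directly (each member of a putative Wadge-cofinal $\omega$-sequence appears in some $\mathcal{V}[g\uphar\lambda_n]$ with $\lambda_n<\delta$; regularity of $\delta$ bounds the $\lambda_n$ and makes all of $\Gamma^*_g$ projective in one strategy code, a contradiction), then quotes Solovay's theorem that $\mathsf{AD}+\mathsf{DC}_{\mathbb{R}}+\cf(\Theta)>\omega$ yields $\mathsf{DC}_{\power(\mathbb{R})}$, and finally reduces $\mathsf{DC}$ to $\mathsf{DC}_X$ for $X=\bigcup_{\xi<\delta^{\mathcal{Q},\eta}_{\infty}}{}^{\omega}\xi$ using $\cf(\delta^{\mathcal{Q},\eta}_{\infty})=\cf(\delta)>\omega$, which is exactly your closure observation for that generator. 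Quoting Solovay spares you the coordinatewise correction you describe for the $\Gamma^*_g$-coordinate, which you rightly flag as the delicate step. One caution there: to Wadge-bound an $\omega$-sequence of sets of reals lying in $\mathsf{CDM}^+(\mathcal{Q},\eta)$ you need $\cf(\Theta)>\omega$ as computed in that model (equivalently in $\mathcal{V}[g]$), not merely that $\Theta$ is regular in $\mathsf{DM}$, since a cofinal $\omega$-sequence in the larger model need not lie in $\mathsf{DM}$; so this, too, must be proved directly rather than imported from \cref{determinacy_in_DM}.
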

\begin{proof}
First, we show that $\mathsf{CDM}^+(\mathcal{Q}, \eta)\models\cf(\Theta)>\omega$.
This follows from the proof of \cite[Corollary 3.7]{dm_of_self_it} without any change as follows.
Suppose toward a contradiction that $\mathsf{CDM}^+(\mathcal{Q}, \eta)\models\cf(\Theta)=\omega$.
We work in $\mathcal{V}[g]$ for now.
Then by \cref{sets_of_reals_in_CDM_plus}, there is a sequence $\langle A_n \mid n<\omega\rangle$ that is Wadge cofinal in $\Gamma^*_g$.
For any $n<\omega$, let $\lambda_n<\delta$ be such that there is a $B_n\subset\mathbb{R}^{\mathcal{V}[g\uphar\lambda_n]}$ such that $B_n$ is ${<}\delta$-uB in $\mathcal{V}[g\uphar\lambda_n]$ and $A_n=(B_n)^*_{g\uphar\lambda_n}$.
Let $\lambda=\sup_{n<\omega}\lambda_n$.
Since $\delta$ is regular, $\lambda<\delta$.
Let $\delta'<\delta$ be the least Woodin cardinal above $\lambda$ in $\mathcal{V}$.
Then by \cite[Fact 3.3]{dm_of_self_it}, all $A_n$'s are projective in $\mathrm{Code}(\Sigma^g_{\mathcal{P}\vert\delta'})$.
It follows, however, that even if $\delta'<\xi<\delta$, $\mathrm{Code}(\Sigma^g_{\mathcal{P}\vert\xi})$ is projective in $\mathrm{Code}(\Sigma^g_{\mathcal{P}\vert\delta'})$, which contradicts \cite[Lemma 3.4]{dm_of_self_it}.\footnote{The anonymous referee pointed out that, in this proof, we could use the fact that $\{\mathrm{Code}(\Sigma^g_{\mathcal{P}\vert\xi})\mid\xi<\delta\}$ is Wadge cofinal in $\Gamma^*_g$, as already mentioned in the paragraph following \cref{stabilizing_parameters}.}

Now we can easily show that $\mathsf{DC}$ holds in $\mathsf{CDM}^+(\mathcal{Q}, \eta)$.
In \cite{So21}, Solovay showed that $\mathsf{AD}+\mathsf{DC}_{\mathbb{R}}+\cf(\Theta)>\omega$ implies that $\mathsf{DC}_{\power(\mathbb{R})}$.
Since $\mathsf{CDM}^+(\mathcal{Q}, \eta)\models\mathsf{AD}^+$ by \cref{determinacy_in_CDM_plus_1}, $\mathsf{CDM}^+(\mathcal{Q}, \eta)\models\mathsf{DC}_{\power(\mathbb{R})}$.
Then in $\mathsf{CDM}^+(\mathcal{Q}, \eta)$, $\mathsf{DC}$ reduces to $\mathsf{DC}_X$ where $X=\bigcup_{\xi<\delta^{\mathcal{Q}, \eta}_{\infty}}{}^{\omega}\xi$, because any element of $\mathsf{CDM}^+(\mathcal{Q}, \eta)$ is ordinal definable in parameters from $X$ and sets of reals.
Since $\cf(\delta^{\mathcal{Q}, \eta}_{\infty})=\cf(\delta)>\omega$ in $\mathcal{V}[g]$, any $\omega$-sequence from $X$ can be easily coded into an element of $X$.
By this observation, $\mathsf{DC}_X$ in $\mathcal{V}[g]$ implies $\mathsf{DC}_X$ in $\mathsf{CDM}^+(\mathcal{Q}, \eta)$.
Therefore, $\mathsf{CDM}^+(\mathcal{Q}, \eta)\models\mathsf{DC}$.

The regularity of $\Theta$ in $\mathsf{CDM}^+(\mathcal{Q}, \eta)$ follows from the proof of \cite[Theorem 1.3]{dm_of_self_it}.
Let $\Theta=\Theta^{\mathsf{CDM}^+(\mathcal{Q}, \eta)}$.
Suppose toward a contradiction that there is a cofinal map $f\colon\mathbb{R}^*_g\to\Theta$ in $\mathsf{CDM}^{+}(\mathcal{Q}, \eta)$.
Then there are a formula $\phi$ in the language for $\mathsf{CDM}^+(\mathcal{Q}, \eta)$, an ordinal $\gamma$, $Y\in {}^{\omega}\xi$ for some $\xi<\delta^{\mathcal{Q}, \eta}_{\infty}$, $Z\in\Gamma^*_g$, $x\in\mathbb{R}^*_g$ and $\vec{\beta}\in{}^{<\omega}\gamma$ such that
\[
f=\{\langle u, \zeta\rangle\in\mathbb{R}^*_g\times\Theta\mid(\mathsf{CDM}^+(\mathcal{Q}, \eta)\vert\gamma; {\in}, \vec{\mu})\models\phi[u, \zeta, Y, Z, x, \vec{\beta}]\}.
\]
We take a genericity iterate $\mathcal{R}$ of $\mathcal{Q}$ above $\eta$ such that $\{\vec{\beta}, \eta\}\cup\mathrm{ran}(Y)\subset\mathrm{ran}(\pi^{\mathcal{R}, \eta}_{\mathcal{R}, \infty})$ and a maximal $\mathcal{R}$-generic $k\subset\mathrm{Col}(\omega, {<}\delta)$.
Let $\xi_Y<\delta$ be such that $\mathrm{ran}(Y)\subset\pi^{\mathcal{R}, \eta}_{\mathcal{R}, \infty}[\xi_Y]$.
Let $y\in\mathbb{R}^*_{k}$ code a function $f_y\colon\omega\to\xi_Y$ such that for any $i\in\omega$, $Y(i)=\pi^{\mathcal{R}, \eta}_{\mathcal{R}, \infty}(f_y(i))$.
Also, we may assume that $Z=\mathrm{Code}(\Sigma^g_{\mathcal{P}\vert\xi_{Z}})$ for some $\xi_Z<\delta$.
Let $z\in\mathbb{R}^*_{k}$ code $\pi_{\mathcal{P}, \mathcal{R}}\uphar(\mathcal{P}\vert\xi_Z)$.
Let $\eta'\in[\max\{\eta, \xi_Y, \pi_{\mathcal{P}, \mathcal{R}}(\xi_Z)\}, \delta)$ be such that $x, y, z\in\mathcal{R}[k\uphar\eta']$.
Because $\mathsf{CDM}(\mathcal{Q}, \eta)=\mathsf{CDM}(\mathcal{R}, \eta)^{\mathcal{V}_{\mathcal{R}}[h]}$,
\[
f=\{\langle u, \zeta\rangle\in\mathbb{R}^*_g\times\Theta\mid\mathcal{V}_{\mathcal{R}}[x, y, z][u]\models\phi^*(u, \zeta, x, y, z, \eta, \delta, \vec{\beta}, \gamma)\},
\]
where $\phi^*$ is obtained from $\phi$ as in the proof of \cref{club_dichotomy}.

Let $\delta'<\delta$ be the least Woodin cardinal of $\mathcal{R}$ above $\eta'$ and let $\eta''\in(\delta', \delta)$ be an inaccessible cardinal of $\mathcal{R}$ such that
\[
(\mathsf{CDM}^+(\mathcal{R}, \eta); {\in}, \vec{\mu})\models w(\mathrm{Code}(\Sigma^k_{\mathcal{R}\vert\eta''}))>\sup f[\mathbb{R}^{k\uphar\delta'}],
\]
where $w(\text{--})$ denotes the Wadge rank of a set of reals.
Such an $\eta''$ exists because $\mathbb{R}^{k\uphar\delta'}$ is countable in $\mathsf{CDM}^+(\mathcal{R}, \eta)$, $\cf(\Theta)>\omega$ holds in $\mathsf{CDM}^+(\mathcal{Q}, \eta)=\mathsf{CDM}^+(\mathcal{R}, \eta)$, and $\{\mathrm{Code}(\mathcal{R}^k_{\mathcal{R}\vert\xi})\mid\xi<\delta\}$ is Wadge cofinal in $\Gamma^*_g=\Gamma^*_k$.
Since $f$ is cofinal, there is an $r\in\mathbb{R}^*_k$ such that $f(r)>w(\mathrm{Code}(\Sigma^k_{\mathcal{R}\vert\delta''}))$, where $\delta''<\delta$ is a sufficiently large Woodin cardinal of $\mathcal{R}$ above $\eta''$ such that $\mathrm{Code}(\Sigma^g_{\mathcal{R}\vert\delta''})$ is not projective in $\mathrm{Code}(\Sigma^g_{\mathcal{R}\vert\eta''})$.\footnote{Actually, one can choose $\delta''$ as the least Woodin cardinal of $\mathcal{R}$ above $\eta'$, see \cite[Lemma 3.4]{dm_of_self_it}.}

Using the extender algebra at $\delta'$, we can take an $\mathcal{R}^*\in\mathcal{F}^*_g(\mathcal{R}, \eta')$ and an $\mathcal{R}^*$-generic $k^*\subset\mathrm{Col}(\omega, {<}\delta)$ such that $k\uphar\eta'\subset k^*$ and $r\in\mathcal{R}^*[k^*\uphar\pi_{\mathcal{R},\mathcal{R}^*}(\delta')]$.
Then let $\mathcal{S}$ be a non-dropping iterate of $\mathcal{R}^*$ such that it is genericity iterate of $\mathcal{R}$ and $\crit(\pi_{\mathcal{R}^*, \mathcal{S}})>\pi_{\mathcal{R},\mathcal{R}^*}(\delta')$.
Let $l\subset\mathrm{Col}(\omega, {<}\delta)$ be a maximal $\mathcal{S}$-generic such that $k^*\uphar\pi_{\mathcal{R}, \mathcal{R}^*}(\delta')\subset l$.

Let $\pi^{+}_{\mathcal{R}, \mathcal{S}}\colon\mathcal{R}[k\uphar\eta']\to\mathcal{S}[k\uphar\eta']$ be the canonical liftup of $\pi_{\mathcal{R}, \mathcal{S}}$.
Mainly because $\mathsf{CDM}^+(\mathcal{R}, \eta)=\mathsf{CDM}^+(\mathcal{S}, \eta)$, we have $f = \pi^+_{\mathcal{R}, \mathcal{S}}(f)$ as in the proof of \cref{club_dichotomy}.
Then the elementarity of $\pi^+_{\mathcal{R}, \mathcal{S}}$ implies that
\[
(\mathsf{CDM}^+(\mathcal{S}, \eta), {\in}, \vec{\mu})\models w(\mathrm{Code}(\Sigma^l_{\mathcal{S}\vert\pi_{\mathcal{R}, \mathcal{S}}(\eta'')}))>\sup f[\mathbb{R}^{l\uphar\pi_{\mathcal{R}, \mathcal{S}}(\delta')}].
\]
Since $\mathbb{R}^{l\uphar\pi_{\mathcal{R, \mathcal{S}}}(\delta')}\supseteq\mathbb{R}^{k^*\uphar\pi_{\mathcal{R}, \mathcal{R}^*}(\delta')}\ni r$, it follows that $w(\mathrm{Code}(\Sigma^l_{\mathcal{S}\vert\pi_{\mathcal{R}, \mathcal{S}}(\eta'')}))>f(r)$.
Also, as $\Sigma^l_{\mathcal{S}\vert\pi_{\mathcal{R}, \mathcal{S}}(\eta'')}$ is a tail strategy of $\Sigma^k_{\mathcal{R}\vert\eta''}$, $\mathrm{Code}(\Sigma^l_{\mathcal{S}\vert\pi_{\mathcal{R}, \mathcal{S}}(\eta'')})$ is projective in $\mathrm{Code}(\Sigma^k_{\mathcal{R}\vert\eta''})$.
Then by the choice of $\delta''$,
\[
w(\mathrm{Code}(\Sigma^k_{\mathcal{R}\vert\delta''}))> w(\mathrm{Code}(\Sigma^l_{\mathcal{S}\vert\pi_{\mathcal{R}, \mathcal{S}}(\eta'')})).
\]
Therefore, $w(\mathrm{Code}(\Sigma^k_{\mathcal{R}\vert\delta''}))>f(r)$, which contradicts the choice of $r$.
\end{proof}

\subsection{The measurability of $\Theta$}

A filter $\mu$ is called \emph{$\mathbb{R}$-complete} if for any function $f\colon\mathbb{R}\to\mu$, $\bigcap_{x\in\mathbb{R}}f(x)\in\mu$.
In the context of $\mathsf{AD}$, we say that \emph{$\Theta$ is measurable} if there is an $\mathbb{R}$-complete normal ultrafilter on $\Theta$.
To show that $\Theta$ is measurable, we need to suppose that $\delta^{\mathcal{Q}, \eta}_{\infty}>\Theta$.
The consistency of this assumption from large cardinals in $\mathcal{V}$ will be shown in \cref{main_thm_2} below.

\begin{thm}\label{Theta_measurable}
    Let $\mathcal{Q}\in I^*_g(\mathcal{P}, \Sigma)$ and $\eta<\delta$ be such that $\mathcal{Q}$ is a genericity iteration of $\mathcal{P}$ and $(\mathcal{Q}, \eta)$ stabilizes $\delta_{\infty}$.
    Also, suppose that $\delta^{\mathcal{Q}, \eta}_{\infty}>\Theta$.
    Then the restriction of the club filter (in $\mathcal{V}[g]$) on $\Theta\cap\mathrm{Cof}(\omega):=\{\alpha<\Theta\mid\cf(\alpha)=\omega\}$ to $\mathsf{CDM}^+(\mathcal{Q}, \eta)$ is an $\mathbb{R}$-complete normal ultrafilter in $\mathsf{CDM}^+(\mathcal{Q}, \eta)$.
    Therefore,
    \[
    \mathsf{CDM}^+(\mathcal{Q}, \eta)\models\Theta\text{ is measurable.}
    \]
\end{thm}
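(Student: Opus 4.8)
The plan is to establish the three properties of the filter $\mathcal{F}:=\{A\in\mathsf{CDM}^+(\mathcal{Q},\eta)\mid A\supseteq C\cap\Theta\cap\mathrm{Cof}(\omega)\text{ for some club }C\text{ of }\mathcal{V}[g]\}$ on $\Theta\cap\mathrm{Cof}(\omega)$ — that it is an ultrafilter, that it is normal, and that it is $\mathbb{R}$-complete — by the techniques already developed in this section, the one new ingredient being a canonical representation of ordinals of $\Theta\cap\mathrm{Cof}(\omega)$. Write $\Theta=\Theta^{\mathsf{CDM}^+(\mathcal{Q},\eta)}$. For a genericity iterate $\mathcal{R}$ of $\mathcal{Q}$ above $\eta$ with $\mathcal{R}\in\mathcal{V}[g]$, a maximal $\mathcal{R}$-generic $k$, and a limit of Woodin cardinals $\lambda<\delta$ of $\mathcal{R}$, put $\theta^{\mathcal{R},k}_\lambda=\sup\{w(\mathrm{Code}(\Sigma^k_{\mathcal{R}\vert\delta'}))\mid\delta'<\lambda\text{ is a Woodin cardinal of }\mathcal{R}\}$, where $w$ denotes Wadge rank. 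Since $\{\mathrm{Code}(\Sigma^k_{\mathcal{R}\vert\xi})\mid\xi<\delta\}$ is Wadge cofinal in $\Gamma^*_g=\mathsf{CDM}^+(\mathcal{Q},\eta)\cap\power(\mathbb{R}^*_g)$ by \cref{sets_of_reals_in_CDM_plus} and these Wadge ranks are strictly increasing in $\xi$ (as in \cite{dm_of_self_it}), each $\theta^{\mathcal{R},k}_\lambda$ lies in $\Theta\cap\mathrm{Cof}(\omega)$, and $C_{\mathcal{R},k}:=\{\theta^{\mathcal{R},k}_\lambda\mid\eta<\lambda<\delta\text{ a limit of Woodins of }\mathcal{R}\}$ is a club subset of $\Theta\cap\mathrm{Cof}(\omega)$ in $\mathcal{V}[g]$, closure holding because a supremum of an increasing $\omega$-sequence of limits of Woodin cardinals of $\mathcal{R}$ below $\delta$ is another such. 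We may moreover arrange, by intersecting with a club, that every point of $C_{\mathcal{R},k}$ is a closure point of $\mathrm{ran}(\pi^{\mathcal{R},\eta}_{\mathcal{V}_\mathcal{R},\infty})$, which is what will let a single genericity iterate ``catch'' all ordinals below such a point.

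The heart of the proof is the dichotomy: \emph{for every $A\subset\Theta\cap\mathrm{Cof}(\omega)$ with $A\in\mathsf{CDM}^+(\mathcal{Q},\eta)$, either $A$ or $(\Theta\cap\mathrm{Cof}(\omega))\setminus A$ contains a club of $\mathcal{V}[g]$.} The argument follows the pattern of the proofs of \cref{club_dichotomy} and \cref{determinacy_in_CDM_plus_2}. Write $A=\{\zeta\mid(\mathsf{CDM}^+(\mathcal{Q},\eta)\vert\overline\gamma;{\in},\vec\mu)\models\phi[\zeta,Y,Z,x,\vec\beta]\}$ with $Y\in{}^{\omega}\zeta_0$ for some $\zeta_0<\delta^{\mathcal{Q},\eta}_\infty$, $Z=\mathrm{Code}(\Sigma^g_{\mathcal{P}\vert\xi_Z})$, $x\in\mathbb{R}^*_g$, $\vec\beta\in{}^{<\omega}\overline\gamma$. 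Take a genericity iterate $\mathcal{R}$ of $\mathcal{Q}$ above $\eta$ with $\{\vec\beta,\overline\gamma\}\cup\mathrm{ran}(Y)\subset\mathrm{ran}(\pi^{\mathcal{R},\eta}_{\mathcal{V}_\mathcal{R},\infty})$, a maximal $\mathcal{R}$-generic $k$, reals $y,z\in\mathbb{R}^*_k$ coding $Y$ and $\pi_{\mathcal{P},\mathcal{R}}\uphar(\mathcal{P}\vert\xi_Z)$, and $\eta'<\delta$ with $x,y,z\in\mathcal{R}[k\uphar\eta']$, exactly as before; by \cref{pres_of_local_CDM_plus}, $\zeta\in A$ iff $\mathcal{V}_\mathcal{R}[x,y,z][\text{small generic coding }\zeta]\models\phi^*[\dots]$ for the formula $\phi^*$ produced as in the proof of \cref{club_dichotomy}. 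One then shows that the truth value of ``$\theta^{\mathcal{R},k}_\lambda\in A$'' is independent of the choice of limit of Woodin cardinals $\lambda>\eta'$ of $\mathcal{R}$. Given two such $\lambda<\lambda'$, one uses \cref{easy_lemma} to find a common genericity iterate $\mathcal{S}$ of $\mathcal{R}$ with the needed agreements — $\mathsf{CDM}^+(\mathcal{R},\eta)=\mathsf{CDM}^+(\mathcal{S},\eta)$, $\mathcal{M}_\infty(\mathcal{R},\eta)=\mathcal{M}_\infty(\mathcal{S},\eta)$ (\cref{pres_of_CDM_plus,pres_of_M_infty}), and fixing of the parameters $\vec\beta,\overline\gamma,Y(i),Z,x$ (\cref{stabilizing_parameters} together with the choices of $y,z$ and $\eta'$) — arranged so that the Woodin cardinals of $\mathcal{R}$ below $\lambda$ together with their strategy codes are carried onto those computing $\theta^{\mathcal{S},l}_{\lambda'}$; both ``$\theta^{\mathcal{R},k}_\lambda\in A$'' and ``$\theta^{\mathcal{R},k}_{\lambda'}\in A$'' then reduce to the same statement about $\mathcal{V}_\mathcal{S}$. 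Consequently $C_{\mathcal{R},k}$ above its $\eta'$-th point is either $\subset A$ or disjoint from $A$, proving the dichotomy.

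Granting the dichotomy, $\mathcal{F}$ is an ultrafilter, and since it contains the restriction of the $\mathcal{V}[g]$-club filter it is $\omega_1$-complete; to upgrade to normality and $\mathbb{R}$-completeness one argues as follows. Given a sequence $\langle A_i\mid i\in I\rangle\in\mathsf{CDM}^+(\mathcal{Q},\eta)$ of members of $\mathcal{F}$ (with $I=\mathbb{R}^*_g$, or $I=\gamma<\Theta$, as the case requires), fix one genericity iterate $\mathcal{R}$ of $\mathcal{Q}$ above $\eta$ catching the finitely many parameters of the whole sequence and a maximal $\mathcal{R}$-generic $k$; because each point of $C_{\mathcal{R},k}$ is a closure point of $\mathrm{ran}(\pi^{\mathcal{R},\eta}_{\mathcal{V}_\mathcal{R},\infty})$ (so that $\mathcal{R}$ also catches every index relevant below that point), the dichotomy applies to each $A_i$ with the \emph{same} witnessing club $C_{\mathcal{R},k}$; as $A_i\in\mathcal{F}$ the dichotomy must fall on the ``$\subset A_i$'' side, so a tail of $C_{\mathcal{R},k}$ lies in every $A_i$, whence $\bigcap_iA_i$ (resp.\ the diagonal intersection $\triangle_iA_i$) contains a club and lies in $\mathcal{F}$. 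The regressive-function form of normality follows from $\Theta$-completeness by pressing down inside $\mathcal{V}[g]$, and $\mathbb{R}$-completeness follows from the same scheme together with $\mathsf{AD}_{\mathbb{R}}$ in $\mathsf{CDM}^+(\mathcal{Q},\eta)$ (\cref{determinacy_in_CDM_plus_1}), used to select the catching data uniformly over $u\in\mathbb{R}^*_g$. Once $\mathcal{F}$ is a normal $\mathbb{R}$-complete ultrafilter, $\mathsf{CDM}^+(\mathcal{Q},\eta)\models\Theta\text{ is measurable}$ is immediate.

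I expect the main obstacle to be the tail-invariance step in the dichotomy — producing the further genericity iterate $\mathcal{S}$ that simultaneously fixes the strategy codes and the parameters defining $A$ while transporting the Woodin cardinals of $\mathcal{R}$ below $\lambda$ onto those below $\lambda'$ — which is where all the preservation lemmas of this section (\cref{easy_lemma,pres_of_M_infty,pres_of_CDM_plus,pres_of_local_CDM_plus,stabilizing_parameters}) must be combined, much as in the most delicate part of the proof of \cref{club_dichotomy}. The second point requiring genuine care is $\mathbb{R}$-completeness (as opposed to plain $\Theta$-completeness): the stages at which the reals of $\mathbb{R}^*_g$ become generic over $\mathcal{R}$ are cofinal in $\delta$ and so cannot be bounded uniformly, so one really does need to exploit determinacy of real games in $\mathsf{CDM}^+(\mathcal{Q},\eta)$ rather than a naive intersection argument.
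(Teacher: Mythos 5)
Your overall architecture (exhibit a canonical club in $\Theta$, prove that every $A$ in the model or its complement absorbs a tail of it, then derive normality and $\mathbb{R}$-completeness) is the same as the paper's, but the club you choose makes the central dichotomy step unprovable by the method you describe, and this is precisely the step you flag as ``the main obstacle'' without closing it. Your club is indexed by ordinals $\lambda<\delta$ of a \emph{single fixed} iterate $\mathcal{R}$, with $\theta^{\mathcal{R},k}_{\lambda}$ a supremum of Wadge ranks of strategy codes, and your tail-invariance step requires an iteration of $\mathcal{R}$ carrying the data computing $\theta^{\mathcal{R},k}_{\lambda}$ onto the data computing $\theta^{\mathcal{R},k}_{\lambda'}$. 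No such iteration exists: to send $\lambda$ to $\lambda'$ the tree must use extenders with critical points below $\lambda'$, which changes $\mathcal{R}\vert\lambda'$, hence the strategies $\Sigma_{\mathcal{R}\vert\delta'}$ for $\delta'<\lambda'$, hence the ordinal $\theta_{\lambda'}$ itself; whereas any iteration with critical point above $\lambda'$ fixes both ordinals but cannot relate them. Elementarity of $\pi^+_{\mathcal{V}_{\mathcal{R}},\mathcal{V}_{\mathcal{S}}}$ yields at best ``$\theta^{\mathcal{R},k}_{\lambda}\in A\iff\theta^{\mathcal{S},l}_{\pi_{\mathcal{R},\mathcal{S}}(\lambda)}\in A$'', and $\theta^{\mathcal{S},l}_{\lambda'}\neq\theta^{\mathcal{R},k}_{\lambda'}$ in general (indeed, if the sups \emph{were} iteration-invariant, the equivalence would be a tautology and would give no comparison between distinct $\lambda$'s). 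So the $\lambda$-independence of ``$\theta^{\mathcal{R},k}_{\lambda}\in A$'' is not established, and with it the ultrafilter property collapses.

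The paper's fix is to index the club by \emph{iterates} rather than by ordinals: for $\mathcal{R}^*\in\mathcal{F}^*_k(\mathcal{R},\xi)$ with $\Theta\in\mathrm{ran}(\pi^{\mathcal{R},\xi}_{\mathcal{R}^*,\infty})$ one sets $\sigma'_{\mathcal{R}^*,\xi}=\sup(\mathrm{ran}(\pi^{\mathcal{R},\xi}_{\mathcal{R}^*,\infty})\cap\Theta)$, running \cref{finding_clubs,club_dichotomy} verbatim with $\mathrm{ran}(\pi)\cap\alpha$ replaced by $\sup(\mathrm{ran}(\pi)\cap\Theta)$. Since $\sigma'_{\mathcal{S},\xi}$ is uniformly definable over $\mathcal{V}_{\mathcal{S}}[x,y,z]$ as a point attached to $\mathcal{S}$'s \emph{own} direct limit map, elementarity together with \cref{pres_of_local_CDM_plus,stabilizing_parameters} transfers ``$\sigma'_{\mathcal{R},\xi}\in A$'' to ``$\sigma'_{\mathcal{S},\xi}\in A$'', and taking $\crit(\pi_{\mathcal{R}^*,\mathcal{S}})$ above the $\pi^{\mathcal{R},\xi}_{\mathcal{R}^*,\infty}$-preimage of $\Theta$ gives $\sigma'_{\mathcal{S},\xi}=\sigma'_{\mathcal{R}^*,\xi}$. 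This is exactly where the hypothesis $\delta^{\mathcal{Q},\eta}_{\infty}>\Theta$ is used (it puts $\Theta$ in the range of cofinally many direct limit maps); your argument never invokes it, which is a further sign the approach is off track. Three smaller omissions: you never check that your filter is an \emph{element} of $\mathsf{CDM}^+(\mathcal{Q},\eta)$ (the paper does this via $A\in\nu\iff\exists B\in\mu_{\Theta}\,(\sup[B]\subset A)$); normality needs no tail-uniformity argument, since the restriction of the $\mathcal{V}[g]$-normal club filter to the inner model is automatically normal there; and $\mathbb{R}$-completeness is obtained by reducing to $\Theta$-completeness via the sets $B_{\alpha}=\{x\mid\alpha\in f(x)\}$ and the nonexistence, under $\mathsf{AD}_{\mathbb{R}}$, of a Wadge-cofinal map from an ordinal into $\power(\mathbb{R})$ --- not by a uniform selection of catching data over $u\in\mathbb{R}^*_g$.
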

\begin{proof}
    We write $\Theta=\Theta^{\mathsf{CDM}^+(\mathcal{Q}, \eta)}$.
    Let $\nu$ be the club filter on $\Theta\cap\mathrm{Cof}(\omega)$.
    Repeating the same argument in the proof of \cref{locality_of_CDM_plus}, we will show that $\nu\cap\mathsf{CDM}^+(\mathcal{Q}, \eta)\in\mathsf{CDM}^+(\mathcal{Q}, \eta)$ and it is an $\mathbb{R}$-complete normal ultrafilter in $\mathsf{CDM}^+(\mathcal{Q}, \eta)$.
    
    Let $\mathcal{R}$ be a genericity iterate of $\mathcal{Q}$, let $k\subset\mathrm{Col}(\omega, {<}\delta)$ be an $\mathcal{R}$-maximal generic, and let $\xi<\delta$.
    For any $\mathcal{R}^*\in\mathcal{F}^*_k(\mathcal{R}, \xi)$, we set
    \[
    \sigma'_{\mathcal{R}^*, \xi}=\sup(\mathrm{ran}(\pi^{\mathcal{R}, \xi}_{\mathcal{R}^*, \infty})\cap\Theta)\in\Theta\cap\mathrm{Cof}(\omega)
    \]
    and define
    \[
    C'_{\mathcal{R}, \xi}=\{\sigma'_{\mathcal{R}^*, \xi}\mid\mathcal{R}^*\in\mathcal{F}^*_k(\mathcal{R}, \xi)\land\Theta\in\mathrm{ran}(\pi^{\mathcal{R}, \xi}_{\mathcal{R}^*, \infty})\}.
    \]
    By the same argument as \cref{finding_clubs,club_dichotomy}, we have the following claims.
    
    \begin{claim}
        Whenever $\mathcal{R}$ is a genericity iterate of $\mathcal{Q}$ above $\eta$, $k\subset\mathrm{Col}(\omega, {<}\delta)$ is a maximal $\mathcal{R}$-generic, and $\xi\in[\eta, \delta)$, then the set $C'_{\mathcal{R}, \xi}$ contains a club subset of $\Theta$ in $\mathcal{V}_{\mathcal{R}}[k]$.
    \end{claim}
    
    \begin{claim}
        Let $A\subset\Theta$ be in $\mathsf{CDM}^+(\mathcal{Q}, \eta)$.
        Then there are a genericity iterate $\mathcal{R}$ of $\mathcal{Q}$ and a $\xi\in [\eta, \delta)$ such that the following hold:
        \begin{enumerate}
            \item If $\sigma'_{\mathcal{R}, \xi}\in A$ then $C'_{\mathcal{R}, \xi}\subset A$.
            \item If $\sigma'_{\mathcal{R}, \xi}\notin A$ then $C'_{\mathcal{R}, \xi}\subset (\Theta\cap\mathrm{Cof}(\omega))\setminus A$.
        \end{enumerate}
    \end{claim}

    These claims imply that $\nu\cap\mathsf{CDM}^+(\mathcal{Q}, \eta)$ is an ultrafilter on $\Theta\cap\mathrm{Cof}(\omega)$ over $\mathsf{CDM}^+(\mathcal{Q}, \eta)$.
    Another consequence of the claims is that
    \[
    A\in\nu\iff\exists B\in\mu_{\Theta}(\sup[B]\subset A),
    \]
    where $\mu_{\Theta}$ is the club filter on $\power_{\omega_1}(\Theta)$ in $\mathcal{V}[g]$ and $\sup[B]=\{\sup(\sigma)\mid\sigma\in B\}$.
    As $\mu_{\Theta}\cap\mathsf{CDM}^+(\mathcal{Q}, \eta)\in\mathsf{CDM}^+(\mathcal{Q}, \eta)$, we have $\nu\cap\mathsf{CDM}^+(\mathcal{Q}, \eta)\in\mathsf{CDM}^+(\mathcal{Q}, \eta)$.
    
    It is easy to see that in $\mathsf{CDM}^+(\mathcal{Q}, \eta)$, $\nu':=\nu\cap\mathsf{CDM}^+(\mathcal{Q}, \eta)$ is a $\Theta$-complete normal ultrafilter, since $\nu$ is a $\Theta$-complete normal filter in $\mathcal{V}[g]$.
    It remains to show the $\mathbb{R}$-completeness of $\nu'$.
    This follows from $\Theta$-completeness by the proof of \cite[Theorem 2.6]{PmaxSquare}.
    We write their short proof here for the reader's convenience.
    From now, we work in $\mathsf{CDM}^+(\mathcal{Q}, \eta)$ and let $f\colon\mathbb{R}\to\nu'$.
    For each $\alpha<\Theta$, let 
    \[
    B_{\alpha}=\{x\in\mathbb{R}\mid\alpha\in f(x)\}.
    \]
    Since $\mathsf{AD}_{\mathbb{R}}$ holds (in $\mathsf{CDM}^+(\mathcal{Q}, \eta)$) by \cref{determinacy_in_CDM_plus_1}, there is no Wadge-cofinal function from an ordinal to $\power(\mathbb{R})$ by \cite[Remark 2.5]{PmaxSquare}.
    So we can take some set of reals $B^*\subset\mathbb{R}$ with Wadge rank $\geq\sup\{B_{\alpha}\mid\alpha<\Theta\}$.
    Then $\lvert\{B_{\alpha}\mid\alpha<\Theta\}\rvert<\Theta$ because otherwise we could define a prewellordering of reals $\prec$ of length $\Theta$ as follows: for any $x_0, x_1\in\mathbb{R}$, $x_0\prec x_1$ if and only if for any $i\in\{0, 1\}$, $x_i$ codes a continuous function $f_i$ such that $B_{\alpha_i}=f_i^{-1}[B^*]$ for some ordinal $\alpha_i<\Theta$, and $\alpha_0<\alpha_1$.
    Since $\nu'$ is $\Theta$-complete, we can choose a $B\subset\mathbb{R}$ such that $\{\alpha<\Theta\mid B_{\alpha}=B\}\in\nu'$.
    For any $x\in\mathbb{R}$, $f(x)\in\nu'$.
    So there is an $\alpha<\Theta$ such that $B_{\alpha}=B$ and $\alpha\in f(x)$, which implies $x\in B_{\alpha}=B$.
    Therefore, $B=\mathbb{R}$.
    Then $\bigcap_{x\in\mathbb{R}}f(x)=\{\alpha<\Theta\mid B_{\alpha}=\mathbb{R}\}\in\nu'$, which completes the proof of $\mathbb{R}$-completeness.
\end{proof}

\begin{cor}\label{P(R)-supercompactness}
    Let $\mathcal{Q}\in I^*_g(\mathcal{P}, \Sigma)$ and $\eta<\delta$ be such that $\mathcal{Q}$ is a genericity iteration of $\mathcal{P}$ and $(\mathcal{Q}, \eta)$ stabilizes $\delta_{\infty}$.
    Also, suppose that $\delta^{\mathcal{Q}, \eta}_{\infty}>\Theta$.
    Then, in $\mathsf{CDM}^+(\mathcal{Q}, \eta)$, $\omega_1\text{ is }\power(\mathbb{R})$-supercompact witnessed by the club filter.
\end{cor}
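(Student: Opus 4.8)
The plan is to read the corollary off the two properties of $\mathsf{CDM}^{+}(\mathcal{Q},\eta)$ already established: by \cref{determinacy_in_CDM_plus_1} it satisfies $\mathsf{AD}^{+}+\mathsf{AD}_{\mathbb{R}}$, and by \cref{Theta_measurable} (this is where the hypothesis $\delta^{\mathcal{Q},\eta}_{\infty}>\Theta$ is used) $\Theta$ is measurable in it. Together with Trang's theorem \cite{Tr13,Tr15MALQ,EquiSpctMeas}, which yields that in any model of $\mathsf{AD}^{+}+\mathsf{AD}_{\mathbb{R}}+$ ``$\Theta$ is measurable'' the club filter on $\power_{\omega_{1}}(\power(\mathbb{R}))$ is a supercompact measure, and since that implication is a theorem of $\mathsf{ZF}$ plus the listed determinacy hypotheses, it applies verbatim inside $\mathsf{CDM}^{+}(\mathcal{Q},\eta)$ and the corollary follows at once. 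The only auxiliary hypotheses the argument invokes are $\mathsf{DC}_{\mathbb{R}}$, which holds in $\mathsf{CDM}^{+}(\mathcal{Q},\eta)$ because it follows from $\mathsf{AD}^{+}$, and $\cf(\Theta)>\omega$, which holds there because a countably complete ultrafilter on $\Theta$ rules out $\cf(\Theta)=\omega$ (and in any case this is part of \cref{determinacy_in_CDM_plus_2} when $\delta$ is regular).

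For orientation I would recall the shape of Trang's argument, carried out inside the model. Under $\mathsf{AD}^{+}$ there is a surjection $\pi\colon\mathbb{R}\times\Theta\to\power(\mathbb{R})$: fix a sequence $\langle U_{\alpha}\mid\alpha<\Theta\rangle$ of sets of reals with $U_{\alpha}$ of Wadge rank at least $\alpha$ (such a sequence lies in the derived model, which satisfies $\mathsf{AD}^{+}+V=L(\power(\mathbb{R}))$ and has the same sets of reals as $\mathsf{CDM}^{+}(\mathcal{Q},\eta)$ by \cref{sets_of_reals_in_CDM_plus}, hence lies in $\mathsf{CDM}^{+}(\mathcal{Q},\eta)$), and set $\pi(x,\alpha)=f^{-1}[U_{\alpha}]$ when $x$ codes a continuous function $f$ and $\pi(x,\alpha)=\emptyset$ otherwise; since every set of reals has a Wadge rank below $\Theta$, $\pi$ is onto. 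By Solovay's theorem \cite{So21} the club filter on $\power_{\omega_{1}}(\mathbb{R})$ is a supercompact measure, and for every $\alpha\le\Theta$ the club filter on $\power_{\omega_{1}}(\alpha)$ is a supercompact measure, by \cref{spct_meas_in_CDM_plus} (noting $\Theta<\delta^{\mathcal{Q},\eta}_{\infty}$), Woodin's theorem recalled in the remark after it, and the measurability of $\Theta$. Combining these gives a supercompact measure on $\power_{\omega_{1}}(\mathbb{R}\times\Theta)$, again the club filter, and since $\tau\mapsto\pi[\tau]$ is a surjection $\power_{\omega_{1}}(\mathbb{R}\times\Theta)\to\power_{\omega_{1}}(\power(\mathbb{R}))$ commuting with increasing countable unions, preimages of clubs are clubs; pushing the club filter forward along $\pi$ therefore produces a supercompact measure on $\power_{\omega_{1}}(\power(\mathbb{R}))$ extending the club filter, and one checks the two agree.

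The one genuinely delicate point is that last agreement: a direct image $\pi[C]$ of a club $C$ need not be closed, since preimages of an increasing sequence $\langle\pi[\tau_{n}]\mid n<\omega\rangle$ need not be increasing, so one cannot pass to direct images naively. I expect the safest route---and likely the one the paper follows, given its method---is to sidestep the push-forward bookkeeping entirely and instead rerun the genericity-iteration and club-dichotomy argument of \cref{locality_of_CDM_plus} and \cref{club_dichotomy} directly with $\power(\mathbb{R})$ (equivalently, via $\pi$, with $\Theta\times\mathbb{R}$) in the role previously played by the ordinal $\alpha$. Concretely: given $A\subset\power_{\omega_{1}}(\power(\mathbb{R}))$ in $\mathsf{CDM}^{+}(\mathcal{Q},\eta)$, write it with parameters as in the proof of \cref{club_dichotomy}, catch those parameters in a genericity iterate $\mathcal{R}$ of $\mathcal{Q}$ above $\eta$, set $\hat\sigma_{\mathcal{R}^{*},\xi}=\pi[\mathrm{ran}(\pi^{\mathcal{R},\xi}_{\mathcal{R}^{*},\infty})\cap(\Theta\times\mathbb{R})]$, observe as in \cref{finding_clubs} that these sets sweep out a club in $\power_{\omega_{1}}(\power(\mathbb{R}))$ (using surjectivity of $\pi$ and its commuting with countable unions), and apply the dichotomy of \cref{club_dichotomy} to conclude that $A$ or its complement contains such a club. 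This is the argument of \cref{Theta_measurable} with $\power(\mathbb{R})$ in place of $\Theta\cap\mathrm{Cof}(\omega)$, so the remaining work is bookkeeping rather than new ideas; the potential obstacle is only making sure that every appeal in the proof of \cref{club_dichotomy} to $\alpha<\delta^{\mathcal{Q},\eta}_{\infty}$ is replaced correctly by an appeal to $\pi$ together with $\Theta<\delta^{\mathcal{Q},\eta}_{\infty}$.
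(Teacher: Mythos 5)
Your proposal is correct and follows essentially the same route as the paper: \cref{P(R)-supercompactness} is deduced from \cref{determinacy_in_CDM_plus_1} and \cref{Theta_measurable} by invoking Trang's folklore theorem \cite[Theorem 3.1]{Tr15MALQ} that $\mathsf{AD}_{\mathbb{R}}+{}$``$\Theta$ is measurable'' gives $\power(\mathbb{R})$-supercompactness of $\omega_1$. For the clause ``witnessed by the club filter'' the paper neither pushes forward along a surjection nor reruns the genericity-iteration dichotomy; it simply recalls that Trang's measure is given by $A\in\mu_{\power(\mathbb{R})}$ iff $\{\alpha<\Theta\mid A\cap\power_{\omega_1}(\Gamma_{\alpha})\in\mu_{\Gamma_{\alpha}}\}\in\nu$, where $\Gamma_{\alpha}$ is the Wadge initial segment, $\mu_{\Gamma_{\alpha}}$ is its club filter (an ultrafilter by \cite{Wo20}), and $\nu$ is the club-filter-derived measure on $\Theta$ from \cref{Theta_measurable}, and then observes that this composite is the club filter on $\power_{\omega_1}(\power(\mathbb{R}))$ --- so the closure issue you flag for naive push-forwards is sidestepped by this stratified presentation.
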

\begin{proof}
    It is a folklore result that if $\mathsf{AD}_{\mathbb{R}}+\Theta$ is measurable, then $\omega_1$ is $\power(\mathbb{R})$-supercompact.
    This is proved in \cite[Theorem 3.1]{Tr15MALQ}.
    In the proof, the supercompact measure $\mu_{\power(\mathbb{R})}$ on $\power_{\omega_1}(\power(\mathbb{R}))$ is defined as follows: Let $\Gamma_{\alpha}\subset\power(\mathbb{R})$ be the set of sets of reals with Wadge rank $<\alpha$.
    Then for any countable $A\subset\power(\mathbb{R})$, we define
    \[
    A\in\mu_{\power(\mathbb{R})}\iff\{\alpha<\Theta\mid A\cap\power_{\omega_1}(\Gamma_{\alpha})\in\mu_{\Gamma_{\alpha}}\}\in\nu,
    \]
    where $\nu$ is an $\mathbb{R}$-complete normal ultrafilter on $\Theta$ and $\mu_{\Gamma_{\alpha}}$ is the club filter on $\power_{\omega_1}(\Gamma_{\alpha})$, which is an ultrafilter by \cite{Wo20}.
    Therefore, $\mu_{\power(\mathbb{R})}$ is the club filter on $\power_{\omega_1}(\power(\mathbb{R}))$.
\end{proof}

\begin{ques}
    Is there a variant of $\mathsf{CDM}$ where $\omega_1$ is $\power(\power(\mathbb{R}))$-supercompact?
\end{ques}

\begin{ques}
    In $\mathsf{CDM}^+(\mathcal{Q}, \eta)$, are all normal ultrafilters on $\power_{\omega_1}(\alpha), \power_{\omega_1}(\power(\mathbb{R}))$, and $\Theta$ club filters?
\end{ques}

\section{Value of $\delta_{\infty}$}

Let $\eta<\delta$ and let $\mathcal{Q}$ be a genericity iterate of $\mathcal{P}$.
First, we will argue which cardinal of $\mathcal{Q}$ is moved to $\Theta^{\mathsf{CDM}^+(\mathcal{Q}, \eta)}$ under the direct limit map $\pi^{\mathcal{Q}, \eta}_{\mathcal{Q}, \infty}\colon\mathcal{Q}\to\mathcal{M}_{\infty}(\mathcal{Q}, \eta)$.
Let
\[
\kappa^{\mathcal{Q}, \eta}=\begin{cases}
\text{the least ${<}\delta$-strong cardinal in $\mathcal{Q}$ in the interval $(\eta, \delta)$} & \text{if it exists,}\\
\delta & \text{otherwise.}
\end{cases}
\]
For $\kappa>\eta$, we say that $\kappa$ is an $\eta$-cutpoint of an lbr hod premouse $\mathcal{M}$ if for any extender $E$ on the extender sequence of $\mathcal{M}$, if $\crit(E)<\kappa\leq\lh(E)$ then $\crit(E)<\eta$.
Then $\kappa^{\mathcal{Q}, \eta}$ is the largest $\eta$-cutpoint of $\mathcal{Q}$ less than or equal to $\delta$.
Also, we let $\kappa^{\mathcal{Q}, \eta}_{\infty}$ be the direct limit image of $\kappa^{\mathcal{Q}, \eta}$ in $\mathcal{M}_{\infty}(\mathcal{Q}, \eta)$.

\begin{thm}\label{value_of_Theta}
Let $\eta<\delta$ and let $\mathcal{Q}\in I^*_g(\mathcal{P}, \Sigma)$ be a genericity iterate of $\mathcal{P}$.
Then $\kappa^{\mathcal{Q}, \eta}_{\infty}=\Theta^{\mathsf{CDM}^{+}(\mathcal{Q}, \eta)}$.
\end{thm}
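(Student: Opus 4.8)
The plan is to prove the inequalities $\kappa^{\mathcal{Q},\eta}_{\infty}\le\Theta^{\mathsf{CDM}^{+}(\mathcal{Q},\eta)}$ and $\Theta^{\mathsf{CDM}^{+}(\mathcal{Q},\eta)}\le\kappa^{\mathcal{Q},\eta}_{\infty}$ separately, after reducing the problem to a statement about Wadge ranks in $\Gamma^*_g$. For the reduction I would first replace $\mathcal{Q}$ by a genericity iterate of itself above $\eta$ so that $(\mathcal{Q},\eta)$ stabilizes $\delta_{\infty}$; by \cref{pres_of_M_infty} this changes neither $\mathcal{M}_{\infty}(\mathcal{Q},\eta)$ nor $\kappa^{\mathcal{Q},\eta}_{\infty}$, and by \cref{pres_of_CDM_plus} it leaves $\mathsf{CDM}^{+}(\mathcal{Q},\eta)$ unchanged, so nothing is lost. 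Then \cref{sets_of_reals_in_CDM_plus} gives $\mathsf{CDM}^{+}(\mathcal{Q},\eta)\cap\power(\mathbb{R}^*_g)=\Gamma^*_g$, and moreover $\mathsf{DM}\subseteq\mathsf{CDM}^{+}(\mathcal{Q},\eta)$. A surjection $f\colon\mathbb{R}^*_g\to\alpha$ in $\mathsf{CDM}^{+}(\mathcal{Q},\eta)$ yields, definably over $\mathsf{CDM}^{+}(\mathcal{Q},\eta)$, a prewellordering of $\mathbb{R}^*_g$ of length $\ge\alpha$, which is a set of reals and hence lies in $\Gamma^*_g$; conversely a prewellordering of $\mathbb{R}^*_g$ of length $\alpha$ in $\Gamma^*_g$ gives a surjection $\mathbb{R}^*_g\to\alpha$ already in $\mathsf{DM}$. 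Hence $\Theta^{\mathsf{CDM}^{+}(\mathcal{Q},\eta)}=\sup\{\,w(A)+1:A\in\Gamma^*_g\,\}$, where $w(\cdot)$ denotes Wadge rank, and it suffices to show this supremum is $\kappa^{\mathcal{Q},\eta}_{\infty}$.

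For the lower bound, fix $\alpha<\kappa^{\mathcal{Q},\eta}_{\infty}$. Since $\mathcal{M}_{\infty}(\mathcal{Q},\eta)$ is the direct limit of $\mathcal{F}^*_g(\mathcal{Q},\eta)$, there are $\mathcal{R}\in\mathcal{F}^*_g(\mathcal{Q},\eta)$ and $\bar\alpha$ with $\alpha=\pi^{\mathcal{Q},\eta}_{\mathcal{R},\infty}(\bar\alpha)$; by elementarity $\bar\alpha<\kappa^{\mathcal{R},\eta}=\pi_{\mathcal{Q},\mathcal{R}}(\kappa^{\mathcal{Q},\eta})$. A ${<}\delta$-strong cardinal of $\mathcal{R}$ lying below a limit of Woodin cardinals of $\mathcal{R}$ is itself a limit of Woodin cardinals of $\mathcal{R}$, so (this being trivial when $\kappa^{\mathcal{R},\eta}=\delta$) one may pick a Woodin cardinal $\delta_{0}$ of $\mathcal{R}$ with $\max(\eta,\bar\alpha)<\delta_{0}<\kappa^{\mathcal{R},\eta}$. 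As $\mathcal{R}$ is a $\Sigma$-iterate of $\mathcal{P}$, the tail strategy $\Sigma^{g}_{\mathcal{R}\vert\delta_{0}}$ is a tail of some $\Sigma^{g}_{\mathcal{P}\vert\zeta}$ with $\zeta<\delta$, so $\mathrm{Code}(\Sigma^{g}_{\mathcal{R}\vert\delta_{0}})\in\Gamma^*_g$; and by the $\mathrm{HOD}$-analysis of derived models (in the form developed in \cite{CPMP} and used in \cite{dm_of_self_it,CoveringChang}), the Wadge ranks of the sets $\mathrm{Code}(\Sigma^{g}_{\mathcal{R}\vert\delta_{0}})$, as $\delta_{0}$ ranges over the Woodin cardinals of $\mathcal{R}$ in $(\eta,\kappa^{\mathcal{R},\eta})$, are cofinal in $\pi^{\mathcal{R},\eta}_{\mathcal{R},\infty}(\kappa^{\mathcal{R},\eta})=\kappa^{\mathcal{Q},\eta}_{\infty}$. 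So $\Gamma^*_g$ contains an element of Wadge rank $>\alpha$, whence $\alpha<\Theta^{\mathsf{CDM}^{+}(\mathcal{Q},\eta)}$; taking the supremum over $\alpha$ gives $\kappa^{\mathcal{Q},\eta}_{\infty}\le\Theta^{\mathsf{CDM}^{+}(\mathcal{Q},\eta)}$.

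The reverse inequality is the crux, and the expected main obstacle. Given $A\in\Gamma^*_g$, I would catch it by a genericity iteration of $\mathcal{Q}$ above $\eta$, exactly as in the proofs of \cref{locality_of_CDM_plus} and \cref{determinacy_in_CDM_plus_2}, obtaining a genericity iterate $\mathcal{R}$ of $\mathcal{Q}$ above $\eta$ with $A$ Wadge-reducible to $\mathrm{Code}(\Sigma^{g}_{\mathcal{R}\vert\xi})$ for some $\xi<\delta$; the task is then to show $w(\mathrm{Code}(\Sigma^{g}_{\mathcal{R}\vert\xi}))<\kappa^{\mathcal{Q},\eta}_{\infty}$. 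This is exactly where the choice of $\kappa^{\mathcal{Q},\eta}$ as a ${<}\delta$-strong cardinal is used: $\kappa^{\mathcal{R},\eta}$ is an $\eta$-cutpoint of $\mathcal{R}$, so every iteration tree on $\mathcal{R}\vert\xi$ above $\eta$ decomposes into a part using only extenders of $\mathcal{R}\vert\kappa^{\mathcal{R},\eta}$ and a part above $\kappa^{\mathcal{R},\eta}$ based on $\mathcal{R}\vert\delta$; by the internal self-iterability of $\mathcal{R}$ on trees based on $\mathcal{R}\vert\delta$ above $\kappa^{\mathcal{R},\eta}$ — the property underlying the well-definedness of the system in \cref{dfn:internal_direct_limit_system}, supplied by the generic-interpretability clause of excellence — the portion of $\Sigma^{g}_{\mathcal{R}\vert\xi}$ governing trees above $\kappa^{\mathcal{R},\eta}$ is definable over generic extensions of $\mathcal{R}$ and does not raise Wadge rank, while the portion governing trees below $\kappa^{\mathcal{R},\eta}$ (together with the fixed part of $\mathcal{R}$ below $\eta$) is captured by sets of reals of Wadge rank $<\kappa^{\mathcal{R},\eta}_{\infty}=\kappa^{\mathcal{Q},\eta}_{\infty}$, by the lower-bound analysis and \cref{pres_of_M_infty}. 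Hence $w(A)<\kappa^{\mathcal{Q},\eta}_{\infty}$, and combining the two inequalities completes the proof. The delicate ingredients are the exact Wadge-rank/direct-limit dictionary for strategy codes used in the lower bound and, above all, this internal-self-iterability analysis above the ${<}\delta$-strong cutpoint $\kappa^{\mathcal{R},\eta}$; both are drawn from the least-branch hod-mouse machinery of \cite{CPMP} and the $\mathrm{HOD}$-analysis of derived models.
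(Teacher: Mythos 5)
Your overall skeleton (two inequalities, reduction to Wadge ranks in $\Gamma^*_g$) is reasonable, and your lower bound $\kappa^{\mathcal{Q},\eta}_{\infty}\le\Theta$ is close in spirit to the paper's, though you route it through a quantitative ``Wadge-rank/direct-limit dictionary'' (the ranks $w(\mathrm{Code}(\Sigma^g_{\mathcal{R}\vert\delta_0}))$ being cofinal in $\kappa^{\mathcal{Q},\eta}_{\infty}$) that is nowhere established in this paper and is stronger than what is needed. The paper's argument for this direction is more elementary: if $\Theta<\kappa^{\mathcal{Q},\eta}_{\infty}$, pick $\mathcal{R}\in\mathcal{F}^*_g(\mathcal{Q},\eta)$ with $\Theta=\pi^{\mathcal{Q},\eta}_{\mathcal{R},\infty}(\xi)$ for some $\xi<\kappa^{\mathcal{R},\eta}$; since every extender overlapping $\kappa^{\mathcal{R},\eta}$ has critical point $\le\eta$, the map $\pi^{\mathcal{Q},\eta}_{\mathcal{R},\infty}\uphar\kappa^{\mathcal{R},\eta}$ is computable from the single set of reals $\mathrm{Code}(\Sigma^g_{\mathcal{R}\vert\kappa^{\mathcal{R},\eta}})\in\Gamma^*_g$, so $\Theta$ would be ordinal definable from a set of reals (indeed a surjective image of $\mathbb{R}$) inside $\mathsf{CDM}^+(\mathcal{Q},\eta)$, a contradiction. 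Also, your opening reduction --- passing to a further genericity iterate of $\mathcal{Q}$ above $\eta$ so that $(\mathcal{Q},\eta)$ stabilizes $\delta_{\infty}$ \emph{with the same} $\eta$ --- is not what \cref{stabilizing_delta_infty} provides: its proof increases $\eta$ along the way, and stabilization quantifies over all $\xi\in[\eta,\delta)$, so for a fixed $\eta$ no iterate need stabilize.

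The genuine gap is the direction $\Theta\le\kappa^{\mathcal{Q},\eta}_{\infty}$, which you correctly identify as the crux but do not prove. Your plan reduces it to the claim that $w(\mathrm{Code}(\Sigma^g_{\mathcal{R}\vert\xi}))<\kappa^{\mathcal{Q},\eta}_{\infty}$ for every $\xi<\delta$, and the justification offered --- that the part of the strategy acting above the cutpoint $\kappa^{\mathcal{R},\eta}$ is ``internally definable and does not raise Wadge rank'' --- is essentially a restatement of that claim rather than an argument; since the Wadge ranks of these codes are cofinal in $\Theta$, bounding each of them by $\kappa^{\mathcal{Q},\eta}_{\infty}$ is exactly the theorem. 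The paper proves this inequality by an entirely different mechanism: it shows that $\Theta$ is an $\eta$-cutpoint of $\mathcal{M}_{\infty}(\mathcal{Q},\eta)$. Supposing there is an extender $E$ on the $\mathcal{M}_{\infty}$-sequence with $\eta\le\crit(E)<\Theta\le\lh(E)$, the characterization of extenders of HOD (\cite[Theorem 1.5]{char_of_ext_in_HOD}) produces a countably complete ultrafilter $U$ over $\mathsf{CDM}^+(\mathcal{Q},\eta)$ with $\crit(U)=\crit(E)$ and $\pi_U(\crit(E))\ge\pi_E(\crit(E))$; Kunen's theorem makes $U$ ordinal definable, and the Moschovakis coding lemma then turns the resulting OD surjection $\power(\crit(E))\to\theta_{\alpha+1}$ into an $\mathrm{OD}(A)$ surjection $\mathbb{R}\to\theta_{\alpha+1}$ for $A$ of Wadge rank $\theta_{\alpha}$, contradicting the definition of the Solovay sequence. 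None of these ingredients (the extender characterization, Kunen's theorem, the coding lemma, the Solovay sequence) appears in your proposal, and without them --- or an actual proof of your Wadge-rank bound --- the upper bound is unsupported.
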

\begin{proof}
    Let $\Theta=\Theta^{\mathsf{CDM}^{+}(\mathcal{Q}, \eta)}$.
    The following claim implies that $\Theta\leq\kappa^{\mathcal{Q}, \eta}_{\infty}$.
    \begin{claim}
        $\Theta$ is an $\eta$-cutpoint of $\mathcal{M}_{\infty}(\mathcal{Q}, \eta)$.
    \end{claim}
    \begin{proof}
        The claim follows from the proof of \cite[Theorem 1.7]{char_of_ext_in_HOD}, but we write it here for the reader's convenience.
        We work in $\mathsf{CDM}^{+}(\mathcal{Q}, \eta)$.
        Then $\mathsf{AD}^+ +\mathsf{AD}_{\mathbb{R}}$ holds, so $\Theta$ is a limit member of the Solovay sequence.
        Suppose toward a contradiction that there is an extender $E$ in the extender sequence of $\mathcal{M}_{\infty}(\mathcal{Q}, \eta)$ such that $\eta\leq\crit(E)<\Theta\leq\lh(E)$. Let $\kappa=\crit(E)$ and let $\theta_{\alpha+1}<\Theta$ be the least member of the Solovay sequence above $\kappa$. By \cite[Theorem 1.5]{char_of_ext_in_HOD}\footnote{The theorem is not stated in \cite{char_of_ext_in_HOD} in the generality we need. See \cite[Theorem 0.3]{Suslin_and_cutpoints}.}, there is a countably complete ultrafilter $U$ (over $\mathsf{CDM}^+(\mathcal{Q}, \eta)$) such that $\kappa=\crit(U)$ and $\pi_{U}(\kappa)\geq\pi_{E}(\kappa)$.
        Then  $U$ is ordinal definable by Kunen's theorem (\cite[Theorem 7.6]{St09}).\footnote{Some literature assumes $\mathsf{AD}+\mathsf{DC}$ for Kunen's theorem, but $\mathsf{AD}+\mathsf{DC}_{\mathbb{R}}$ is enough.}
        So there is an OD surjection $\power(\kappa)\to\pi_{U}(\kappa)$.
        Since $\theta_{\alpha+1}<\pi_{E}(\kappa)\leq\pi_{U}(\kappa)$, we can take an OD surjection $f\colon\power(\kappa)\to\theta_{\alpha+1}$.
        Let $A$ be any set of reals of Wadge rank $\theta_{\alpha}$.
        Then there is an $\mathrm{OD}(A)$ surjection $\mathbb{R}\to\kappa$ as $\kappa<\theta_{\alpha+1}$.
        Moschovakis coding lemma (\cite[Section 7D]{Mo09}) implies that there is an $\mathrm{OD}(A)$ surjection $g\colon\mathbb{R}\to\power(\kappa)$.
        Then $f\circ g\colon\mathbb{R}\to\theta_{\alpha+1}$ is an $\mathrm{OD}(A)$ surjection, which is a contradiction.
    \end{proof}
    Suppose toward a contradiction that $\Theta<\kappa^{\mathcal{Q}, \eta}_{\infty}$.
    Then there is an $\mathcal{R}\in\mathcal{F}^*_g(\mathcal{Q}, \eta)$ such that $\Theta=\pi^{\mathcal{Q}, \eta}_{\mathcal{R}, \infty}(\xi)$ for some $\xi<\kappa^{\mathcal{R}, \eta}$.
    Since every extender in the extender sequence of $\mathcal{R}$ overlapping $\kappa^{\mathcal{R}, \eta}$ has critical point $\leq\eta$, $\pi^{\mathcal{Q}, \eta}_{\mathcal{R}, \infty}\uphar\kappa^{\mathcal{R}, \eta}$ is an iteration map according to the fragment of $\Sigma_{\mathcal{R}}$ acting the iterations based on the window $(\eta, \kappa^{\mathcal{R}, \eta})$.
    Since $\mathrm{Code}(\Sigma^g_{\mathcal{R}\vert\kappa^{\mathcal{R}, \eta}})\in\Gamma^*_g$, $\Theta=\pi^{\mathcal{Q}, \eta}_{\mathcal{R}, \infty}(\xi)$ is ordinal definable from a set of reals in $\mathsf{CDM}^+(\mathcal{Q}, \eta)$, which is a contradiction.
    Therefore, $\Theta=\kappa^{\mathcal{Q}, \eta}_{\infty}$.
\end{proof}

The following is an immediate corollary of \cref{value_of_Theta}.

\begin{cor}\label{main_thm_2}
Let $\eta<\delta$ and let $\mathcal{Q}$ be a genericity iterate of $\mathcal{P}$.
If, in $\mathcal{V}$, $\delta$ is a limit of Woodin cardinals that is also a limit of ${<}\delta$-strong cardinals, then $\delta^{\mathcal{Q}, \eta}_{\infty}>\Theta$.
\end{cor}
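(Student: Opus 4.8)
The plan is to deduce the statement directly from \cref{value_of_Theta}, the only extra input being the observation that under the present hypothesis $\kappa^{\mathcal{Q}, \eta}$ is a genuine ordinal strictly below $\delta$, rather than $\delta$ itself.

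First I would verify that $\delta$ remains a limit of ${<}\delta$-strong cardinals in $\mathcal{Q}$. Since $\mathcal{Q}\in I^*_g(\mathcal{P}, \Sigma)$ is a non-dropping iterate, the tree $\mathcal{T}_{\mathcal{P}, \mathcal{Q}}$ applied to $\mathcal{V}$ yields $\mathcal{V}_{\mathcal{Q}}$ together with a fully elementary iteration map $\pi\colon\mathcal{V}\to\mathcal{V}_{\mathcal{Q}}$, and $\pi(\delta)=\delta$ because $\mathcal{T}_{\mathcal{P}, \mathcal{Q}}$ is based on $\mathcal{P}\vert\delta$ with $\pi^{\mathcal{T}_{\mathcal{P}, \mathcal{Q}}}(\delta)=\delta$. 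Under the standing convention that large-cardinal properties of lbr hod premice are the ones witnessed by the extender sequence, elementarity of $\pi$ transfers ``$\delta$ is a limit of ${<}\delta$-strong cardinals'' from $\mathcal{V}$ to $\mathcal{V}_{\mathcal{Q}}$, and hence to $\mathcal{Q}=\mathcal{V}_{\mathcal{Q}}\vert(\delta^+)^{\mathcal{V}_{\mathcal{Q}}}$, whose extender sequence agrees with that of $\mathcal{V}_{\mathcal{Q}}$ below $\delta$.

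Next, because $\delta$ is a limit of ${<}\delta$-strong cardinals of $\mathcal{Q}$ and $\eta<\delta$, there is a ${<}\delta$-strong cardinal of $\mathcal{Q}$ in the interval $(\eta, \delta)$; so the first clause in the definition of $\kappa^{\mathcal{Q}, \eta}$ applies and $\kappa^{\mathcal{Q}, \eta}<\delta$. The direct limit map $\pi^{\mathcal{Q}, \eta}_{\mathcal{Q}, \infty}\colon\mathcal{Q}\to\mathcal{M}_{\infty}(\mathcal{Q}, \eta)$ is elementary, hence strictly monotone on ordinals, so
\[
\Theta^{\mathsf{CDM}^+(\mathcal{Q}, \eta)}=\kappa^{\mathcal{Q}, \eta}_{\infty}=\pi^{\mathcal{Q}, \eta}_{\mathcal{Q}, \infty}(\kappa^{\mathcal{Q}, \eta})<\pi^{\mathcal{Q}, \eta}_{\mathcal{Q}, \infty}(\delta)=\delta^{\mathcal{Q}, \eta}_{\infty},
\]
where the first equality is \cref{value_of_Theta}. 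This yields $\delta^{\mathcal{Q}, \eta}_{\infty}>\Theta$.

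There is no real obstacle here beyond \cref{value_of_Theta} itself: the only point deserving a word of justification is the transfer of the ``limit of ${<}\delta$-strong cardinals'' property along $\mathcal{T}_{\mathcal{P}, \mathcal{Q}}$, which is routine given elementarity of the iteration map on $\mathcal{V}_{\mathcal{Q}}$ together with $\pi^{\mathcal{T}_{\mathcal{P}, \mathcal{Q}}}(\delta)=\delta$.
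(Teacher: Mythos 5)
Your proof is correct and is exactly the argument the paper intends: the corollary is stated as an immediate consequence of \cref{value_of_Theta}, via the observation that under the hypothesis $\kappa^{\mathcal{Q},\eta}<\delta$, so $\Theta=\kappa^{\mathcal{Q},\eta}_{\infty}<\delta^{\mathcal{Q},\eta}_{\infty}$ by monotonicity of the direct limit map. Your extra verification that the hypothesis transfers from $\mathcal{V}$ to $\mathcal{Q}$ via elementarity of $\pi_{\mathcal{V},\mathcal{V}_{\mathcal{Q}}}$ and $\pi^{\mathcal{T}_{\mathcal{P},\mathcal{Q}}}(\delta)=\delta$ is the right (routine) justification, which the paper leaves implicit.
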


Next, we give a sufficient condition for an ordinal in $[\delta, \delta^{\mathcal{Q}, \eta}_{\infty}]$ to be a (regular) cardinal in $\mathsf{CDM}^+(\mathcal{Q}, \eta)$.

\begin{thm}
Let $\eta<\delta$ and let $\mathcal{Q}$ be a genericity iterate of $\mathcal{P}$ such that $(\mathcal{Q}, \eta)$ stabilizes $\delta_{\infty}$.
Also, let $\lambda\in[\delta, \delta^{\mathcal{Q}, \eta}_{\infty}]$.
Then the following hold.
\begin{enumerate}
\item Suppose that for any $\eta'\in [\eta, \delta)$ and any genericity iteration $\mathcal{S}$ of $\mathcal{Q}$ above $\eta$, $\lambda$ is a cardinal in $\mathcal{M}_{\infty}(\mathcal{S}, \eta')$.
Then $\lambda$ is a cardinal in $\mathsf{CDM}^{+}(\mathcal{Q}, \eta)$.
\item Suppose that for any $\eta'\in [\eta, \delta)$ and any genericity iteration $\mathcal{S}$ of $\mathcal{Q}$ above $\eta$, $\lambda$ is a regular cardinal in $\mathcal{M}_{\infty}(\mathcal{S}, \eta')$.
Then $\lambda$ is a regular cardinal in $\mathsf{CDM}^{+}(\mathcal{Q}, \eta)$.
\end{enumerate}
\end{thm}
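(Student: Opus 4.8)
The plan is to run the ``catching'' argument of \cref{club_dichotomy}, \cref{sets_of_reals_in_CDM_plus}, and \cref{determinacy_in_CDM_plus_2} one more time, now applied to a hypothetical witness to non-cardinality of $\lambda$, and to reflect that witness into one of the direct limit models $\mathcal{M}_\infty(\mathcal{S},\eta')$.

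For (1), suppose toward a contradiction that $\lambda$ is not a cardinal in $\mathsf{CDM}^+(\mathcal{Q},\eta)$, and fix the least $\bar\lambda<\lambda$ for which there is a surjection $f\colon\bar\lambda\to\lambda$ with $f\in\mathsf{CDM}^+(\mathcal{Q},\eta)$. As in the proof of \cref{sets_of_reals_in_CDM_plus}, write
\[
f=\{\langle\mu,\nu\rangle\in\bar\lambda\times\lambda\mid(\mathsf{CDM}^+(\mathcal{Q},\eta)\vert\gamma;{\in},\vec\mu)\models\phi[\mu,\nu,Y,Z,x,\vec\beta]\}
\]
with $\phi$ a formula, $\gamma$ an ordinal, $Y\in{}^{\omega}\zeta$ for some $\zeta<\delta^{\mathcal{Q},\eta}_\infty$, $Z\in\Gamma^*_g$, $x\in\mathbb{R}^*_g$ and $\vec\beta\in{}^{<\omega}\gamma$. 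Choose a genericity iterate $\mathcal{R}$ of $\mathcal{Q}$ above $\eta$ that catches the ordinal parameters, i.e.\ $\{\vec\beta,\gamma,\zeta,\bar\lambda,\lambda\}\cup\mathrm{ran}(Y)\subset\mathrm{ran}(\pi^{\mathcal{R},\eta}_{\mathcal{V}_{\mathcal{R}},\infty})$, and --- interleaving the club-finding construction of \cref{finding_clubs} with the genericity iteration --- so that, in addition, every ordinal below $\lambda$ lies in $\mathrm{ran}(\pi^{\mathcal{R},\eta}_{\mathcal{R},\infty})$. Let $\bar\lambda_{\mathcal{R}}<\lambda_{\mathcal{R}}\le\delta$ be the $\pi^{\mathcal{R},\eta}_{\mathcal{R},\infty}$-preimages of $\bar\lambda$ and $\lambda$ (so $\eta<\lambda_{\mathcal{R}}$), and introduce reals $y,z$ coding the relevant fragments of $Y$ and of $\pi_{\mathcal{P},\mathcal{R}}$ exactly as in \cref{club_dichotomy}. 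Then \cref{pres_of_local_CDM_plus} converts $\phi$ into a formula $\psi$ such that, for all $\mu<\bar\lambda_{\mathcal{R}}$ and $\nu<\lambda_{\mathcal{R}}$,
\[
f\bigl(\pi^{\mathcal{R},\eta}_{\mathcal{R},\infty}(\mu)\bigr)=\pi^{\mathcal{R},\eta}_{\mathcal{R},\infty}(\nu)\iff\mathcal{V}_{\mathcal{R}}[x,y,z]\models\psi[\mu,\nu,x,y,z,\eta,\delta,\vec\beta_{\mathcal{R}},\gamma_{\mathcal{R}}],
\]
and since $f$ is onto and every ordinal below $\lambda$ is caught, $f_{\mathcal{R}}:=\{\langle\mu,\nu\rangle\mid\mathcal{V}_{\mathcal{R}}[x,y,z]\models\psi[\ldots]\}$ is a surjection $\bar\lambda_{\mathcal{R}}\to\lambda_{\mathcal{R}}$ lying in $\mathcal{V}_{\mathcal{R}}[x,y,z]$.

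It then remains to reflect $f_{\mathcal{R}}$ down into a direct limit model, and this is where I expect the main difficulty to lie. Using Woodin's extender algebra and \cref{easy_lemma}(1), one passes to a genericity iterate $\mathcal{S}$ of $\mathcal{R}$ above $\eta$ over which $x,y,z$ become generic for the extender algebra at a Woodin cardinal $\delta'$ of $\mathcal{S}$ lying above where these reals appear, and invokes the $\mathrm{Col}(\omega,\cdot)$-term device $\tau_{\mathcal{S}}$ of \cref{projective_def_of_A} --- now a name for a bounded function on ordinals rather than for a set of reals. By \cref{stabilizing_parameters_1} and the elementarity of the canonical liftup $\pi^{+}_{\mathcal{V}_{\mathcal{R}},\mathcal{V}_{\mathcal{S}}}$, the parameters of $\psi$ and the graph of $f_{\mathcal{R}}$ are not moved, so one obtains that a surjection from an ordinal below $\lambda$ onto $\lambda$ is definable over $\mathcal{S}$ from a set of reals of the form $\mathrm{Code}(\Sigma^{k}_{\mathcal{S}\vert\delta'})$ in a manner invariant under further genericity iterations above a suitable $\eta'\in[\eta,\delta)$, whence this surjection is an element of $\mathcal{M}_\infty(\mathcal{S},\eta')$. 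Since $(\mathcal{Q},\eta)$ stabilizes $\delta_\infty$, this contradicts the hypothesis that $\lambda$ is a cardinal in $\mathcal{M}_\infty(\mathcal{S},\eta')$. The delicate point --- the main obstacle --- is to perform this reflection so that the generic reals $x,y,z$ get absorbed while $\lambda$, and not merely its $\mathcal{R}$-preimage, is what is shown to be non-cardinal in $\mathcal{M}_\infty(\mathcal{S},\eta')$ (i.e.\ the window in which $x,y,z$ are made generic must be chosen compatibly with $\eta'$ remaining below the preimage of $\lambda$), and to certify that the reflected object genuinely lands in $\mathcal{M}_\infty(\mathcal{S},\eta')$ rather than only in a generic extension of $\mathcal{S}$; I expect this to be handled by marrying the $\tau$-term argument of \cref{projective_def_of_A} with the club-finding interleaving used to catch all ordinals below $\lambda$.

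For (2), if $\lambda$ is not regular in $\mathsf{CDM}^+(\mathcal{Q},\eta)$, then, as regularity in every $\mathcal{M}_\infty(\mathcal{S},\eta')$ entails being a cardinal there, part (1) already shows $\lambda$ is a cardinal in $\mathsf{CDM}^+(\mathcal{Q},\eta)$, so a witness to non-regularity is a cofinal map $f\colon\bar\lambda\to\lambda$ with $\bar\lambda<\lambda$. Running the argument of (1) verbatim, with ``surjection'' replaced by ``cofinal map'', produces a genericity iterate $\mathcal{S}$ of $\mathcal{Q}$ above $\eta$ and an $\eta'\in[\eta,\delta)$ together with a cofinal map from an ordinal below $\lambda$ into $\lambda$ that lies in $\mathcal{M}_\infty(\mathcal{S},\eta')$, contradicting the regularity of $\lambda$ in $\mathcal{M}_\infty(\mathcal{S},\eta')$.
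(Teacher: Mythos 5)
There is a genuine gap, and it sits exactly where you flagged ``the main obstacle.'' Your plan hinges on choosing a single genericity iterate $\mathcal{R}$ so that \emph{every} ordinal below $\lambda$ lies in $\mathrm{ran}(\pi^{\mathcal{R},\eta}_{\mathcal{R},\infty})$, so that $f$ pulls back to a total surjection $f_{\mathcal{R}}\colon\bar\lambda_{\mathcal{R}}\to\lambda_{\mathcal{R}}$. This is impossible: $\pi^{\mathcal{R},\eta}_{\mathcal{R},\infty}$ is an order-preserving map on ordinals, so if $\lambda\subset\mathrm{ran}(\pi^{\mathcal{R},\eta}_{\mathcal{R},\infty})$ then the preimage of each $\alpha<\lambda$ is simultaneously $\le\alpha$ (since order-preserving maps do not decrease ordinals) and $\ge\alpha$ (the $\alpha$-th element of the image of an order-preserving injection is $\ge\alpha$), forcing $\crit(\pi^{\mathcal{R},\eta}_{\mathcal{R},\infty})\ge\lambda\ge\delta$, which contradicts the nontriviality of the direct limit system $\mathcal{F}^*_g(\mathcal{R},\eta)$ (the whole point is that new ordinals below $\delta^{\mathcal{R},\eta}_\infty$ keep appearing as one passes to further iterates). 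Interleaving the construction of \cref{finding_clubs} does not help: that construction produces, over $\delta$-many iterates, a club of \emph{countable} sets of the form $\mathrm{ran}(\pi^{\mathcal{R},\xi}_{\mathcal{R}^*,\infty})\cap\alpha$; their union over all iterates is all of $\alpha$, but it is not the range of any single direct limit map. Consequently your reflected object does not exist in the form you need it, and the second half of your argument (absorbing $x,y,z$ via the extender algebra and landing the surjection inside $\mathcal{M}_\infty(\mathcal{S},\eta')$) is also aimed at the wrong target: a map defined from the generic reals $x,y,z$ has no reason to belong to the direct limit model at all.

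The paper's proof circumvents both problems. It catches only the \emph{single} ordinals $\nu$ and $\lambda$ (together with the other parameters) in $\mathrm{ran}(\pi^{\mathcal{S},\eta'}_{\mathcal{S},\infty})$ for a suitable iterate $\mathcal{S}$ and $\eta'\in[\eta,\delta)$, pulls them back to $\nu_{\mathcal{S}}<\lambda_{\mathcal{S}}$, and defines in $\mathcal{S}[x,y,z]$ only a \emph{partial} function $\overline{f}\colon\nu_{\mathcal{S}}\rightharpoonup\lambda_{\mathcal{S}}$ by $\overline{f}(\alpha)=\beta$ iff $f(\pi^{\mathcal{S},\eta'}_{\mathcal{S},\infty}(\alpha))=\pi^{\mathcal{S},\eta'}_{\mathcal{S},\infty}(\beta)$. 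Surjectivity of $\overline{f}$ onto $\lambda_{\mathcal{S}}$ is then proved by a separate reflection: given $\beta^*<\lambda_{\mathcal{S}}$, pick $\alpha^*<\nu$ with $f(\alpha^*)=\pi^{\mathcal{S},\eta'}_{\mathcal{S},\infty}(\beta^*)$, catch $\alpha^*$ in a further genericity iterate $\mathcal{W}$, and use $f=\pi^+_{\mathcal{V}_{\mathcal{S}},\mathcal{V}_{\mathcal{W}}}(f)$ (the invariance from \cref{stabilizing_parameters}/\cref{pres_of_local_CDM_plus}) plus elementarity to pull $\beta^*$ into $\mathrm{ran}(\overline{f})$. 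The contradiction is then with $\lambda_{\mathcal{S}}$ being a cardinal of $\mathcal{S}$ (which follows from your hypothesis via elementarity of $\pi^{\mathcal{S},\eta'}_{\mathcal{S},\infty}$ into $\mathcal{M}_\infty(\mathcal{S},\eta')$, and survives the small forcing adding $x,y,z$) --- not with $\lambda$ being a cardinal of $\mathcal{M}_\infty(\mathcal{S},\eta')$ directly. This partial-pullback-plus-reflection device is the missing idea; without it, or some substitute for it, your argument does not go through. Your reduction of (2) to a cofinal map, given (1), is fine, but it inherits the same gap.
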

\begin{proof}
We only give the proof of (1) here because the same argument shows (2) as well.
Suppose that $\nu<\lambda$ and that there is a surjection $f\colon\nu\to\lambda$ in $\mathsf{CDM}^{+}(\mathcal{Q}, \eta)$.
Then there are a formula $\phi$ in the language for $\mathsf{CDM}^+(\mathcal{Q}, \eta)$, an ordinal $\gamma$, $Y\in {}^{\omega}\xi$ for some $\xi<\delta^{\mathcal{Q}, \eta}_{\infty}$, $Z\in\Gamma^*_g$, $x\in\mathbb{R}^*_g$, $\vec{\beta}\in{}^{\omega}\gamma$ such that
\[
f=\{\langle\alpha, \beta\rangle\in\nu\times\lambda\mid(\mathsf{CDM}^+(\mathcal{Q}, \eta)\vert\gamma; {\in}, \vec{\mu})\models\phi[\alpha, \beta, Y, Z, x, \vec{\beta}]\}
\]
We take a genericity iterate $\mathcal{R}$ of $\mathcal{Q}$ above $\eta$ such that $\{\vec{\beta}, \gamma, \nu, \lambda\}\cup\mathrm{ran}(Y)\subset\pi^{\mathcal{R}, \eta}_{\mathcal{R}, \infty}[\delta]$ and a maximal $\mathcal{R}$-generic $k\subset\mathrm{Col}(\omega, {<}\delta)$.
Let $\xi_Y<\delta$ be such that $\mathrm{ran}(Y)\subset\pi^{\mathcal{R}, \eta}_{\mathcal{R}, \infty}[\xi_Y]$.
Let $y\in\mathbb{R}^*_{k}$ code a function $f_y\colon\omega\to\xi_Y$ such that for any $i\in\omega$, $Y(i)=\pi^{\mathcal{R}, \eta}_{\mathcal{R}, \infty}(f_y(i))$.
Also, we may assume that $Z=\mathrm{Code}(\Sigma^g_{\mathcal{P}\vert\xi_{Z}})$ for some $\xi_Z<\delta$.
Let $z\in\mathbb{R}^*_{k}$ code $\pi_{\mathcal{P}, \mathcal{R}}\uphar(\mathcal{P}\vert\xi_Z)$.
Then fix any $\eta'\in[\max\{\eta, \xi_Y, \pi_{\mathcal{P}, \mathcal{R}}(\xi_Z)\}, \delta)$ such that $x, y, z\in\mathcal{R}[k\uphar\eta']$.
Because $\mathsf{CDM}^+(\mathcal{Q}, \eta)=\mathsf{CDM}^+(\mathcal{R}, \eta)^{\mathcal{V}_{\mathcal{R}}[h]}$,
\[
f=\{\langle\alpha, \beta\rangle\in\nu\times\lambda\mid\mathcal{V}_{\mathcal{R}}[x, y, z]\models\phi^*[\alpha, \beta, x, y, z, \eta, \delta, \vec{\beta}, \gamma]\},
\]
where $\phi^*$ is obtained from $\phi$ as in the proof of \cref{club_dichotomy}.

Also, let $\mathcal{S}$ be a genericity iterate of $\mathcal{R}$ above $\eta'$ such that $\nu, \lambda\in\mathrm{ran}(\pi^{\mathcal{S}, \eta'}_{\mathcal{S}, \infty})$.
We can take such an $\mathcal{S}$ because $\delta^{\mathcal{Q}, \eta}_{\infty}=\delta^{\mathcal{S}, \eta'}_{\infty}$ as $(\mathcal{Q}, \eta)$ stabilizes $\delta_{\infty}$.
Let $\nu_{\mathcal{S}}$ and $\lambda_{\mathcal{S}}$ in $\mathcal{S}$ be the preimages of $\nu$ and $\lambda$ under $\pi^{\mathcal{S}, \eta'}_{\mathcal{S}, \infty}$ respectively.
Then $\lambda_{\mathcal{S}}>\eta'$ since otherwise $\lambda=\pi^{\mathcal{S}, \eta'}_{\mathcal{S}, \infty}(\lambda_{\mathcal{S}})=\lambda_{\mathcal{S}}\leq\eta'<\delta$, which contradicts $\lambda\geq\delta$.
Also, because $\lambda$ is a cardinal in $\mathcal{M}_{\infty}(\mathcal{S}, \eta')$ by the assumption on $\lambda$, $\lambda_{\mathcal{S}}$ is a cardinal in $\mathcal{S}$.

In $\mathcal{S}[x, y, z]$, we define a partial function $\overline{f}\colon\nu_{\mathcal{S}}\rightharpoonup\lambda_{\mathcal{S}}$ by
\[
\alpha\in\dom(\overline{f})\land\overline{f}(\alpha)=\beta\iff f(\pi^{\mathcal{S}, \eta'}_{\mathcal{S}, \infty}(\alpha))=\pi^{\mathcal{S}, \eta'}_{\mathcal{S}, \infty}(\beta)
\]
for any $\alpha<\nu_{\mathcal{S}}$ and $\beta<\lambda_{\mathcal{S}}$.
We will show that $\overline{f}$ is surjective, which contradicts the fact that $\lambda_{\mathcal{S}}$ is a cardinal in $\mathcal{S}$.
Now let $\beta^*<\lambda_{\mathcal{S}}$.
Let $\alpha^*<\nu$ be such that
\[
f(\alpha^*)=\pi^{\mathcal{S}, \eta'}_{\mathcal{S}, \infty}(\beta^*).
\]
Take a genericity iterate $\mathcal{W}$ of $\mathcal{S}$ above $\eta'$ such that $\alpha^*\in\mathrm{ran}(\pi^{\mathcal{W}, \eta'}_{\mathcal{W}, \infty})$.
Let $\alpha^*_{\mathcal{W}}$ be the preimage of $\alpha^*$ under $\pi^{\mathcal{W}, \eta'}_{\mathcal{W}, \infty}$ in $\mathcal{W}$.
Then
\[
f(\pi^{\mathcal{W}, \eta'}_{\mathcal{W}, \infty}(\alpha^*_{\mathcal{W}}))=\pi^{\mathcal{W}, \eta'}_{\mathcal{W}, \infty}(\pi^+_{\mathcal{V}_{\mathcal{S}}, \mathcal{V}_{\mathcal{W}}}(\beta^*)),
\]
where $\pi^+_{\mathcal{V}_{\mathcal{S}}, \mathcal{V}_{\mathcal{W}}}\colon\mathcal{V}_{\mathcal{S}}[x, y, z]\to\mathcal{V}_{\mathcal{W}}[x, y, z]$ is the canonical liftup of $\pi_{\mathcal{V}_{\mathcal{S}}, \mathcal{V}_{\mathcal{W}}}$.
Mainly because $\mathsf{CDM}^+(\mathcal{S}, \eta)=\mathsf{CDM}^+(\mathcal{W}, \eta)$, we have $f=\pi^+_{\mathcal{V}_{\mathcal{S}}, \mathcal{V}_{\mathcal{W}}}(f)$ as in the proof of \cref{club_dichotomy}. (Also, see the proof of \cref{determinacy_in_CDM_plus_2}.)
It follows that
\[
(*)\;\;\; \pi^+_{\mathcal{V}_{\mathcal{S}}, \mathcal{V}_{\mathcal{W}}}(f)(\pi^{\mathcal{W}, \eta'}_{\mathcal{W}, \infty}(\alpha^*_{\mathcal{W}}))=\pi^{\mathcal{W}, \eta'}_{\mathcal{W}, \infty}(\pi^+_{\mathcal{V}_{\mathcal{S}}, \mathcal{V}_{\mathcal{W}}}(\beta^*)).
\]
On the other hand, by the definition of $\overline{f}$ and the elementarity of $\pi^+_{\mathcal{V}_{\mathcal{S}}, \mathcal{V}_{\mathcal{W}}}$, we have that for any $\alpha<\nu_{\mathcal{W}}$ and any $\beta<\lambda_{\mathcal{W}}$,
\begin{multline*}
(**)\;\;\; \alpha\in\dom(\pi^+_{\mathcal{V}_{\mathcal{S}}, \mathcal{V}_{\mathcal{W}}}(\overline{f}))\land\pi^+_{\mathcal{V}_{\mathcal{S}}, \mathcal{V}_{\mathcal{W}}}(\overline{f})(\alpha)=\beta\\
\iff \pi^+_{\mathcal{V}_{\mathcal{S}}, \mathcal{V}_{\mathcal{W}}}(f)(\pi^{\mathcal{W}, \eta'}_{\mathcal{W}, \infty}(\alpha))=\pi^{\mathcal{W}, \eta'}_{\mathcal{W}, \infty}(\beta).
\end{multline*}
It follows from $(*)$ and $(**)$ that
\[
\pi^{+}_{\mathcal{V}_{\mathcal{S}}, \mathcal{V}_{\mathcal{W}}}(\overline{f})(\alpha^*_{\mathcal{W}})=\pi^+_{\mathcal{V}_{\mathcal{S}}, \mathcal{V}_{\mathcal{W}}}(\beta^*).
\]
Thus, $\pi^+_{\mathcal{V}_{\mathcal{S}}, \mathcal{V}_{\mathcal{W}}}(\beta^*)\in\mathrm{ran}(\pi^{+}_{\mathcal{V}_{\mathcal{S}}, \mathcal{V}_{\mathcal{W}}}(\overline{f}))$.
By the elementarity of $\pi^{+}_{\mathcal{V}_{\mathcal{S}}, \mathcal{V}_{\mathcal{W}}}$, $\beta^*\in\mathrm{ran}(\overline{f})$.
\end{proof}

\begin{cor}\label{main_thm_3}
$\delta^{\mathcal{Q}, \eta}_{\infty}$ is a cardinal in $\mathsf{CDM}^{+}(\mathcal{Q}, \eta)$.
If $\delta$ is regular in $\mathcal{V}$, then $\delta^{\mathcal{Q}, \eta}_{\infty}$ is a regular cardinal in $\mathsf{CDM}^{+}(\mathcal{Q}, \eta)$.
\end{cor}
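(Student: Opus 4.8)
The plan is to apply the preceding theorem with $\lambda=\delta^{\mathcal{Q},\eta}_{\infty}$, so that the two assertions of the corollary become instances of parts~(1) and~(2) of that theorem, and all that remains is to verify the relevant hypotheses. First note that $\delta\leq\delta^{\mathcal{Q},\eta}_{\infty}$, since $\pi^{\mathcal{Q},\eta}_{\mathcal{Q},\infty}$ is an iteration embedding and so does not decrease ordinals; hence $\delta^{\mathcal{Q},\eta}_{\infty}$ lies in the interval $[\delta,\delta^{\mathcal{Q},\eta}_{\infty}]$ to which the theorem applies.

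Next I would record that $\delta$ is a cardinal of $\mathcal{P}$: any surjection from some $\gamma<\delta$ onto $\delta$ belonging to $\mathcal{P}=\mathcal{V}\vert(\delta^{+})^{\mathcal{V}}$ would also belong to $\mathcal{V}$, contradicting that $\delta$ is a cardinal there, and the same argument applied to cofinal maps shows that $\delta$ is a regular cardinal of $\mathcal{P}$ whenever it is regular in $\mathcal{V}$. Now let $\mathcal{S}$ be any genericity iterate of $\mathcal{Q}$ above $\eta$. Since $\mathcal{S}$ is in particular a $\Sigma$-iterate of $\mathcal{P}$ via a tree based on $\mathcal{P}\vert\delta$, the iteration embedding $\pi_{\mathcal{P},\mathcal{S}}$ fixes $\delta$; by elementarity, $\delta$ is a cardinal of $\mathcal{S}$, and a regular cardinal of $\mathcal{S}$ in the case that $\delta$ is regular in $\mathcal{V}$.

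Finally, fix $\eta'\in[\eta,\delta)$ and such an $\mathcal{S}$. Because $(\mathcal{Q},\eta)$ stabilizes $\delta_{\infty}$, the conclusion of \cref{stabilizing_delta_infty} holds, so in particular $\delta^{\mathcal{Q},\eta}_{\infty}=\delta^{\mathcal{S},\eta'}_{\infty}=\pi^{\mathcal{S},\eta'}_{\mathcal{S},\infty}(\delta)$. Since the direct limit map $\pi^{\mathcal{S},\eta'}_{\mathcal{S},\infty}\colon\mathcal{S}\to\mathcal{M}_{\infty}(\mathcal{S},\eta')$ is elementary, the previous paragraph yields that $\delta^{\mathcal{Q},\eta}_{\infty}$ is a cardinal of $\mathcal{M}_{\infty}(\mathcal{S},\eta')$, and a regular cardinal of $\mathcal{M}_{\infty}(\mathcal{S},\eta')$ when $\delta$ is regular in $\mathcal{V}$. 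As $\eta'$ and $\mathcal{S}$ were arbitrary, the hypothesis of part~(1) of the preceding theorem is verified for $\lambda=\delta^{\mathcal{Q},\eta}_{\infty}$, and that of part~(2) is verified as well in the regular case; the theorem then delivers the two conclusions. I do not anticipate a genuine obstacle, since all of the substance resides in the preceding theorem; the only points requiring care are the bookkeeping observations that the iteration maps producing genericity iterates fix $\delta$ and that the stabilization of $\delta_{\infty}$ by $(\mathcal{Q},\eta)$ gives precisely the identity $\delta^{\mathcal{Q},\eta}_{\infty}=\pi^{\mathcal{S},\eta'}_{\mathcal{S},\infty}(\delta)$.
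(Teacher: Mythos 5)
Your proposal is correct and is exactly the argument the paper leaves implicit: since $(\mathcal{Q},\eta)$ stabilizes $\delta_\infty$, for every $\eta'\in[\eta,\delta)$ and every genericity iterate $\mathcal{S}$ of $\mathcal{Q}$ above $\eta$ we have $\delta^{\mathcal{Q},\eta}_{\infty}=\delta^{\mathcal{S},\eta'}_{\infty}=\pi^{\mathcal{S},\eta'}_{\mathcal{S},\infty}(\delta)$, which is the elementary image of the (regular, when $\delta$ is regular in $\mathcal{V}$) cardinal $\delta$ of $\mathcal{S}$, so the preceding theorem applies with $\lambda=\delta^{\mathcal{Q},\eta}_{\infty}$. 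The bookkeeping points you flag (that $\pi_{\mathcal{P},\mathcal{S}}$ fixes $\delta$ and that stabilization gives the identification of $\delta^{\mathcal{Q},\eta}_{\infty}$ with $\pi^{\mathcal{S},\eta'}_{\mathcal{S},\infty}(\delta)$) are precisely the right ones.
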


While we know that it is possible that $\delta^{\mathcal{Q}, \eta}_{\infty}=\Theta^+$ in $\mathsf{CDM}^+(\mathcal{Q}, \eta)$, we still do not have an answer to the following question.

\begin{ques}\label{ques_on_delta_infty}
Is it consistent that $\delta^{\mathcal{Q}, \eta}_{\infty}>\Theta^+$ in $\mathsf{CDM}^+(\mathcal{Q}, \eta)$?
\end{ques}

We conjecture that some large cardinal assumption on $\delta$ in $\mathcal{V}$ gives an affirmative answer to \cref{ques_on_delta_infty}.
See \cref{conj_on_value_of_delta_infty}.

\end{document}